\newtheorem{thm}{Theorem}[section]
\newtheorem{lem}[thm]{Lemma}
\newtheorem{prop}[thm]{Proposition}
\newtheorem{cor}[thm]{Corollary}
\theoremstyle{definition}
\newtheorem{ex}[thm]{Example}
\newtheorem{defn}[thm]{Definition}
\theoremstyle{remark}
\newtheorem{rmk}[thm]{Remark}
\newcommand{\op}{\mathrm{op}}
\newcommand{\fp}{\mathrm{fp}}
\newcommand{\Coker}{\mathrm{Coker}}
\newcommand{\Ker}{\mathrm{Ker}}
\newcommand{\Img}{\mathrm{Im}}
\newcommand{\Ab}{\mathrm{Ab}}
\newcommand{\End}{\mathrm{End}}
\newcommand{\Ext}{\mathrm{Ext}}
\newcommand{\rmods}{\mathrm{mod}\text{-}}
\newcommand{\lmods}{\text{-}\mathrm{mod}}
\newcommand{\Rmods}{\mathrm{Mod}\text{-}}
\newcommand{\Lmods}{\text{-}\mathrm{Mod}}
\newcommand{\pp}{\mathrm{pp}}
\newcommand{\Hom}{\mathrm{Hom}}
\newcommand{\defeq}{\overset{\mathrm{def}}{=}}
\newcommand{\Ses}{\mathrm{Ses}}
\newcommand{\A}{\mathcal{A}}
\newcommand{\C}{\mathcal{C}}
\newcommand{\D}{\mathcal{D}}
\newcommand{\LD}{\mathrm{L}}
\newcommand{\PP}{\mathcal{P}}
\newcommand{\Hg}{\mathrm{H}}
\newcommand{\Ann}{\mathrm{Ann}}
\newcommand{\U}{\mathcal{U}}
\newcommand{\V}{\mathcal{V}}
\newcommand{\Ell}{\mathcal{L}}
\begin{document}
\title{\textbf{The defect recollement, the MacPherson-Vilonen construction, and pp formulas}}
\author{Samuel Dean}
\date{}
\maketitle
\abstract{For any abelian category $\A$, Auslander constructed a localisation $w:\fp(\A^\op,\Ab)\to \A$ called the defect, which is the left adjoint to the Yoneda embedding $Y:\A\to\fp(\A^\op,\Ab)$. If $\A$ has enough projectives, then this localisation is part of a recollement called the defect recollement. We show that this recollement is an instance of the MacPherson-Vilonen construction if and only if $\A$ is hereditary. We also discuss several subcategories of $\fp(\A^\op,\Ab)$ which arise as canonical features of the defect recollement, and characterise them by properties of their projective presentations and their orthogonality with other subcategories. We apply some parts of the defect recollement to the model theory of modules. Let $R$ be a ring and let $\phi/\psi$ be a pp-pair. When $R$ is an artin algebra, we show that there is a smallest pp formula $\rho$ such that $\psi\leqslant\rho\leqslant\phi$ which agrees with $\phi$ on injectives, and that there is a largest pp formula $\mu$ such that $\psi\leqslant \mu\leqslant \phi$ and $\psi R=\mu R$. When $R$ is left coherent, we show that there is a largest pp formula $\sigma$ such that $\psi\leqslant\sigma\leqslant \phi$ which agrees with $\psi$ on injectives, and that the pp-pair $\psi/\phi$ is isomorphic to a pp formula if and only if $\psi=\sigma$, and that there is a smallest pp formula $\nu$ such that $\psi\leqslant \nu\leqslant\phi$ and $\phi R=\nu R$. We also show that, for any pp-pair $\phi/\psi$, $w(\phi/\psi)\cong (D\psi)R/(D\phi)R$, where $D$ is the elementary duality of pp formulas. We also give expressions for $w(\phi/\psi)$ in terms of free realisation of $\phi$ and $\psi$.}
\tableofcontents
\section{Introduction: The defect recollement}
\emph{Throughout this paper, $\A$ denotes an abelian category with enough projectives.}

A functor $F:\A^\op\to\Ab$ is \textbf{finitely presented} if there is an exact sequence
\begin{displaymath}
\xymatrix{\Hom_\A(-,B)\ar[r]&\Hom_\A(-,C)\ar[r]& F\ar[r]& 0}
\end{displaymath}
for objects $B,C\in\A$. We write $\fp(\A^\op,\Ab)$ for the category of finitely presented functors $\A^\op\to\Ab$.

For categories $\C$ and $\D$, a \textbf{localisation} $G:\C\to\D$ is a functor which preserves finite limits and has a fully faithful right adjoint. It is well-known that for any such functor, if $\C$ is abelian then $\D$ is also abelian and $G$ is a Serre quotient functor which expresses the fact that $\D$ is the Serre quotient $\D=\C/\Ker G$.

Note that the projective objects in $\fp(\A^\op,\Ab)$ are precisely the representable functors. In \cite{auslander1965}, Auslander showed that $\fp(\A^\op,\Ab)$ is an abelian category of global projective dimension $0$ or $2$ with kernels and cokernels computed object-wise. He also showed that the Yoneda embedding $Y:\A\to\fp(\A^\op,\Ab)$ has an exact left adjoint
$w:\fp(\A^\op,\Ab)\to\A$ which we call the \textbf{defect}, following Russell in \cite{russell}. In particular, $w$ is a localisation, and we obtain what is often referred to as Auslander's formula,
\begin{displaymath}
\A\simeq\fp(\A^\op,\Ab)/\fp_0(\A^\op,\Ab),
\end{displaymath}
where $\fp_0(\A^\op,\Ab)=\Ker(w)$. Below is Auslander's description of $\fp_0(\A^\op,\Ab)$.
\begin{thm}\emph{\cite[Proposition 3.2]{auslander1965}}\thlabel{auszero} For any $F\in\fp(\A^\op,\Ab)$, the following are equivalent.
\begin{enumerate}
\item $F\in\fp_0(\A^\op,\Ab)$.
\item Any morphism $g:B\to C\in \A$ which gives a projective presentation 
\begin{displaymath}
\xymatrix{\Hom_\A(-,B)\ar[r]^{g_*}&\Hom_\A(-,C)\ar[r]&F\ar[r]&0}
\end{displaymath}
of $F$ is an epimorphism in $\A$.
\item There is an epimorphism $g:B\to C\in \A$ which gives a projective presentation 
\begin{displaymath}
\xymatrix{\Hom_\A(-,B)\ar[r]^{g_*}&\Hom_\A(-,C)\ar[r]&F\ar[r]&0}
\end{displaymath}
of $F$.
\item $(F,G)=\Ext^1(F,G)=0$ for any left exact functor $G:\A^\op\to\Ab$.
\item $(F,G)=0$ for any left exact functor $G:\A^\op\to\Ab$.
\item $(F,\Hom_\A(-,X))=0$ for any $X\in\A$.
\end{enumerate}
\end{thm}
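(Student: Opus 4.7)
The plan is to organise the six conditions into two small cycles. For $(1)\Leftrightarrow(2)\Leftrightarrow(3)$, I would use only that $w$ is an exact left adjoint to $Y$ and that the counit $wY\to\mathrm{id}_\A$ is an isomorphism (since $Y$ is fully faithful, giving $w(\Hom_\A(-,X))\cong X$). Applying the exact functor $w$ to any presentation $Y(B)\xrightarrow{Y(g)}Y(C)\to F\to 0$ produces the exact sequence $B\xrightarrow{g}C\to w(F)\to 0$, so $w(F)\cong\Coker(g)$. Hence $w(F)=0$ iff every such $g$ is an epimorphism iff some such $g$ is; this settles the first three equivalences simultaneously.

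For the remaining implications, the adjunction together with Yoneda gives a natural isomorphism $\Hom_{\fp(\A^\op,\Ab)}(F,\Hom_\A(-,X))\cong\Hom_\A(w(F),X)$, so the right-hand side vanishes for every $X\in\A$ iff $w(F)=0$ (take $X=w(F)$); this yields $(1)\Leftrightarrow(6)$. The implications $(4)\Rightarrow(5)\Rightarrow(6)$ are immediate, the latter because every representable is left exact by Yoneda's lemma. It remains to establish $(1)\Rightarrow(4)$. For this I would assume (3), pick a presentation with $g:B\to C$ epi, and let $K=\Ker(g)$. Since $\Hom_\A(-,-)$ is left exact in the second variable, we obtain a projective resolution of length two,
\begin{displaymath}
0\to\Hom_\A(-,K)\to\Hom_\A(-,B)\xrightarrow{g_*}\Hom_\A(-,C)\to F\to 0,
\end{displaymath}
consistent with Auslander's bound on global dimension. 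Split this at $H=\Img(g_*)$ into two short exact sequences and apply $\Hom_{\fp(\A^\op,\Ab)}(-,G)$ to each, identifying $\Hom_{\fp(\A^\op,\Ab)}(\Hom_\A(-,X),G)$ with $G(X)$ by Yoneda. Left exactness of $G$ applied to the short exact sequence $0\to K\to B\to C\to 0$ in $\A$ gives an exact sequence $0\to G(C)\to G(B)\to G(K)$, from which a brief diagram chase through $H$ shows that the canonical map $G(C)\to\Hom_{\fp(\A^\op,\Ab)}(H,G)$ is an isomorphism. The two long exact $\Ext$ sequences then force $(F,G)=0$ and $\Ext^1(F,G)=0$.

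The main obstacle is the bookkeeping inside $(1)\Rightarrow(4)$ for $\Ext^1$: one has to identify $\Hom_{\fp(\A^\op,\Ab)}(H,G)$ with $\Ker(G(B)\to G(K))$ from one of the short exact sequences, recognise this kernel as $\Img(G(C)\to G(B))$ using left exactness of $G$, and then check that these two identifications are compatible with the map $G(C)\to\Hom_{\fp(\A^\op,\Ab)}(H,G)$ coming from the other short exact sequence. Everything else is a formal consequence of the adjunction $w\dashv Y$, Yoneda's lemma, and the description of $\fp_0(\A^\op,\Ab)$ as the kernel of the exact localisation $w$.
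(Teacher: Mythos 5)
Your argument is correct, and there is nothing to compare it against inside the paper: the paper states this result as \cite[Proposition 3.2]{auslander1965} without proof, so the only benchmark is Auslander's standard argument, which yours essentially reproduces. The cycle $(1)\Leftrightarrow(2)\Leftrightarrow(3)$ via $wF\cong\Coker(g)$ (right exactness of the left adjoint $w$ plus $wY\cong 1_\A$) is exactly the intended mechanism, and $(1)\Leftrightarrow(6)$ via $(F,YX)\cong\Hom_\A(wF,X)$ is the adjunction argument the paper itself uses implicitly throughout. For $(1)\Rightarrow(4)$ your bookkeeping through $H=\Img(g_*)$ works, but you can avoid the compatibility check you flag: since
\begin{displaymath}
0\to\Hom_\A(-,K)\to\Hom_\A(-,B)\xrightarrow{\;g_*\;}\Hom_\A(-,C)\to F\to 0
\end{displaymath}
is a projective resolution of $F$ by representables, Yoneda identifies the complex $\Hom(P_\bullet,G)$ with $G(C)\to G(B)\to G(K)$, so $(F,G)=\Ker(G(g))$ and $\Ext^1(F,G)=\Ker\bigl(G(B)\to G(K)\bigr)/\Img(G(g))$, and left exactness of $G$ applied to $0\to K\to B\to C\to 0$ kills both at once; the ``diagram chase through $H$'' is just the Yoneda naturality that this direct computation makes unnecessary. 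One convention worth making explicit (as Auslander does) is that $\Ext^n(F,G)$ for $G$ an arbitrary left exact functor is computed from such a representable resolution of $F$, representables being projective over the whole functor category by Yoneda; with that understanding your proof is complete.
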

We note the following corollary which is useful for our case, in which $\A$ has enough projectives.
\begin{cor}\thlabel{wproj}The following hold.
\begin{enumerate}\item For any $F\in\fp(\A^\op,\Ab)$, $F\in\fp_0(\A^\op,\Ab)$ if and only if $FP=0$ for any projective $P\in\A$.
\item For any sequence of morphisms
\begin{displaymath}
\xymatrix{F\ar[r]^{\alpha}&G\ar[r]^{\beta}&H}
\end{displaymath}
in $\fp(\A^\op,\Ab)$ such that $\beta\alpha=0$,
\begin{displaymath}
\xymatrix{wF\ar[r]^{w\alpha}&wG\ar[r]^{w\beta}&wH}
\end{displaymath}
is exact if and only if
\begin{displaymath}
\xymatrix{FP\ar[r]^{\alpha_P}&GP\ar[r]^{\beta_P}&HP}
\end{displaymath}
is exact for any projective $P\in\A$.
\item For any morphism $\alpha:F\to G$ in $\fp(\A^\op,\Ab)$, $w\alpha:wF\to wG$ is an epimorphism if and only if $\alpha_P:FP\to GP$ is an epimorphism for any projective $P\in\A$.
\item For any morphism $\beta:G\to H$ in $\fp(\A^\op,\Ab)$, $w\beta:wG\to wH$ is a monomorphism if and only if $\beta_P:GP\to HP$ is a monomorphism for any projective $P\in\A$.
\end{enumerate}
\end{cor}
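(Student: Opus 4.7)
The plan is to establish part (1) directly from \thref{auszero}, and then derive parts (2)--(4) uniformly by a homological argument combining (1) with the exactness of $w$ and the fact that kernels and cokernels in $\fp(\A^\op,\Ab)$ are computed object-wise. For part (1), the forward direction is immediate from characterisation (3) of \thref{auszero}: if $F\in\fp_0(\A^\op,\Ab)$ is presented by an epimorphism $g:B\to C$, then $g_*:\Hom_\A(P,B)\to \Hom_\A(P,C)$ is surjective for any projective $P$, so its cokernel $FP$ vanishes. For the converse, let $g:B\to C$ be any morphism providing a projective presentation of $F$; since $\A$ has enough projectives, choose an epimorphism $p:P\twoheadrightarrow C$ with $P$ projective, and use $FP=0$ to lift $p$ along $g$, forcing $g$ to be epi. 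Then \thref{auszero}(3) gives $F\in\fp_0(\A^\op,\Ab)$.

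For parts (2)--(4), I would let $H\in\fp(\A^\op,\Ab)$ denote, respectively, the homology $\Ker\beta/\Img\alpha$, the cokernel $\Coker\alpha$, or the kernel $\Ker\beta$. Exactness of $w$ identifies $wH$ with the corresponding construction applied to $w\alpha$ and $w\beta$ in $\A$, so that the left-hand condition in each part amounts to $wH=0$. Similarly, since kernels and cokernels (and therefore homology) in $\fp(\A^\op,\Ab)$ are computed object-wise, $HP$ is the corresponding construction applied to $\alpha_P$ and $\beta_P$, so the right-hand condition in each part amounts to $HP=0$ for every projective $P\in \A$. These two conditions are then equivalent by part (1).

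I do not expect any genuine obstacle. The only substantive step is the converse direction of (1), where the hypothesis that $\A$ has enough projectives is used precisely to lift an epimorphism $P\twoheadrightarrow C$ through an arbitrary projective presentation of $F$ and deduce that its differential is epi; the remaining content of the corollary reduces to formal consequences of \thref{auszero} together with the exactness properties of $w$ and of object-wise evaluation already recorded in the excerpt.
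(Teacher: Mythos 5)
Your proposal is correct and follows essentially the same route as the paper: item (1) is obtained from \thref{auszero} by noting that, since $\A$ has enough projectives, the presenting morphism $g$ is an epimorphism exactly when every $\Hom_\A(P,g)$ with $P$ projective is surjective (your explicit lifting of $P\twoheadrightarrow C$ along $g$ is just this), and items (2)--(4) follow by applying (1) to an auxiliary functor computed both via the exactness of $w$ and via object-wise kernels/cokernels. The only cosmetic difference is that the paper deduces (3) and (4) as the $H=0$ and $F=0$ special cases of (2), whereas you argue them directly with $\Coker\alpha$ and $\Ker\beta$.
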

\begin{proof}
There is a morphism $g:B\to C\in \A$ which gives a projective presentation 
\begin{displaymath}
\xymatrix{\Hom_\A(-,B)\ar[r]^{g_*}&\Hom_\A(-,C)\ar[r]&F\ar[r]&0}
\end{displaymath}
of $F$. Since $\A$ has enough projectives, $g$ is an epimorphism if and only if $\Hom_\A(P,g):\Hom_\A(P,B)\to \Hom_\A(P,B)$ is an epimorphism for any projective $P\in\A$. Therefore, item 1 follows from \thref{auszero}.

Since $w$ is exact, $w(\Ker\beta/\Img\alpha)\cong\Ker(w\beta)/\Img(w\alpha)$, so item 2 follows by applying item 1 to $\Ker\beta/\Img\alpha$. 

Items 3 and 4 follow from item 2 by setting $H=0$ and $F=0$ respectively.
\end{proof}

Now we will discuss how Auslander's localisation can be seen as part of a larger structure known as a recollement.

\begin{defn}\thlabel{recoldef}A \textbf{recollement of abelian categories} is a situation consisting of additive functors 
\begin{displaymath}
\xymatrix{\mathcal{C}'\ar[rr]|{i_*}&&\mathcal{C}\ar@<2.5ex>[ll]^{i^!}\ar@<-2.5ex>[ll]_{i^*}\ar[rr]|{j^*}&&\mathcal{C}''\ar@<2.5ex>[ll]^{j_*}\ar@<-2.5ex>[ll]_{j_!}}\end{displaymath}
between abelian categories $\mathcal{C}'$, $\mathcal{C}$ and $\mathcal{C}''$ such that the following hold:
\begin{list}{$\bullet$}
\item $\Img(i_*)=\Ker (j^*)$.\item $i_*$ is fully faithful and $i^*\dashv i_*\dashv i^!$.
\item $j_!\dashv j^*\dashv j_*$ and $j_!$ and $j_*$ are fully faithful.
\end{list}
If each of the categories $\C'$, $\C$ and $\C''$ appearing in a recollement have enough projectives, then we say the recollement is a \textbf{recollement situation with enough projectives}. In such a case, we follow Franjou and Pirashvili \cite{comprecoll} by saying that the recollement is \textbf{pre-hereditary} if $\LD_2(i^*)(i_*P)=0$ for each projective $P\in\C'$. 
\end{defn}
\begin{defn}For a ring $R$, we write $R\Lmods$ for the category of left $R$-modules, $\Rmods R$ for the category of right $R$-modules, $R\lmods$ for the category of finitely presented left $R$-modules, and $\rmods R$ for the category of finitely presented right $R$-modules.
\end{defn}
\begin{ex}Let $R$ be an artin algebra and let $M$ be a right $R$-module. The abelian group $\Hom_R(M,R)$ is a left $R$-module with $(rh)x=r(hx)$ for any $h\in \Hom_R(M,R)$, $r\in R$ and $x\in M$. Consider the morphism  
\begin{displaymath}
f:M\otimes_R \Hom_R(M,R)\to R:h\otimes_R x\mapsto hx.
\end{displaymath}
Its image
\begin{displaymath}
\Img f=\sum_{h\in\Hom_R(M,R)}\Img h
\end{displaymath}
is a two-sided ideal of $R$, called the \textbf{trace ideal of $M$}.

Using the notation of \thref{recoldef}, Geigle and Lenzing show at \cite[Proposition 5.3]{gl} that the only recollements of abelian categories with $\C=\rmods R$ are those of the form 
\begin{displaymath}
\xymatrix{\rmods R/\mathfrak{a}\ar[rr]|{i_*}&&\rmods R\ar@<2.5ex>[ll]^{i^!}\ar@<-2.5ex>[ll]_{i^*}\ar[rr]|{j^*}&&\rmods \End_R(P)\ar@<2.5ex>[ll]^{j_*}\ar@<-2.5ex>[ll]_{j_!}}\end{displaymath}
where 
\begin{align*}
j_*&=\Hom_R(P,-)|_{\rmods R}\cong -\otimes_R \Hom_R(P,R)|_{\rmods R}
\\j_!&=-\otimes_{\End_R(P)}P|_{\rmods \End_R(P)}
\\j_*&=\Hom_{\End_R(P)}(\Hom_R(P,R),-)|_{\rmods \End_R(P)}
\end{align*}
and $\mathfrak{a}$ is the trace ideal of $P$, for some finitely generated projective right $R$-module $P$. The fully faithful functor $i_*$ is just the inclusion of $\rmods R/\mathfrak{a}$ as the full subcategory of $\rmods R$ consisting of those finitely generated right $R$-modules $M$ such that $M\mathfrak{a}=0$. Geigle and Lenzing also show that every localising subcategory is of this form for some finitely generated projective $P$, and is also colocalising, thus giving rise to a recollement of this form.
\end{ex}

In \cite{dean3}, the author and Russell have shown that, if $\A$ has enough projectives, then there is a recollement
\begin{displaymath}
\xymatrix{\fp_0(\A^\op,\Ab)\ar[rr]|{\subseteq}&&\fp(\A^\op,\Ab)\ar@<2.5ex>[ll]^{(-)_0}\ar@<-2.5ex>[ll]_{(-)^0}\ar[rr]|{w}&&\mathcal{A}\ar@<2.5ex>[ll]^{Y}\ar@<-2.5ex>[ll]_{\LD_0(Y)}}
\end{displaymath}
where $Y$ is the Yoneda embedding and $\LD_0(Y)$ is the unique right exact functor which agrees with $Y$ on projective objects, known as the \textbf{zeroeth left derived functor of $Y$}. We refer to this as the \textbf{defect recollement}.

The following lemma, along with its dual, gives a recipe for recovering the left hand side of a recollement of abelian categories from the right hand side.
\begin{lem}\thlabel{counitstuff}\emph{\cite[Section 4.2]{comprecoll}} In the notation of \thref{recoldef}, for any recollement of abelian categories, there is an exact sequence of functors
\begin{displaymath}
\xymatrix{j_!j^*\ar[r]^\zeta&1_\C\ar[r]&i_*i^*\ar[r]&0,}
\end{displaymath}
where $\zeta:j_!j^*\to 1_\C$ is the counit of the adjunction $j_!\dashv j^*$ and $i^*j_!=0$.

For any recollement situation with enough projectives, $\LD_1(i^*)j_!=0$ and if $i^*C=0$ for an object $C\in\C$ then there is an isomorphism
\begin{displaymath}
\Ker(\zeta_C)\cong i_*\LD_1(i^*)C
\end{displaymath}
which is natural in $C$.
\end{lem}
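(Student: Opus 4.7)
The plan is to verify each assertion in turn using standard adjunction gymnastics, relying on $\Img(i_*)=\Ker(j^*)$ (so $j^*i_*=0$), the triangle identities, and the fact that $j_!$ and $i_*$ are fully faithful (hence the corresponding units and counits are isomorphisms).

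For exactness of $j_!j^*\xrightarrow{\zeta}1_\C\to i_*i^*\to 0$: first $i^*j_!=0$ by the adjunction computation $\Hom(i^*j_!X,Y)\cong\Hom(j_!X,i_*Y)\cong\Hom(X,j^*i_*Y)=0$. Set $Q=\Coker(\zeta_C)$. Applying the exact functor $j^*$ and using that the unit $\eta:1\to j^*j_!$ of $j_!\dashv j^*$ is an iso (since $j_!$ is fully faithful) together with the triangle identity $(j^*\zeta)(\eta j^*)=1_{j^*}$, we see $j^*\zeta_C$ is an iso, so $j^*Q=0$ and $Q\cong i_*X$ for some $X\in\C'$. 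Applying the right exact $i^*$ and using $i^*j_!=0$, we get $i^*C\cong i^*Q=X$. Since $\Hom(j_!j^*C,i_*i^*C)=0$, the unit $C\to i_*i^*C$ factors uniquely through $Q$, and the induced map $Q\to i_*i^*C$ is $i_*$ applied to the isomorphism $X\cong i^*C$, hence itself an isomorphism.

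Next, $\LD_1(i^*)j_!=0$: since $j^*$ is exact (it has both adjoints), its left adjoint $j_!$ preserves projectives. For $Y\in\C''$ pick a projective presentation $P_1\xrightarrow{f}P_0\to Y\to 0$ and apply the right exact $j_!$ to get $j_!P_1\xrightarrow{j_!f}j_!P_0\to j_!Y\to 0$ with $j_!P_0, j_!P_1$ projective in $\C$. Let $A=\Img(j_!f)=\Ker(j_!P_0\to j_!Y)$. The long exact sequence for $\LD_*(i^*)$ applied to $0\to A\to j_!P_0\to j_!Y\to 0$, combined with $\LD_1(i^*)(j_!P_0)=0$ and $i^*j_!P_0=0$, gives $\LD_1(i^*)(j_!Y)\cong i^*A$. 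But $j_!P_1\twoheadrightarrow A$ is epi and $i^*$ is right exact, so $i^*A$ is a quotient of $i^*j_!P_1=0$; thus $\LD_1(i^*)(j_!Y)=0$.

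Finally, assume $i^*C=0$, so the exact sequence from paragraph one reduces to a short exact sequence $0\to N\to j_!j^*C\xrightarrow{\zeta_C}C\to 0$ with $N=\Ker(\zeta_C)$. Applying $j^*$ and using that $j^*\zeta_C$ is an iso (paragraph one) gives $j^*N=0$, so $N\cong i_*M$ for some $M\in\C'$. The long exact sequence for $\LD_*(i^*)$, together with the vanishings $\LD_1(i^*)(j_!j^*C)=0$ (paragraph two), $i^*j_!j^*C=0$, and $i^*C=0$, collapses to $\LD_1(i^*)(C)\cong i^*N=i^*i_*M\cong M$, whence $N\cong i_*\LD_1(i^*)(C)$. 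Naturality in $C$ follows from functoriality of every piece involved. I expect the main technical step to be $\LD_1(i^*)j_!=0$: it requires combining projective preservation by $j_!$ with the observation that the first syzygy of $j_!Y$ arises as a quotient of an object in the image of $j_!$, so that right exactness of $i^*$ together with $i^*j_!=0$ finishes the job.
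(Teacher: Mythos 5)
Your argument is correct and follows essentially the same route as the paper: the same vanishing $\Hom_\C(j_!j^*C,i_*C')=0$ together with $\Img(i_*)=\Ker(j^*)$ and full faithfulness of $i_*$ identifies $\Coker(\zeta)$ with $i_*i^*$ (the paper packages this as uniqueness of adjoints for an auxiliary functor $i^?$, you verify it pointwise via the unit $C\to i_*i^*C$), and the remaining parts — $j_!$ preserves projectives since $j^*$ is exact, the syzygy of $j_!Y$ is a quotient of an object in $\Img(j_!)$ so $i^*$ kills it, and the long exact sequence applied to $0\to\Ker(\zeta_C)\to j_!j^*C\to C\to 0$ — coincide with the paper's proof. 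The only step you assert rather than check, namely that the induced map $\Coker(\zeta_C)\to i_*i^*C$ is $i_*$ of an isomorphism, follows by applying $i^*$ and using the triangle identity for $i^*\dashv i_*$ with full faithfulness of $i_*$, in the same spirit as the rest of your argument.
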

\begin{proof}
Since $j_!$ is fully faithful, $j^*\zeta$ is an isomorphism, and therefore $j^*\Coker(\zeta)=0$. Since $j^*\Coker(\zeta)=0$, $\Img(i_*)=\Ker(j^*)$ and $i_*$ is fully faithful, there is some functor $i^?:\C\to\C'$ such that $\Coker(\zeta)\cong i_*i^?$, so there is an exact sequence
\begin{displaymath}
\xymatrix{j_!j^*\ar[r]^\zeta&1_\C\ar[r]&i_*i^?\ar[r]&0.}
\end{displaymath}
We will show that $i^?\cong i^*$. For any $C\in \C$ and $C'\in\C'$, the induced sequence
\begin{displaymath}
\xymatrix{0\ar[r]&\Hom_{\C}(i_*i^?C,i_*C')\ar[r]&\Hom_{\C}(C,i_*C')\ar[r]&\Hom_\C(j_!j^*C,i_*C')}
\end{displaymath}
is exact, but $\Hom_\C(j_!j^*C,i_*C')\cong \Hom_\C(j^*C,j^*i_*C')=0$ so there is an isomorphism
\begin{displaymath}
\Hom_{\C'}(i^?C,C')\cong \Hom_\C(i_*i^?C,i_*C')\cong \Hom_\C(C,i_*C')
\end{displaymath}
which is natural in $C$ and $C'$. Therefore, $i^?\dashv i_*$, so $i^?\cong i^*$, as required. Since $i^*j_!\dashv j^*i_*=0$, $i^*j_!=0$.

Suppose the recollement situation has enough projectives. We need to prove that $\LD_1(i^*)j_!=0$. Let $C''\in\C''$ be given, along with an epimorphism $P\to C''$ where $P$ is projective. Since $j_!$ is right exact and preserves projectives (because it has an exact right adjoint), there is an epimorphism $j_!\Omega C''\to \Omega j_!C''$ and therefore $i^*\Omega j_!C''=0$. The exact sequence
\begin{displaymath}
\xymatrix{0\ar[r]&\Omega j_!C''\ar[r]&j_!P\ar[r]&j_!C''\ar[r]&0}
\end{displaymath}
induces an exact sequence
\begin{displaymath}
\xymatrix{\LD_1(i^*)j_!P\ar[r]&\LD_1(i^*)j_!C''\ar[r]&i^*\Omega j_!C''.}
\end{displaymath}
Since $j_!P$ is projective, $\LD_1(i^*)j_!P=0$. Also, as shown above $i^*\Omega j_!C''=0$. Therefore $\LD_1(i^*)j_!C''=0$, as required.

Suppose $i^*C=0$. The exact sequence
\begin{displaymath}
\xymatrix{0\ar[r]&\Ker(\zeta_C)\ar[r]&j_!j^*C\ar[r]^\zeta&C\ar[r]&0}
\end{displaymath}
induces another exact sequence
\begin{displaymath}
\xymatrix{\LD_1(i^*)j_!j^*C\ar[r]&\LD_1(i^*)C\ar[r]&i^*\Ker(\zeta_C)\ar[r]&i^*j_!j^*C\ar[r]^\zeta&i^*C\ar[r]&0}
\end{displaymath}
which is natural in $C$.

Since $\LD_1(i^*)j_!j^*C=0$ and $i^*j_!j^*C=0$, there is an isomorphism $\LD_1(i^*)C\cong i^*\Ker(\zeta_C)$ which is natural in $C$. Now, $j^*\zeta$ is an isomorphism because $j_!$ is fully faithful, and therefore $j^*\Ker(\zeta_C)=0$, so by the first part, there are isomorphisms $i_*\LD_1(i^*)C\cong i_*i^*\Ker(\zeta_C)\cong\Ker(\zeta_C)$ which are natural in $C$.
\end{proof}

For any $F\in\fp(\A^\op,\Ab)$, by the dual of \thref{counitstuff}, $F_0$ can be constructed as the kernel of the component $\eta_F:F\to YwF$ of the unit $\eta$ of the adjunction $w\dashv Y$. Since $Y$ is fully faithful, $w\eta_F$ is an isomorphism. Therefore, by \thref{wproj}, $(\eta_F)_P:FP\to \Hom_\A(P,wF)$ is an isomorphism for any projective $P\in\A$, so $F_0P=0$.

Let $A\in\A$ and an epimorphism $\pi:P\to A$ be given, where $P$ is projective, and let $F\in\fp(\A^\op,\Ab$) be given. There is a commutative square
\begin{displaymath}
\xymatrix{FA\ar[d]_{F\pi}\ar[r]&\Hom_\A(A,wF)\ar[d]^{\Hom_\A(\pi,wF)}
\\
FP\ar[r]&\Hom_\A(P,wF),}
\end{displaymath}
where the top and bottom sides are given by $(\eta_F)_A$ and $(\eta_F)_P$ respectively. Since the bottom side is an isomorphism and the right hand side is a monomorphism, $F_0A$ is the kernel of $F\pi$. That is,
\begin{displaymath}
F_0A=\{x\in FA:(F\pi)x=0\}.
\end{displaymath}
Therefore, $F_0$ is the subfunctor of $F$ which consists of elements of $F$ which are annihilated by those morphisms which factor through a projective. Also, $F_0$ is the largest subfunctor of $F$ which is in $\fp_0(\A^\op,\Ab)$.

The functor $(-)^0$ is right exact, so is determined by its action on representable functors, which will be discussed in Section \ref{mv}. By \thref{counitstuff}, for any $F\in\fp(\A^\op,\Ab)$, $F^0$ can be constructed as the cokernel of the component $\LD_0(Y)(wF)\to F$ of the counit of the adjunction $\LD_0(Y)\dashv w$.

For the purposes of doing computations with the defect recollement, it is harmless to simplify matters by setting
\begin{align*}
w\Hom_\A(-,A)&=A
\\\text{and }\LD_0(Y)(P)&=\Hom_\A(-,P)
\end{align*}
for any $A\in\A$ and any projective $P\in\A$. When we choose to do this, the components $w\Hom_\A(-,A)\to A$ and $\LD_0(Y)w\Hom_\A(-,P)\to\Hom_\A(-,P)$ of the respective counits can be taken to be identities.

The MacPherson-Vilonen construction of abelian recollements, given below, first appeared in \cite[Section 1]{macvil} in order to simplify the definition of the category of perverse sheaves over a stratified space. In \cite[Theorem 8.7]{comprecoll} (see \thref{mvtest} below) Franjou and Pirashvili give necessary and sufficient conditions for a recollement with enough situation with enough projectives to be given by this construction.
\begin{defn}Let $\C'$ and $\C''$ be abelian categories, let $F:\C''\to\C'$ be a right exact functor, let $G:\C''\to\C'$ be a left exact functor, and let $\alpha:F\to G$ be a natural transformation. The \textbf{MacPherson-Vilonen construction} for $\alpha$ is recollement of abelian categories 
\begin{displaymath}
\xymatrix{\mathcal{C}'\ar[rr]|{i_*}&&\mathcal{C}(\alpha)\ar@<2.5ex>[ll]^{i^!}\ar@<-2.5ex>[ll]_{i^*}\ar[rr]|{j^*}&&\mathcal{C}''\ar@<2.5ex>[ll]^{j_*}\ar@<-2.5ex>[ll]_{j_!}}
\end{displaymath}
given by the following data.
\begin{itemize}
\item The abelian category $\C(\alpha)$, which has objects and morphisms as follows.
\begin{itemize}
\item Objects $(X,V,g,f)$ given by an object $X\in\C''$, an object $V\in\C'$ and morphisms
\begin{displaymath}
\xymatrix{FX\ar[r]^{f}&V\ar[r]^{g}&GX}
\end{displaymath}
such that $gf=\alpha_X$.
\item Morphisms $(x,v):(X,V,g,f)\to(X',V',g',f')$ given by morphisms $x:X\to X'\in\C''$ and $v:V\to V'\in\C'$ such that the diagram
\begin{displaymath}
\xymatrix{FX\ar[d]_{Fx}\ar[r]^{f}&V\ar[d]^v\ar[r]^{g}&GX\ar[d]^{Gx}
\\FX'\ar[r]^{f'}&V'\ar[r]^{g'}&GX'}
\end{displaymath}
commutes.
\end{itemize}
\item The functors $j^*$, $i^*$ and $i^!$ are defined by 
\begin{align*}
j^*(X,V,g,f)&=X
\\i^*(X,V,g,f)&=\Coker f
\\i^!(X,V,g,f)&=\Ker g
\end{align*}
for each $(X,V,g,f)\in\C(\alpha)$.
\item The functor $i_*$ is defined by $i_*V=(0,V,0,0)$ for each $V\in\C'$.
\item The functors $j_!$ and $j_*$ are defined by
\begin{align*}
j_!X&=(X,FX,\alpha_X,1_{FX})\\
j_*X&=(X,GX,1_{GX},\alpha_X)
\end{align*}
for each $X\in\C''$.
\end{itemize}
\end{defn}
\begin{defn}
Let
\begin{displaymath}
\xymatrix{\mathcal{C}'\ar[rr]|{i_*}&&\mathcal{C}\ar@<2.5ex>[ll]^{i^!}\ar@<-2.5ex>[ll]_{i^*}\ar[rr]|{j^*}&&\mathcal{C}''\ar@<2.5ex>[ll]^{j_*}\ar@<-2.5ex>[ll]_{j_!}}
\end{displaymath}
be a recollement of abelian categories. Let $\epsilon:j^*j_*\to 1_{\C''}$ be the counit of the adjunction $j^*\dashv j_*$ and let $\zeta:j_!j^*\to 1_\C$ be the counit of the adjunction $j_!\dashv j^*$. Note that $\epsilon$ is an isomorphism. The \textbf{norm} of the recollement is the natural transformation $N:j_!\to j_*$ given by $N=(\zeta j_*)(j_!\epsilon^{-1})$.
\end{defn}
In the defect recollement, the counit $\epsilon:wY\to 1_\A$ is taken to be the identity, and, at each projective $P\in\A$, the component $\zeta_{\Hom_\A(-,P)}:\LD_0(Y)(P)\to \Hom_\A(-,P)$ of the counit $\zeta:\LD_0(Y)w\to 1_{\fp(\A^\op,\Ab)}$ is also taken to the identity. Therefore, the norm of the defect recollement is the unique natural transformation $N=\zeta Y:\LD_0(Y)\to Y$ which is the identity on projectives. 

In Section \ref{mv} we will apply the following theorem to prove that the defect recollement is an instance of the MacPherson-Vilonen construction if and only if each object of $\A$ has projective dimension at most one.
\begin{thm}\thlabel{mvtest}\emph{\cite[Theorem 8.7]{comprecoll}} A recollement situation with enough projectives,
\begin{displaymath}
\xymatrix{\mathcal{C}'\ar[rr]|{i_*}&&\mathcal{C}\ar@<2.5ex>[ll]^{i^!}\ar@<-2.5ex>[ll]_{i^*}\ar[rr]|{j^*}&&\mathcal{C}'',\ar@<2.5ex>[ll]^{j_*}\ar@<-2.5ex>[ll]_{j_!}}
\end{displaymath}
is an instance of the MacPherson-Vilonen construction if and only if it is pre-hereditary and there is an exact functor $r:\C\to\C'$ such that $ri_*\cong 1_{\C'}$. In particular, if the recollement is pre-hereditary and admits such an exact functor $r$, then it is the MacPherson-Vilonen construction for the natural transformation $rN$, where $N$ is its norm.
\end{thm}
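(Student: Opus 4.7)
The plan is to prove both directions of the characterisation by exhibiting the relevant data explicitly, working at the level of the units and counits of the recollement. For the ``only if'' direction, assume $\C=\C(\alpha)$ for some $\alpha\colon F\to G$. I would define $r\colon \C(\alpha)\to\C'$ by $r(X,V,g,f)=V$; a direct inspection shows that kernels and cokernels in $\C(\alpha)$ are computed component-wise in the $V$-slot, so $r$ is exact, and $ri_*\cong 1_{\C'}$ is immediate from $i_*V=(0,V,0,0)$. A short Yoneda calculation gives $\Hom_{\C(\alpha)}(i_*V,(X,W,g,f))\cong \Hom_{\C'}(V,W)$, so $i_*P$ is projective in $\C(\alpha)$ whenever $P$ is projective in $\C'$, whence $\LD_n(i^*)(i_*P)=0$ for all $n\geqslant 1$ and pre-heredity follows.

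For the ``if'' direction, assume the recollement is pre-hereditary and admits an exact $r\colon \C\to\C'$ with $ri_*\cong 1_{\C'}$. Since $r$ is exact, $rj_!$ is right exact and $rj_*$ is left exact, and the norm induces $rN\colon rj_!\to rj_*$. I would define a comparison functor $\Phi\colon \C\to\C(rN)$ by
\[ \Phi(C)=(j^*C,\,rC,\,r\eta_C,\,r\zeta_C), \]
where $\eta\colon 1_\C\to j_*j^*$ and $\zeta\colon j_!j^*\to 1_\C$ are the unit and counit of $j^*\dashv j_*$ and $j_!\dashv j^*$, respectively. The required identity $(r\eta_C)(r\zeta_C)=(rN)_{j^*C}$ falls out of applying $r$ to the naturality square of $\zeta$ at $\eta_C$, together with the triangle identity $j^*\eta_C=\epsilon^{-1}_{j^*C}$ (valid because $j_*$ being fully faithful makes $\epsilon$ invertible). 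The checks that $\Phi$ intertwines the six recollement functors --- $j^*_{MV}\Phi=j^*$, $i^*_{MV}\Phi\cong i^*$ via $\Coker(r\zeta_C)=r(i_*i^*C)\cong i^*C$, and $\Phi j_!\cong j_!^{MV}$, $\Phi j_*\cong j_*^{MV}$, $\Phi i_*\cong i_*^{MV}$ --- are routine unwindings that use only the fully-faithfulness of $j_!$, $j_*$, and $i_*$.

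It remains to promote $\Phi$ to an equivalence. For essential surjectivity, I would reconstruct an object of $\C$ from given MV-data $(X,V,g,f)$ by mimicking the canonical four-term exact sequence $0\to i_*\LD_1(i^*)C\to j_!j^*C\to C\to i_*i^*C\to 0$ supplied by (the dual of) \thref{counitstuff}: glue $j_!X$ and $i_*V$ along the morphisms coming from $f$ and $g$, and extract the correct middle extension. Fully faithfulness then follows from a five-lemma comparison of Hom groups applied to this canonical exact sequence on both source and target. The main obstacle is essential surjectivity, and this is precisely where pre-heredity enters: it guarantees that the kernel of the counit $j_!j^*C\to C$ sits inside the essential image of $i_*$ as $i_*\LD_1(i^*)C$ with no higher correction terms, so that the gluing procedure assembling $C$ from $(X,V,g,f)$ is well-defined and unique up to isomorphism.
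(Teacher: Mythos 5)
The paper offers no proof of this statement -- it is imported verbatim from Franjou--Pirashvili \cite[Theorem 8.7]{comprecoll} -- so I can only assess your argument on its own terms, and it has two genuine gaps. In the ``only if'' direction, the exactness of the middle-slot projection $r(X,V,g,f)=V$ and the identity $ri_*\cong 1_{\C'}$ are correct, but your Hom computation is not: a morphism $(0,V,0,0)\to(X,W,g,f)$ in $\C(\alpha)$ is a map $v\colon V\to W$ with $gv=0$, so $\Hom_{\C(\alpha)}(i_*V,(X,W,g,f))\cong\Hom_{\C'}(V,\Ker g)=\Hom_{\C'}(V,i^!(X,W,g,f))$, which is just the adjunction $i_*\dashv i^!$, not $\Hom_{\C'}(V,W)$. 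Consequently $i_*P$ is in general \emph{not} projective in $\C(\alpha)$: take $\C'=\C''=\Ab$, $F=0$, $G=1_{\Ab}$, $\alpha=0$, so that objects of $\C(\alpha)$ are arrows $g\colon V\to X$ and $i_*P=(P\to 0)$; every morphism $i_*\mathbb{Z}\to(\mathbb{Z}\overset{1}{\to}\mathbb{Z})$ is zero, yet $\Hom(i_*\mathbb{Z},(\mathbb{Z}\to\mathbb{Z}/2))\cong\Hom(\mathbb{Z},2\mathbb{Z})\neq 0$, so a nonzero map cannot be lifted along the componentwise epimorphism $(\mathbb{Z}\overset{1}{\to}\mathbb{Z})\to(\mathbb{Z}\to\mathbb{Z}/2)$. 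So your route to pre-heredity collapses; proving $\LD_2(i^*)(i_*P)=0$ for the MacPherson--Vilonen construction requires identifying the actual projectives of $\C(\alpha)$ and computing with resolutions, which is a substantive part of Franjou--Pirashvili's proof rather than a formality.

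In the ``if'' direction, the comparison functor $\Phi(C)=(j^*C,rC,r\eta_C,r\zeta_C)$ is the right one, and $(r\eta_C)(r\zeta_C)=(rN)_{j^*C}$ does follow from $Nj^*=\eta\zeta$ (the self-duality remark in Section 4). But everything after that -- that $\Phi$ is an equivalence -- is the actual content of the theorem, and your sketch both omits it and misplaces the role of pre-heredity: the isomorphism $\Ker(\zeta_C)\cong i_*\LD_1(i^*)C$ of \thref{counitstuff} is only proved, and is only true, under the hypothesis $i^*C=0$, and it holds in every recollement situation with enough projectives whether or not it is pre-hereditary, so it cannot be ``precisely where pre-heredity enters.'' What pre-heredity must do is kill the obstruction, living in $\LD_2(i^*)$ evaluated on $i_*$ of projectives, to realising an arbitrary quadruple $(X,V,g,f)$ as $\Phi(C)$ and to proving fullness; ``glue $j_!X$ and $i_*V$ along $f$ and $g$ and extract the correct middle extension'' is not a construction, and no argument is given that such a $C$ exists or is unique. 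As it stands the proposal establishes the formal ingredients (exactness of the slot projection, $ri_*\cong 1_{\C'}$, well-definedness of $\Phi$ and its compatibility with the six functors) but neither implication in full.
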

\section{When is the defect recollement MacPherson-Vilonen?}\label{mv}
\begin{defn}We say that $\A$ is \textbf{hereditary} if each of its objects have projective dimension at most one.
\end{defn}
In \cite{dean3}, it was observed that the defect recollement is pre-hereditary if and only if $\A$ is hereditary. The purpose of this section is to show that the defect recollement is MacPherson-Vilonen if and only if $\A$ is hereditary. Before doing that, we need to summarise some background.

For any $A,B\in\A$, we will write $\PP(A,B)$ for the subgroup of $\Hom_\A(A,B)$ consisting of the morphisms $A\to B$ which factor through a projective object, and we will write $\underline{\Hom}_\A(A,B)=\Hom_\A(A,B)/\PP(A,B)$. We will write $\underline{\A}$ for the additive category which has the same objects as $\A$ and has $\Hom_{\underline{\A}}(A,B)=\underline{\Hom}_\A(A,B)$, for each $A,B\in\A$. The category $\underline{\A}$ is a well-known construction known as the \textbf{projective stabilisation} of $\A$. Clearly there is a canonical full functor $\A\to\underline{\A}$. For any object $A\in\A$ we write $\underline{A}$ for its image in $\underline{\A}$. Similarly, we write $\underline{f}:\underline{A}\to\underline{B}$ for the image of a morphism $f:A\to B\in\A$ in $\underline{\A}$.

Let $A\in\A$ be given. We note that $\PP(-,A)\in\fp(\A^\op,\Ab)$ because, for any projective $P\in\A$ and any epimorphism $\pi:P\to A$, the functor $\PP(-,A)$ is the image of the induced morphism $\pi_*:\Hom_\A(-,P)\to\Hom_\A(-,A)$. We also have $\underline{\Hom}_\A(-,A)\in\fp(\A^\op,\Ab)$ since it is the cokernel of the inclusion $\PP(-,A)\to\Hom_\A(-,A)$.

We will make use of the bilinear functor $\Ext^n_\A:\A^\op\times\A\to\Ab$ for integers $n\geqslant 0$. Since we do not assume that $\A$ has enough injectives, it is worth discussing the basic properties of this functor very briefly. For any $A,B\in\A$, $\Ext^n_\A(A,B)$ is defined as $\Hg^n\Hom_\A(P_\cdot,B)$, where $P_\cdot\in\mathrm{K}_{\geqslant 0}(\A)$ is a projective resolution of $A$. Any short exact sequence in $\A$ gives rise to a long exact sequence as usual, and does so in either variable. Since we do assume that $\A$ has enough projectives, $\Ext^n(A,-)\in\fp(\A,\Ab)$ for any $A\in\A$ and any $n\geqslant 0$.

Let $\Ses(\A)$ denote the category of short exact sequences in $\A$, where morphisms are given by homotopy classes of chain maps. Auslander's characterisation (see \thref{auszero}) of $\fp_0(\A^\op,\Ab)$ shows that $\fp_0(\A^\op,\Ab)\simeq\Ses(\A)$, where a short exact sequence
\begin{displaymath}
\xymatrix{0\ar[r]&A\ar[r]^f&B\ar[r]^g\ar[r]&C\ar[r]&0}
\end{displaymath}
corresponds to the finitely presented functor $G$ with projective presentation 
\begin{displaymath}
\xymatrix{0\ar[r]&\Hom_\A(-,A)\ar[r]^{f_*}&\Hom_\A(-,B)\ar[r]^{g_*}&\Hom_\A(-,C)\ar[r]&G\ar[r]&0.}
\end{displaymath}
Since it is obvious that $(\Ses(\A^\op))^\op\simeq \Ses(\A)$, we must have an equivalence
$$(\fp_0(\A,\Ab))^\op\simeq\fp_0(\A^\op,\Ab).$$
We can describe this equivalence more concretely, as follows.
In \cite{dean3}, an equivalence
\begin{displaymath}
W:(\fp_0(\A,\Ab))^\op\to\fp_0(\A^\op,\Ab)
\end{displaymath}
is constructed such that, for each object $A\in\A$, there is an isomorphism 
\begin{displaymath}W\Ext^1_\A(A,-)\cong \underline{\Hom}_\A(-,A)
\end{displaymath}
which is natural in $A$. It is formally defined by $(WF)A=\Ext^2(F,\Hom_\A(A,-))$ for any $F\in\fp_0(\A^\op,\Ab)$ and $A\in\A$. One can easily show that if 
\begin{displaymath}
\xymatrix{0\ar[r]&\Hom_\A(C,-)\ar[r]^{g^*}&\Hom_\A(B,-)\ar[r]^{f^*}&\Hom_\A(A,-)\ar[r]&F\ar[r]&0}
\end{displaymath}
is a projective presentation for $F\in\fp_0(\A,\Ab)$ then
\begin{displaymath}
\xymatrix{0\ar[r]&\Hom_\A(-,A)\ar[r]^{f_*}&\Hom_\A(-,B)\ar[r]^{g_*}&\Hom_\A(-,C)\ar[r]&WF\ar[r]&0}
\end{displaymath} 
is a projective presentation of $WF$, and it is clear from this that $W$ is the equivalence $(\fp_0(\A,\Ab))^\op\simeq \fp_0(\A^\op,\Ab)$ which corresponds to the obvious equivalence $(\Ses(\A^\op))^\op\simeq\Ses(\A)$.

\begin{lem}\thlabel{0hom}\emph{\cite[Lemma 13]{dean3}} For any $A\in\A$, there is an isomorphism $(\Hom_\A(-,A))^0=\underline{\Hom}_\A(-,A)$ which is natural in $A$.
\end{lem}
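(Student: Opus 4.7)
The plan is to characterise $(\Hom_\A(-,A))^0$ via its universal property as the value at $\Hom_\A(-,A)$ of the left adjoint to the inclusion $\fp_0(\A^\op,\Ab)\hookrightarrow\fp(\A^\op,\Ab)$, and then verify that the canonical quotient $q_A\colon\Hom_\A(-,A)\to\underline{\Hom}_\A(-,A)$ realises this universal arrow.

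First I would check that $\underline{\Hom}_\A(-,A)\in\fp_0(\A^\op,\Ab)$: by \thref{wproj} it suffices to prove $\underline{\Hom}_\A(P,A)=0$ for every projective $P\in\A$, and this is immediate since every morphism $P\to A$ factors through the projective $P$ itself, so $\PP(P,A)=\Hom_\A(P,A)$. Next, for any $G\in\fp_0(\A^\op,\Ab)$ and any $\phi\colon\Hom_\A(-,A)\to G$, I would show that $\phi$ vanishes on $\PP(-,A)$ and hence factors uniquely through $q_A$. If $f\colon X\to A$ factors as $X\xrightarrow{h}Q\xrightarrow{k}A$ with $Q$ projective, then $\phi\circ k_*\colon\Hom_\A(-,Q)\to G$ corresponds under Yoneda to an element of $GQ$, which is zero by \thref{wproj}; evaluating at $h$ yields $\phi_X(f)=0$. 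Since $q_A$ is an epimorphism the factorisation is unique, so the universal property holds and we obtain $(\Hom_\A(-,A))^0\cong\underline{\Hom}_\A(-,A)$.

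Naturality in $A$ is essentially automatic. The subfunctor $\PP(-,A)\subseteq\Hom_\A(-,A)$ is natural in $A$ (any $f\colon A\to A'$ satisfies $f_*\PP(-,A)\subseteq\PP(-,A')$), so $q$ itself is a natural transformation, and the isomorphism with $(-)^0\circ Y$ inherits naturality from the universal property. The only non-routine step is the Yoneda-plus-\thref{wproj} observation that any morphism from a representable to an object of $\fp_0(\A^\op,\Ab)$ annihilates $\PP(-,A)$; there is no real obstacle beyond that.
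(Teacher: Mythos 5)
Your argument is correct, but it takes a different route from the paper's. The paper computes $(\Hom_\A(-,A))^0$ from the explicit construction of $(-)^0$ supplied by the dual of \thref{counitstuff}: it realises $(\Hom_\A(-,A))^0$ as the cokernel of the counit component $\LD_0(Y)(w\Hom_\A(-,A))\to\Hom_\A(-,A)$, and, using a projective presentation $P_1\to P_0\to A\to 0$ together with naturality of the counit, identifies the image of that component with the image of $\Hom_\A(-,P_0)\to\Hom_\A(-,A)$, which is $\PP(-,A)$; the cokernel is then $\underline{\Hom}_\A(-,A)$ by definition. You instead bypass $\LD_0(Y)$ and the counit entirely, using only the adjunction $(-)^0\dashv{\subseteq}$ from the recollement: you check that $\underline{\Hom}_\A(-,A)$ lies in $\fp_0(\A^\op,\Ab)$ (vanishing on projectives plus \thref{wproj}) and that the quotient $q_A\colon\Hom_\A(-,A)\to\underline{\Hom}_\A(-,A)$ is the universal arrow into $\fp_0(\A^\op,\Ab)$, the key step being the Yoneda argument that any map from a representable to a functor vanishing on projectives kills $\PP(-,A)$; naturality then comes for free from naturality of $q$ and of the adjunction bijection. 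Your proof is the more elementary and self-contained of the two, needing no information about $\LD_0(Y)$; the paper's computation, on the other hand, simultaneously identifies the image of the counit (equivalently, of the norm) at $\Hom_\A(-,A)$ with $\PP(-,A)$, a fact it reuses later when showing $j_{!*}A=\PP(-,A)$. The only point worth making explicit in your write-up is that $\underline{\Hom}_\A(-,A)$ is finitely presented (it is the cokernel of $\PP(-,A)\to\Hom_\A(-,A)$, as the paper notes just before the lemma), since \thref{wproj} and membership in $\fp_0(\A^\op,\Ab)$ presuppose this; with that remark included, your proof is complete.
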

\begin{proof}
For any $F\in\fp(\A^\op,\Ab)$, $F^0$ is constructed as the cokernel of the component $\zeta_F:\LD_0(Y)(wF)\to F$ of the counit of the adjunction $\LD_0(Y)\dashv w$. Choose a projective presentation 
\begin{displaymath}
\xymatrix{P_1\ar[r]&P_0\ar[r]&A\ar[r]&0}
\end{displaymath}
of $A$. By the naturality of the counit there is a commutative diagram
\begin{displaymath}
\xymatrix{\Hom_\A(-,P_1)\ar[r]&\Hom_\A(-,P_0)\ar@{=}[d]\ar[r]&(\LD_0Y)A\ar[d]^{\zeta_{\Hom_\A(-,A)}}\ar[r]&0\\
&\Hom_\A(-,P_0)\ar[r]&\Hom_\A(-,A)&}
\end{displaymath}
with the top row exact. Therefore the image of $\zeta_{\Hom_\A(-,A)}$ is the image of the induced morphism $\Hom_\A(-,P_0)\to\Hom_\A(-,A)$, which is $\PP(-,A)$. Therefore, by the construction of $(\Hom_\A(-,A))^0$,
\begin{displaymath}
(\Hom_\A(-,A))^0=\Hom_\A(-,A)/\PP(-,A)=\underline{\Hom}_\A(-,A).
\end{displaymath}
\end{proof}
\begin{cor}\thlabel{stabproj}For any $A\in\A$, $\underline{\Hom}_\A(-,A)$ is a projective in $\fp_0(\A^\op,\Ab)$. Furthermore, there are enough projectives in $\fp_0(\A^\op,\Ab)$ of this form.
\end{cor}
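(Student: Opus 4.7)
The plan is to exploit that $(-)^0$ is the left adjoint of the inclusion $i_*:\fp_0(\A^\op,\Ab)\subseteq\fp(\A^\op,\Ab)$ in the defect recollement, combined with the identification $(\Hom_\A(-,A))^0\cong\underline{\Hom}_\A(-,A)$ supplied by \thref{0hom}. Since $i_*$ is exact---it has both a left and a right adjoint in the recollement---its left adjoint $(-)^0$ sends projectives to projectives. Applied to the representable projective $\Hom_\A(-,A)\in\fp(\A^\op,\Ab)$, \thref{0hom} then immediately gives projectivity of $\underline{\Hom}_\A(-,A)$ in $\fp_0(\A^\op,\Ab)$. If a more direct verification were preferred, one could apply $\Hom_{\fp_0(\A^\op,\Ab)}(\underline{\Hom}_\A(-,A),-)$ to an exact sequence in $\fp_0(\A^\op,\Ab)$, rewrite it via the adjunction as $\Hom_{\fp(\A^\op,\Ab)}(\Hom_\A(-,A),i_*-)$, and invoke exactness of $i_*$ together with projectivity of $\Hom_\A(-,A)$.

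For the second claim, I take an arbitrary $G\in\fp_0(\A^\op,\Ab)$ and choose a projective presentation
\[\Hom_\A(-,B)\to\Hom_\A(-,C)\to G\to 0\]
in the ambient category $\fp(\A^\op,\Ab)$. Since $(-)^0$ is a left adjoint it is right exact, so applying it yields an exact sequence
\[\underline{\Hom}_\A(-,B)\to\underline{\Hom}_\A(-,C)\to G^0\to 0.\]
Because $i_*$ is fully faithful, the counit of the adjunction $(-)^0\dashv i_*$ is an isomorphism, so $G^0\cong G$. Hence the composite $\underline{\Hom}_\A(-,C)\to G$ furnishes the required epimorphism onto $G$ from a projective of the stated form.

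The only point calling for care is bookkeeping around which of the unit and counit of $(-)^0\dashv i_*$ is invertible: full faithfulness of $i_*$ forces the \emph{counit}, rather than the unit, to be an isomorphism, and this is what identifies $G^0$ with $G$ when $G$ already lies in $\fp_0(\A^\op,\Ab)$. Everything else is forced by the recollement structure together with \thref{0hom}, so I anticipate no genuine obstacle.
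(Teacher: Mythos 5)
Your proof is correct and follows essentially the same route as the paper: projectivity of $\underline{\Hom}_\A(-,A)$ comes from the adjunction $(-)^0\dashv i_*$ together with exactness of the inclusion (the paper writes out the same argument as the isomorphism chain $(\underline{\Hom}_\A(-,A),F)\cong(\Hom_\A(-,A),F)\cong FA$), and the enough-projectives claim comes from applying the right exact functor $(-)^0$ to an epimorphism from a representable and using $G\cong G^0$ for $G\in\fp_0(\A^\op,\Ab)$. Your bookkeeping about the counit being the invertible one is also correct, so there is nothing to fix.
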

\begin{proof}For any functor $F\in\fp_0(\A^\op,\Ab)$ and any object $A\in\A$, there are isomorphisms
\begin{displaymath}
(\underline{\Hom}_\A(-,A),F)\cong ((\Hom_\A(-,A))^0,F)\cong(\Hom_\A(-,A),F)\cong FA
\end{displaymath}
which are natural in $F$ and $A$. Therefore, $\underline{\Hom}_\A(-,A)$ is projective in $\fp_0(\A^\op,\Ab)$ for any $A\in\A$.

Let $F\in \fp(\A^\op,\Ab)$ be given. There is $A\in\A$ and an epimorphism $\Hom_\A(-,A)\to F$. By applying $(-)^0$, which is right exact, it follows from \thref{0hom} that there is an epimorphism $\underline{\Hom}_\A(-,A)\to F^0$. If $F\in\fp_0(\A^\op,\Ab)$ then $F\cong F^0$, so there is an epimorphism $\underline{\Hom}_\A(-,A)\to F$.
\end{proof}
\begin{cor}$\underline{\A}$ has weak kernels. That is, for any $g:B\to C\in\A$, there is a morphism $f:A\to B\in\A$ such that the sequence
\begin{displaymath}
\xymatrix{\underline{\Hom}_\A(-,A)\ar[rr]^{\underline{f}_*} &&\underline{\Hom}_\A(-,B)\ar[rr]^{\underline{g}_*}&&\underline{\Hom}_\A(-,C)}
\end{displaymath}
is exact.
\end{cor}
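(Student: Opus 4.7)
The plan is to realise the entire sequence inside the Serre subcategory $\fp_0(\A^\op,\Ab)$ and then to extract $f$ from a projective cover there. The key input is \thref{stabproj}: $\underline{\Hom}_\A(-, A)$ is projective in $\fp_0(\A^\op,\Ab)$, and there are enough such projectives.

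First I would check that $\underline{\Hom}_\A(-, X) \in \fp_0(\A^\op,\Ab)$ for every $X \in \A$. For any projective $P \in \A$ the identity of $P$ factors every morphism $P \to X$ through a projective, so $\PP(P, X) = \Hom_\A(P, X)$ and hence $\underline{\Hom}_\A(P, X) = 0$; \thref{wproj}(1) then gives the claim. In particular $\underline{g}_*$ is a morphism in the abelian category $\fp_0(\A^\op,\Ab)$ (which is abelian, since it is the Serre kernel of the exact localisation $w$), so it has a kernel $K$, computed as a kernel in $\fp(\A^\op,\Ab)$.

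Next, by \thref{stabproj}, I can pick an object $A \in \A$ and an epimorphism $\pi: \underline{\Hom}_\A(-, A) \twoheadrightarrow K$. Composing $\pi$ with the inclusion $K \hookrightarrow \underline{\Hom}_\A(-, B)$ produces a morphism in $\fp_0(\A^\op,\Ab)$, and, by the Yoneda-style computation in the proof of \thref{stabproj} (which identifies $\Hom(\underline{\Hom}_\A(-,A), \underline{\Hom}_\A(-,B))$ with $\underline{\Hom}_\A(A, B)$), this morphism is of the form $\underline{f}_*$ for some $f: A \to B \in \A$. By construction, $\Img(\underline{f}_*) = K = \Ker(\underline{g}_*)$, which is precisely the required exactness.

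There is no real obstacle here: all the heavy lifting was front-loaded into \thref{stabproj}, and the lemma is essentially a restatement of the existence of enough projectives of the specified form in $\fp_0(\A^\op,\Ab)$. The only mildly delicate point is noting that kernels in $\fp_0(\A^\op,\Ab)$ agree with kernels in $\fp(\A^\op,\Ab)$, but this is immediate from Serreness.
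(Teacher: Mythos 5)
Your proposal is correct and follows essentially the same route as the paper: take the kernel $K$ of $\underline{g}_*$, note it lies in $\fp_0(\A^\op,\Ab)$, and use \thref{stabproj} to cover it by some $\underline{\Hom}_\A(-,A)$, the composite with the inclusion being $\underline{f}_*$ for some $f:A\to B$. Your added details (that $\underline{\Hom}_\A(-,X)$ vanishes on projectives, hence lies in $\fp_0(\A^\op,\Ab)$, and that the composite is induced by an actual morphism of $\A$ via the Yoneda-type isomorphism) are exactly the points the paper leaves implicit.
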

\begin{proof}
Let $K$ be the kernel of the induced natural transformation $\underline{g}_*$. Since $\fp_0(\A^\op,\Ab)$ is closed under kernels, $K\in\fp_0(\A^\op,\Ab)$. Compose the inclusion $K\to\underline{\Hom}_\A(-,B)$ and an epimorphism $\underline{\Hom}_\A(-,A)\to K$ to find $\underline{f}_*$.
\end{proof}
\begin{lem}\thlabel{1hom}\emph{\cite[Section 4.1.1]{dean3}} For any $A\in\A$, 
\begin{displaymath}
\LD_2((-)^0)(\underline{\Hom}_\A(-,A))\cong\underline{\Hom}_\A(-,\Omega A),
\end{displaymath}
where $\Omega A$ is the first syzygy of $A$.
\end{lem}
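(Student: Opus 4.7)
The plan is to build an explicit length-two projective resolution of $\underline{\Hom}_\A(-,A)$ in $\fp(\A^\op,\Ab)$, apply $(-)^0$ term-by-term using \thref{0hom}, and read off the homology in degree $2$.

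First I fix an epimorphism $\pi:P_0\to A$ with $P_0$ projective and set $\Omega A=\Ker\pi$. The key observation is that $\Img(\pi_*)=\PP(-,A)$: given any morphism $X\to A$ factoring through an arbitrary projective $Q$, projectivity of $Q$ lets us lift the second factor along $\pi$, showing that the morphism already factors through $P_0$. Combined with the fact that Yoneda is left exact (so $\Ker(\pi_*)=\Hom_\A(-,\Omega A)$) and the defining right exact sequence $\PP(-,A)\hookrightarrow\Hom_\A(-,A)\twoheadrightarrow\underline{\Hom}_\A(-,A)$, this yields the exact sequence
\begin{displaymath}
\xymatrix{0\ar[r]&\Hom_\A(-,\Omega A)\ar[r]&\Hom_\A(-,P_0)\ar[r]^-{\pi_*}&\Hom_\A(-,A)\ar[r]&\underline{\Hom}_\A(-,A)\ar[r]&0.}
\end{displaymath}
Its first three terms are representable, hence projective in $\fp(\A^\op,\Ab)$, so this is a genuine projective resolution of length $2$.

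Next I would truncate off $\underline{\Hom}_\A(-,A)$ and apply $(-)^0$. By \thref{0hom}, $(\Hom_\A(-,X))^0\cong\underline{\Hom}_\A(-,X)$ for every $X\in\A$; and since $P_0$ is itself projective, every morphism into $P_0$ trivially factors through a projective, so $\PP(-,P_0)=\Hom_\A(-,P_0)$ and hence $\underline{\Hom}_\A(-,P_0)=0$. The resulting complex therefore collapses to
\begin{displaymath}
\xymatrix{0\ar[r]&\underline{\Hom}_\A(-,\Omega A)\ar[r]&0\ar[r]&\underline{\Hom}_\A(-,A)\ar[r]&0,}
\end{displaymath}
whose homology in degree $2$ is $\underline{\Hom}_\A(-,\Omega A)$, as required.

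The only step requiring any real care is the identification $\Img(\pi_*)=\PP(-,A)$, which is where the projectivity of the middle term of the resolution is used; once that is in hand, the vanishing $\underline{\Hom}_\A(-,P_0)=0$ is what isolates the second syzygy in degree $2$, and the rest of the argument is just reading off the homology of a three-term complex whose middle term is zero.
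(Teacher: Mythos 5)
Your proof is correct and follows essentially the same route as the paper: take the projective resolution $0\to\Hom_\A(-,\Omega A)\to\Hom_\A(-,P_0)\to\Hom_\A(-,A)\to\underline{\Hom}_\A(-,A)\to 0$, apply $(-)^0$ using \thref{0hom}, and read off the degree-two homology. You merely make explicit two points the paper leaves tacit, namely $\Img(\pi_*)=\PP(-,A)$ and $\underline{\Hom}_\A(-,P_0)=0$.
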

\begin{proof}
Let $P\to A$ be an epimorphism, where $P$ is projective. We obtain an exact sequence
\begin{displaymath}
\xymatrix{0\ar[r]&\Hom_\A(-,\Omega A)\ar[r]&\Hom_\A(-,P)\ar[r]&\Hom_\A(-,A)\ar[r]&\underline{\Hom}_\A(-,A)\ar[r]&0.}
\end{displaymath}
Therefore, $\LD_2((-)^0)(\underline{\Hom}_\A(-,A))$ is the homology of 
\begin{displaymath}
\xymatrix{0\ar[r]&(\Hom_\A(-,\Omega A))^0\ar[r]&(\Hom_\A(-,P))^0}
\end{displaymath}
which, by \thref{0hom}, is 
\begin{displaymath}
(\Hom_\A(-,\Omega A))^0=\underline{\Hom}_\A(-,\Omega A).
\end{displaymath}
\end{proof}
\begin{rmk}\thref{1hom} explains why syzygies form a functor $\Omega:\A\to\underline{\A}$.
\end{rmk}
\begin{prop}\emph{\cite[Section 4.1.1]{dean3}}\thlabel{prehereqher} If $\A$ has enough projectives, then the defect recollement
\begin{displaymath}
\xymatrix{\fp_0(\A^\op,\Ab)\ar[rr]|{\subseteq}&&\fp(\A^\op,\Ab)\ar@<2.5ex>[ll]^{(-)_0}\ar@<-2.5ex>[ll]_{(-)^0}\ar[rr]|{w}&&\mathcal{A}\ar@<2.5ex>[ll]^{Y}\ar@<-2.5ex>[ll]_{\LD_0(Y)}}
\end{displaymath}
is pre-hereditary if and only if $\A$ is hereditary.
\end{prop}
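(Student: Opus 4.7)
The plan is to unravel the definition of pre-hereditary in this specific recollement and reduce it, via \thref{stabproj} and \thref{1hom}, to a statement about syzygies.

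First I would note that, in the notation of \thref{recoldef}, the defect recollement has $i_*$ the inclusion and $i^*=(-)^0$. So the recollement is pre-hereditary precisely when $\LD_2((-)^0)(P)=0$ for every projective $P\in\fp_0(\A^\op,\Ab)$. Because $(-)^0$ is a right exact additive functor, its higher derived functors are additive, and it is enough to check vanishing on a set of projective generators. By \thref{stabproj}, the functors $\underline{\Hom}_\A(-,A)$ with $A\in\A$ form such a set of projective generators, so the condition reduces to
\begin{displaymath}
\LD_2((-)^0)(\underline{\Hom}_\A(-,A))=0\quad\text{for every }A\in\A.
\end{displaymath}

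Second, by \thref{1hom}, the left-hand side is isomorphic to $\underline{\Hom}_\A(-,\Omega A)$, so pre-heredity is equivalent to $\underline{\Hom}_\A(-,\Omega A)=0$ for every $A\in\A$. The key elementary observation is that $\underline{\Hom}_\A(-,B)=0$ if and only if $B$ is projective: one direction is clear (any morphism into a projective factors through that projective via a suitable splitting), and for the other direction one applies vanishing to $1_B$ to get a factorisation $B\to P\to B$ of the identity through a projective, exhibiting $B$ as a summand of a projective. Applying this to $B=\Omega A$, the pre-heredity of the defect recollement becomes the statement that $\Omega A$ is projective for every $A\in\A$, which is exactly the condition that every object of $\A$ has projective dimension at most one, i.e. $\A$ is hereditary.

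I do not expect a real obstacle here: the two non-trivial ingredients, \thref{stabproj} and \thref{1hom}, are already proved, and the remaining argument is the short identification $\underline{\Hom}_\A(-,B)=0\Leftrightarrow B\text{ projective}$. The mildly delicate step is ensuring that vanishing on the generators $\underline{\Hom}_\A(-,A)$ suffices, which follows from \thref{stabproj} together with the right exactness of $(-)^0$ and a standard long exact sequence argument (given a projective $P\in\fp_0(\A^\op,\Ab)$, pick an epimorphism $\underline{\Hom}_\A(-,A)\to P$, which splits, so $P$ is a summand of $\underline{\Hom}_\A(-,A)$ and its higher derived functors inherit the vanishing).
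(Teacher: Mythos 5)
Your proof is correct and follows essentially the same route as the paper: the paper's own argument is just the one-line application of \thref{1hom}, with the reduction to the projective generators $\underline{\Hom}_\A(-,A)$ (via \thref{stabproj}) and the fact that $\underline{\Hom}_\A(-,B)=0$ exactly when $B$ is projective left implicit. You have simply made those implicit steps explicit, which is fine.
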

\begin{proof}
By \thref{1hom}, the recollement is pre-hereditary if and only if $\underline{\Hom}_\A(-,\Omega A)=0$ for every $A\in\A$, which is to say that every first syzygy is projective.
\end{proof}
\begin{prop}\thlabel{zeroex}Suppose $\A$ is hereditary.  
\begin{enumerate}
\item The functor $\A\to\fp_0(\A^\op,\Ab):A\mapsto\underline{\Hom}(-,A)$ is left exact. Equivalently, $\underline{\A}$ has kernels and the quotient functor $\A\to\underline{\A}$ is left exact.

\item The functor $(-)^0:\fp(\A^\op,\Ab)\to\fp_0(\A^\op,\Ab)$ is exact.
\end{enumerate}

\end{prop}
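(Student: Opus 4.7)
The plan is to reduce both parts to the single consequence of hereditariness that every subobject of a projective object is itself projective; both the direct treatment of Part 1 and the derived-functor computation needed for Part 2 will rely on this via pullback diagrams.

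For Part 1, I will unpack what it means for the functor $A \mapsto \underline{\Hom}_\A(-,A)$ to send a short exact sequence $0 \to A \xrightarrow{f} B \xrightarrow{g} C \to 0$ to an exact sequence $0 \to \underline{\Hom}_\A(-,A) \to \underline{\Hom}_\A(-,B) \to \underline{\Hom}_\A(-,C)$ in $\fp_0(\A^\op,\Ab)$. Injectivity of the first map reduces to showing that if $h\colon X \to A$ and $fh$ factors through a projective $P$ via $X \xrightarrow{r} P \xrightarrow{\pi} B$, then $h$ itself factors through a projective; forming the pullback $Q$ of $f$ along $\pi$, the monic $Q \hookrightarrow P$ makes $Q$ projective by hereditariness, and the universal map $X \to Q$ realises such a factorisation of $h$. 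For exactness in the middle, given $h\colon X \to B$ with $gh$ factoring as $X \xrightarrow{r} P \xrightarrow{\pi} C$ through a projective $P$, I lift $\pi$ through the epi $g$ to some $\tilde\pi\colon P \to B$ using projectivity of $P$; then $h - \tilde\pi r$ is annihilated by $g$, hence factors through $\Ker g = A$, so $h$ represents the same class in $\underline{\Hom}_\A(X,B)$ as an element in the image of $\underline{f}_*$.

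For Part 2, since $(-)^0$ is a left adjoint it is already right exact, so I only need to verify that $\LD_1((-)^0)(F) = 0$ for every $F \in \fp(\A^\op,\Ab)$. Any such $F$ admits a projective presentation $\Hom_\A(-,B) \xrightarrow{\beta_*} \Hom_\A(-,C) \to F \to 0$ arising from a morphism $\beta\colon B \to C$, whose kernel is $\Hom_\A(-,K)$ for $K = \Ker\beta$, giving a length-two projective resolution $0 \to \Hom_\A(-,K) \to \Hom_\A(-,B) \to \Hom_\A(-,C) \to F \to 0$. Applying $(-)^0$ produces $\underline{\Hom}_\A(-,K) \to \underline{\Hom}_\A(-,B) \to \underline{\Hom}_\A(-,C)$, whose homology at the middle term computes $\LD_1((-)^0)(F)$. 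Factoring $\beta = jp$ with $p\colon B \twoheadrightarrow \Img\beta$ and $j\colon \Img\beta \hookrightarrow C$, Part 1 applied to the short exact sequence $0 \to K \to B \to \Img\beta \to 0$ gives $\Ker(\underline{p}_*) = \Img(\underline{\iota}_*)$ for $\iota\colon K \hookrightarrow B$, while the pullback argument from Part 1 applied to the monic $j$ shows that $\underline{j}_*$ is injective; since $\underline{\beta}_* = \underline{j}_*\,\underline{p}_*$, this yields $\Ker(\underline{\beta}_*) = \Ker(\underline{p}_*) = \Img(\underline{\iota}_*)$, as required.

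The main delicate point is bookkeeping which maps are monic or epic at each stage, so that the hereditariness hypothesis is deployed on a genuine subobject of a projective; once the pullback template of Part 1 is set up, Part 2 reduces to tracking the epi-mono factorisation of $\beta$.
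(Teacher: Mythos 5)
Your proof is correct, but Part 1 is argued by a genuinely different route than the paper's. The paper deduces left exactness of $A\mapsto\underline{\Hom}_\A(-,A)$ by applying the long exact sequence of $\Ext_\A$ to the short exact sequence, using hereditariness in the form $\Ext^2_\A(C,-)=0$, and then transporting the resulting exact sequence $\Ext^1_\A(C,-)\to\Ext^1_\A(B,-)\to\Ext^1_\A(A,-)\to 0$ through the duality $W:(\fp_0(\A,\Ab))^\op\to\fp_0(\A^\op,\Ab)$ with $W\Ext^1_\A(A,-)\cong\underline{\Hom}_\A(-,A)$. You instead chase elements: injectivity of $\underline{f}_*$ via the pullback of $f$ along a map out of a projective (using hereditariness in the equivalent form that subobjects of projectives are projective — you should at least note that this equivalence follows from Schanuel's lemma, since the paper's definition of hereditary is ``projective dimension at most one''), and middle exactness by lifting through the epimorphism $g$, which in fact needs no hereditariness at all. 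Your argument is more elementary and self-contained, avoiding $W$ and the $\Ext$ machinery; the paper's is shorter and makes transparent that the obstruction is exactly $\Ext^2$. For Part 2 the two arguments coincide (compute $\LD_1((-)^0)$ on the canonical length-two resolution and invoke Part 1 together with \thref{0hom}), except that you are more careful than the paper at one point: the resolution yields a left exact, not short exact, sequence $0\to K\to B\to C$ in $\A$, and your epi--mono factorisation of $\beta$ with the observation $\Ker(\underline{\beta}_*)=\Ker(\underline{p}_*)$ supplies precisely the routine step (left exact functors preserve kernels) that the paper passes over silently.
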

\begin{proof}\begin{enumerate}
\item Let
\begin{displaymath}
\xymatrix{0\ar[r]&A\ar[r]&B\ar[r]&C\ar[r]&0}
\end{displaymath}
be a short exact sequence. This induces an exact sequence
\begin{displaymath}
\xymatrix{\Ext^1_\A(C,-)\ar[r]&\Ext^1_\A(B,-)\ar[r]&\Ext^1_\A(A,-)\ar[r]&\Ext^2_\A(C,-)\ar[r]&\dots}
\end{displaymath}
in $\fp_0(\A,\Ab)$. Since $\Ext^2_\A(C,-)=0$, applying the duality $W$ gives an exact sequence
\begin{displaymath}
\xymatrix{0\ar[r]&\underline{\Hom}_\A(-,A)\ar[r]&\underline{\Hom}_\A(-,B)\ar[r]&\underline{\Hom}_\A(-,C).}
\end{displaymath}
\item Let $F\in\fp(\A^\op,\Ab)$ be given, and find an exact sequence
\begin{displaymath}
\xymatrix{0\ar[r]&\Hom_\A(-,A)\ar[r]&\Hom_\A(-,B)\ar[r]&\Hom_\A(-,C)\ar[r]&F\ar[r]&0}
\end{displaymath}
for some $A,B,C\in\A$. Then $\LD_1((-)^0)(F)$ is the homology of 
\begin{displaymath}
\xymatrix{0\ar[r]&(\Hom_\A(-,A))^0\ar[r]&(\Hom_\A(-,B))^0\ar[r]&(\Hom_\A(-,C))^0\ar[r]&0}
\end{displaymath}
at $(\Hom_\A(-,B))^0$, i.e. the homology of the (left exact) sequence
\begin{displaymath}
\xymatrix{0\ar[r]&\underline{\Hom}_\A(-,A)\ar[r]&\underline{\Hom}_\A(-,B)\ar[r]&\underline{\Hom}_\A(-,C)\ar[r]&0}
\end{displaymath}
at $\underline{\Hom}_\A(-,B)$, which is zero. Therefore, $\LD_1((-)^0)=0$, so $(-)^0$ is left exact. 
\end{enumerate}
\end{proof}
\begin{thm}\thlabel{mvchar}For the defect recollement
\begin{displaymath}
\xymatrix{\fp_0(\A^\op,\Ab)\ar[rr]|{\subseteq}&&\fp(\A^\op,\Ab)\ar@<2.5ex>[ll]^{(-)_0}\ar@<-2.5ex>[ll]_{(-)^0}\ar[rr]|{w}&&\mathcal{A}\ar@<2.5ex>[ll]^{Y}\ar@<-2.5ex>[ll]_{\LD_0(Y)}}
\end{displaymath}
the following are equivalent.
\begin{enumerate}
\item It is pre-hereditary.
\item It is an instance of the MacPherson-Vilonen construction.
\item $\A$ is hereditary.
\end{enumerate}
When the defect recollement is MacPherson-Vilonen, it is the MacPherson-Vilonen construction for the natural transformation $0\to \underline{Y}$, where $\underline{Y}:\A\to\fp(\A^\op,\Ab)$ is defined by $\underline{Y}A=\underline{\Hom}_\A(-,A)$ for any $A\in\A$.
\end{thm}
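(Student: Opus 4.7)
The plan is to combine the earlier result \thref{prehereqher} with Franjou--Pirashvili's criterion \thref{mvtest}. The equivalence (1)$\iff$(3) is precisely \thref{prehereqher}, and the implication (2)$\Rightarrow$(1) is the ``only if'' direction of \thref{mvtest}. So the substantive step is (1)$\Rightarrow$(2), and then I need to identify the natural transformation explicitly.

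To prove (1)$\Rightarrow$(2), I would apply \thref{mvtest} with the candidate $r=(-)^0\colon\fp(\A^\op,\Ab)\to\fp_0(\A^\op,\Ab)$. Two conditions need checking. First, $r$ must be exact: this is exactly \thref{zeroex}(2), which is available because we are assuming (1), equivalently (3). Second, I need $r\circ i_*\cong 1_{\fp_0(\A^\op,\Ab)}$; but $i_*$ is simply the inclusion and $(-)^0$ is its left adjoint, so this isomorphism is a standard consequence of $i_*$ being fully faithful (the unit $1\to i^*i_*$ is invertible in any recollement). \thref{mvtest} then produces the MacPherson--Vilonen description, with the relevant natural transformation given by $rN$, where $N$ is the norm.

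For the final identification, I would compute $rN=(-)^0N$ componentwise on the norm $N=\zeta Y\colon\LD_0(Y)\to Y$. By \thref{0hom}, the target is $(-)^0\circ Y=\underline{Y}$. For the source, I claim that $(-)^0\circ\LD_0(Y)=0$ as a functor $\A\to\fp_0(\A^\op,\Ab)$: it is a composite of two right exact functors (the left adjoint $(-)^0$ is always right exact, independently of hereditariness), and at any projective $P\in\A$ its value is $(\Hom_\A(-,P))^0=\underline{\Hom}_\A(-,P)=0$, since $\mathrm{id}_P$ factors through the projective $P$. Because $\A$ has enough projectives, a right exact functor out of $\A$ that vanishes on projectives is zero. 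Hence $rN$ is forced to be the unique natural transformation $0\to\underline{Y}$, as claimed.

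I do not anticipate any genuine obstacle: the entire argument is engineered by \thref{mvtest}, and the only ingredient that is not purely formal is exactness of $(-)^0$, which \thref{zeroex}(2) supplies precisely under the hereditariness hypothesis. The cleanest way to organise the write-up is therefore to prove (1)$\Rightarrow$(2) first, then close the circle with the already-established (1)$\iff$(3) and the standing (2)$\Rightarrow$(1) of \thref{mvtest}.
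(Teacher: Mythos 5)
Your proposal is correct and follows essentially the same route as the paper: \thref{prehereqher} for the equivalence of (1) and (3), \thref{mvtest} applied with $r=(-)^0$ (exact by \thref{zeroex}, and $r i_*\cong 1$ since $(-)^0$ is left adjoint to the fully faithful inclusion) for (1)$\Rightarrow$(2), and the identification of $rN$ as $0\to\underline{Y}$ via $(-)^0\circ\LD_0(Y)=0$ and $(-)^0\circ Y\cong\underline{Y}$. The only difference is that you spell out why $(-)^0\circ\LD_0(Y)=0$ (right exactness plus vanishing on projectives), a point the paper leaves implicit, and which also follows at once from $i^*j_!=0$ in \thref{counitstuff}.
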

\begin{proof}
The equivalence of items 1 and 3 is given by \thref{prehereqher}. If item 1 holds then item 2 holds by \thref{mvtest} and \thref{zeroex} because, in the notation of \thref{mvtest}, we can set $r=(-)^0$. Also, item 2 implies item 1 by \thref{mvtest}. 

Suppose the defect recollement is an instance of the MacPherson-Vilonen construction. Consider the norm $N:\LD_0(Y)\to Y$. Using \thref{mvtest}, it is the MacPherson-Vilonen construction for $(-)^0\circ N$. Since $(-)^0\circ \LD_0(Y)=0$ and $(-)^0\circ Y\cong \underline{Y}$ by \thref{zeroex}, the defect recollement is the MacPherson-Vilonen construction for $0\to\underline{Y}$.
\end{proof}
\begin{thm}\thlabel{quot}If $\A$ is hereditary then
\begin{displaymath}
\frac{\fp(\A^\op,\Ab)}{\Ker((-)^0)}\simeq\fp_0(\A^\op,\Ab).\end{displaymath}
\end{thm}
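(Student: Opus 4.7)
The plan is to invoke the general principle recalled at the start of the paper: a functor $G:\C\to\D$ between abelian categories that preserves finite limits and has a fully faithful right adjoint is automatically a Serre quotient functor, exhibiting $\D\simeq\C/\Ker G$. So to obtain $\fp(\A^\op,\Ab)/\Ker((-)^0)\simeq\fp_0(\A^\op,\Ab)$ it suffices to show that $(-)^0$ is such a localisation.

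First I would observe that in the defect recollement, $(-)^0$ is by definition the left adjoint of the inclusion $\fp_0(\A^\op,\Ab)\hookrightarrow\fp(\A^\op,\Ab)$, and the inclusion is plainly fully faithful. So the only remaining ingredient is left exactness of $(-)^0$. This is precisely the content of \thref{zeroex}(2) under the hereditary hypothesis: $(-)^0$ is exact because $\LD_1((-)^0)=0$, which was established by resolving an arbitrary $F\in\fp(\A^\op,\Ab)$ by representables and comparing with the left exact sequence of stable Homs provided by \thref{zeroex}(1).

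Combining these two facts, $(-)^0$ is a localisation in the sense defined in the introduction, so by the remark there its target is the Serre quotient of its source by its kernel, yielding the claimed equivalence. There is no real obstacle here; the work was done in \thref{zeroex}, and \thref{quot} is essentially a corollary packaging it as a Serre quotient statement.
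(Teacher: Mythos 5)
Your proposal is correct and follows the paper's own argument: the paper likewise notes that, by \thref{zeroex}, $(-)^0$ is an exact left adjoint of the (fully faithful) inclusion $\fp_0(\A^\op,\Ab)\to\fp(\A^\op,\Ab)$ when $\A$ is hereditary, hence a localisation in the sense of the introduction, and the Serre quotient description gives the stated equivalence. No gaps.
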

\begin{proof}
If $\A$ is hereditary, then by \thref{zeroex}, the functor $(-)^0$ is the exact left adjoint of the inclusion functor $\fp_0(\A^\op,\Ab)\to\fp(\A^\op,\Ab)$, so it is a localisation. In particular, $(-)^0$ is a Serre localisation, and we obtain the given formula.
\end{proof}
\section{!}
When $\A$ is hereditary, the functor $(-)^0$ is a localisation. We are motivated by \thref{quot} to describe its kernel. When $\A$ is hereditary, $\fp(\A^\op,\Ab)/\fp_0(\A^\op,\Ab)\simeq \A$ and $\fp(\A^\op,\Ab)/\Ker((-)^0)\simeq \fp_0(\A^\op,\Ab)$, so it is tempting to attempt to use a ``calculus of quotients" to deduce that $\Ker((-)^0)\simeq \A$. We will prove rigorously that this result holds.

Our strategy is to characterise the subcategory of $\fp(\A^\op,\Ab)$ which consists of those finitely presented functors $F:\A^\op\to\Ab$ such that $F^0=0$ and $\LD_1((-)^0)F=0$. When $\A$ is hereditary, $\LD_1((-)^0)=0$, so this amounts to characterising the kernel of $(-)^0$ in that case.
\begin{prop}\emph{\cite[Proposition 4.11]{comprecoll}}\thlabel{factorial} In the notation of \thref{recoldef}, for a recollement situation with enough projectives, if
\begin{displaymath}
\C_!\defeq\{C\in\C:i^*(C)=0\text{ and }\LD_1(i^*)(C)= 0\}
\end{displaymath}
then $\C_!$ is the image of the fully faithful functor $j_!:\C''\to\C$, and therefore $j_!$ induces an equivalence of categories $\C''\to \C_!$.
\end{prop}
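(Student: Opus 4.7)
The plan is to show two inclusions in opposite directions, both of which follow essentially for free from \thref{counitstuff}, and then combine with the full faithfulness of $j_!$ to get the equivalence.

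First I would observe that the image of $j_!$ is contained in $\C_!$. For any $X \in \C''$, \thref{counitstuff} tells us that $i^* j_! = 0$ and $\LD_1(i^*) j_! = 0$, so $j_! X$ satisfies both defining conditions of $\C_!$. Consequently $j_!$ factors through the full subcategory $\C_!$, and since $j_!: \C'' \to \C$ is fully faithful the restricted functor $\C'' \to \C_!$ is also fully faithful.

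Next I would show essential surjectivity onto $\C_!$. Let $C \in \C_!$ be given. By \thref{counitstuff} there is an exact sequence
\begin{displaymath}
\xymatrix{j_! j^* C \ar[r]^-{\zeta_C} & C \ar[r] & i_* i^* C \ar[r] & 0.}
\end{displaymath}
Since $i^* C = 0$, the rightmost term vanishes and $\zeta_C$ is an epimorphism. Still by \thref{counitstuff}, since $i^* C = 0$ we have the natural isomorphism $\Ker(\zeta_C) \cong i_* \LD_1(i^*)(C)$, and the condition $\LD_1(i^*)(C) = 0$ forces $\Ker(\zeta_C) = 0$. Therefore $\zeta_C$ is an isomorphism, exhibiting $C$ as an object of the essential image of $j_!$.

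Putting these together, $j_!$ restricts to a fully faithful essentially surjective functor $\C'' \to \C_!$, hence an equivalence, and the image of $j_!$ is precisely $\C_!$. There is no serious obstacle here; the entire argument is a packaging of the exact sequence and kernel description already supplied by \thref{counitstuff}.
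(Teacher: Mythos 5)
Your proof is correct and follows essentially the same route as the paper: both inclusions are read off from \thref{counitstuff}, with the containment $\Img(j_!)\subseteq\C_!$ coming from $i^*j_!=0$ and $\LD_1(i^*)j_!=0$, and the reverse containment from the exact sequence for $\zeta_C$ together with the identification $\Ker(\zeta_C)\cong i_*\LD_1(i^*)(C)$ when $i^*C=0$. Your write-up merely makes explicit the "if and only if" step that the paper states in one line, so there is nothing to add.
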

\begin{proof}
By \thref{counitstuff}, $i^*j_!=0$ and $\LD_1(i^*)j_!=0$, so $\Img(j_!)\subseteq\C_!$. Also by \thref{counitstuff}, for any $C\in\C$, $C\in\C_!$ if and only if the component $j_!j^*C\to C$ of the counit $j_!j^*\to 1_\C$ is an isomorphism, so $\Img(j_!)\supseteq\C_!$.
\end{proof}
\begin{thm}\thlabel{gllemma}In the notation of \thref{recoldef}, for any recollement situation with enough projectives,
\begin{displaymath}
\C_!=\{C\in\C:\Hom_\C(C,-)|_{\Img(i_*)}=0\text{ and }\Ext^1_\C(C,-)|_{\Img(i_*)}=0\}.
\end{displaymath}
\end{thm}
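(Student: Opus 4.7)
The approach is to translate the two vanishing conditions defining $\C_!$ into $\Hom$- and $\Ext^1$-vanishing against $\Img(i_*)$, using the adjunction $i^*\dashv i_*$ throughout. For the $\Hom$ side, the adjunction gives a natural isomorphism $\Hom_\C(C,i_*C')\cong\Hom_{\C'}(i^*C,C')$, so $\Hom_\C(C,-)|_{\Img(i_*)}=0$ if and only if $i^*C=0$ (apply Yoneda with $C'=i^*C$).

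For the $\Ext^1$ side, I would assume $i^*C=0$ and establish the natural isomorphism
\begin{displaymath}
\Ext^1_\C(C,i_*C')\cong\Hom_{\C'}(\LD_1(i^*)C,C').
\end{displaymath}
Pick a short exact sequence $0\to K\to P\to C\to 0$ with $P$ projective in $\C$. Applying $\Hom_\C(-,i_*C')$, together with the adjunction and projectivity of $P$, gives
\begin{displaymath}
\Ext^1_\C(C,i_*C')\cong\Coker\bigl(\Hom_{\C'}(i^*P,C')\to\Hom_{\C'}(i^*K,C')\bigr).
\end{displaymath}
On the other hand, the long exact sequence for $\LD_\bullet(i^*)$, together with $\LD_1(i^*)P=0$ and the assumption $i^*C=0$, yields an exact sequence $0\to\LD_1(i^*)C\to i^*K\to i^*P\to 0$ in $\C'$. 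Applying $\Hom_{\C'}(-,C')$ and using that $i^*P$ is projective (see below), we get
\begin{displaymath}
0\to\Hom_{\C'}(i^*P,C')\to\Hom_{\C'}(i^*K,C')\to\Hom_{\C'}(\LD_1(i^*)C,C')\to 0,
\end{displaymath}
which identifies the cokernel in the previous display with $\Hom_{\C'}(\LD_1(i^*)C,C')$. Vanishing of the right hand side for every $C'$ is equivalent, by Yoneda, to $\LD_1(i^*)C=0$.

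The one non-formal ingredient is the projectivity of $i^*P$ in $\C'$, and this is really the main obstacle to a completely automatic proof. It follows from the exactness of $i_*$, which in turn holds in every recollement of abelian categories because $i_*$ has both a left adjoint $i^*$ and a right adjoint $i^!$, and therefore preserves all finite limits and colimits; once $i_*$ is exact, $\Hom_{\C'}(i^*P,-)\cong\Hom_\C(P,i_*-)$ is exact, so $i^*P$ is projective. Combining the two characterisations with \thref{factorial} yields the stated equality.
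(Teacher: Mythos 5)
Your proposal is correct, and it is close in spirit to the paper's argument — both start from a short exact sequence $0\to\Omega C\to P\to C\to 0$ with $P$ projective, use the adjunction $i^*\dashv i_*$, the Yoneda lemma, and the exact sequence $0\to\LD_1(i^*)C\to i^*\Omega C\to i^*P\to i^*C\to 0$ — but the two routes differ in one substantive way. You treat the $\Hom$ and $\Ext^1$ conditions separately and, assuming $i^*C=0$, establish the natural isomorphism $\Ext^1_\C(C,i_*C')\cong\Hom_{\C'}(\LD_1(i^*)C,C')$; this forces you to invoke the extra ingredient that $i^*P$ is projective in $\C'$ (which you correctly justify via exactness of $i_*$, a consequence of $i_*$ having both adjoints, so that $\Hom_{\C'}(i^*P,-)\cong\Hom_\C(P,i_*-)$ is exact). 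The paper avoids needing this fact altogether: it assembles the four-term exact sequence $0\to\Hom_\C(C,i_*C')\to\Hom_\C(P,i_*C')\to\Hom_\C(\Omega C,i_*C')\to\Ext^1_\C(C,i_*C')\to 0$ and observes, via the adjunction and Yoneda, that the two vanishing conditions hold simultaneously for all $C'$ exactly when $i^*\Omega C\to i^*P$ is an isomorphism, which the displayed derived-functor sequence converts at once into $i^*C=0$ and $\LD_1(i^*)C=0$; in particular no conditioning on $i^*C=0$ and no exactness of $i_*$ is needed. Your approach buys a slightly finer statement (an explicit computation of $\Ext^1_\C(C,-)$ on $\Img(i_*)$ in terms of $\LD_1(i^*)C$) at the price of an additional, though standard and correctly proved, lemma; the paper's is the more economical Yoneda comparison. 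One cosmetic remark: you do not really need \thref{factorial} at the end, since $\C_!$ is by definition the class of $C$ with $i^*C=0$ and $\LD_1(i^*)C=0$.
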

\begin{proof}
For any $C\in\C$, let 
\begin{displaymath}
\xymatrix{0\ar[r]&\Omega C\ar[r]&P\ar[r]&C\ar[r]&0}
\end{displaymath}
be an exact sequence where $P$ is projective. For any $C'\in\C'$, this induces a commutative diagram
\begin{displaymath}
\xymatrix{\Hom_\C(C,i_*C')\ar[r]\ar[d]_\cong
&\Hom_\C(P,i_*C')\ar[r]\ar[d]^\cong\ar[r]&\Hom_\C(\Omega C,i_*C')\ar[r]\ar[d]^\cong\ar[r]&\Ext^1_\C(C,i_*C')
\\
\Hom_{\C'}(i^*C,C')\ar[r]&\Hom_{\C'}(i^*P,C')\ar[r]&\Hom_{\C'}(i^*\Omega C,C')&}
\end{displaymath}
with exact rows, which shows that the assumption
\begin{align*}
\Hom_\C(C,-)|_{\Img(i_*)}&=0\\
\Ext^1_\C(C,-)|_{\Img(i_*)}&=0
\end{align*}
is equivalent to the requirement that the morphism $i^*\Omega C\to i^*P$ is an isomorphism, which, by the induced exact sequence
\begin{displaymath}
\xymatrix{0\ar[r]&\LD_1(i^*)(C)\ar[r]&i^*\Omega C\ar[r]&i^*P\ar[r]&i^*C\ar[r]&0,}
\end{displaymath}
is equivalent to the requirement that $i^*C=0$ and $\LD_1(i^*)(C)=0$.
\end{proof}
We define the subcategory $\fp_!(\A^\op,\Ab)\defeq\fp(\A^\op,\Ab)_!$ of $\fp(\A^\op,\Ab)$ with respect to the defect recollement. That is
\begin{displaymath}
\fp_!(\A^\op,\Ab)=\{F\in\fp(\A^\op,\Ab):F^0=0\text{ and }\LD_1((-)^0)(F)=0\}.
\end{displaymath}
\begin{thm}\thlabel{bangchar}For any $F\in\fp(\A^\op,\Ab)$, the following are equivalent.
\begin{enumerate}
\item $F\in\fp_!(\A^\op,\Ab)$.
\item $F\cong \LD_0(Y)A$ for some $A\in\A$.
\item There is an exact sequence 
\begin{displaymath}
\xymatrix{0\ar[r]&K\ar[r]^k&P_1\ar[r]^d&P_0}
\end{displaymath}
in $\A$ which gives a projective presentation 
\begin{displaymath}
\xymatrix{0\ar[r]&\Hom_\A(-,K)\ar[r]^{k_*}&\Hom_\A(-,P_1)\ar[r]^{d_*}&\Hom_\A(-,P_0)\ar[r]&F\ar[r]&0}
\end{displaymath}
of $F$ such that $P_0$ and $P_1$ are projective.
\end{enumerate}
\end{thm}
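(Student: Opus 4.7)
The plan is to deduce the equivalence $(1)\Leftrightarrow(2)$ as an immediate instance of \thref{factorial}, and then establish $(2)\Leftrightarrow(3)$ by directly unwinding the definition of $\LD_0(Y)$ using a projective presentation.

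For $(1)\Leftrightarrow(2)$, I would apply \thref{factorial} to the defect recollement. In that recollement we have $i^*=(-)^0$ and $j_!=\LD_0(Y)$, so by definition $\fp_!(\A^\op,\Ab)$ coincides with $\C_!$ for this choice of recollement. The conclusion of \thref{factorial} is precisely that $\C_!$ equals the essential image of $j_!$, which in our setting says $F\in\fp_!(\A^\op,\Ab)$ if and only if $F\cong\LD_0(Y)A$ for some $A\in\A$.

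For $(2)\Rightarrow(3)$, assume $F\cong\LD_0(Y)A$. Using that $\A$ has enough projectives, I would choose a projective presentation $P_1\xrightarrow{d}P_0\to A\to 0$ of $A$ and set $K=\Ker(d)$, obtaining a left exact sequence $0\to K\to P_1\xrightarrow{d} P_0$ in $\A$. Applying the left exact functor $\Hom_\A(-,-)$ (in the second variable) gives exactness of
\begin{displaymath}
\xymatrix{0\ar[r]&\Hom_\A(-,K)\ar[r]&\Hom_\A(-,P_1)\ar[r]^{d_*}&\Hom_\A(-,P_0).}
\end{displaymath}
Since $\LD_0(Y)$ is right exact and agrees with $Y$ on projectives, $F\cong\LD_0(Y)A$ is the cokernel of $d_*$, which gives the desired exactness ending at $F\to 0$. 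Conversely, for $(3)\Rightarrow(2)$, given the exact sequence of (3), I would set $A=\Coker(d:P_1\to P_0)$, so that $P_1\xrightarrow{d}P_0\to A\to 0$ is a projective presentation of $A$. Applying $\LD_0(Y)$ to this presentation and using right exactness together with $\LD_0(Y)(P_i)=\Hom_\A(-,P_i)$ yields $\LD_0(Y)A\cong\Coker(d_*)\cong F$.

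There is no serious obstacle; the only point that requires a moment's care is verifying that exactness at the three left positions in the sequence of (3) reduces to the left exactness of $\Hom_\A(-,-)$ applied to $0\to K\to P_1\to P_0$, while exactness at $F$ is a rephrasing of the defining cokernel property of $\LD_0(Y)A$. The substantive content is absorbed into \thref{factorial}, which supplies the nontrivial characterisation of $\fp_!(\A^\op,\Ab)$ as the essential image of $\LD_0(Y)$.
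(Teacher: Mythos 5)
Your proposal is correct and follows essentially the same route as the paper: the equivalence of (1) and (2) is exactly the application of \thref{factorial} to the defect recollement (with $i^*=(-)^0$, $j_!=\LD_0(Y)$), and the equivalence of (2) and (3) is what the paper dismisses as ``by definition of $\LD_0(Y)$'', which you merely spell out via a projective presentation and the left exactness of the Yoneda embedding.
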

\begin{proof}
Items 1 and 2 are equivalent by \thref{factorial}, which says that $\A\to\fp_!(\A^\op,\Ab):A\mapsto \LD_0(Y)A$ is an equivalence. Item 2 is equivalent to item 3 by definition of $\LD_0(Y)$.
\end{proof}
We now characterise the subcategory $\fp_!(\A^\op,\Ab)$ of $\fp(\A^\op,\Ab)$ in terms of projective presentations and orthogonality properties.
\begin{defn}For morphisms $f:A\to B\in\A$ and $g:B\to C\in\A$, we say that $\underline{f}:\underline{A}\to\underline{B}$ is a \textbf{weak kernel} of $\underline{g}:\underline{B}\to\underline{C}$ in $\underline{\A}$ when the induced sequence
\begin{displaymath}
\xymatrix{\underline{\Hom}_\A(-,A)\ar[r]^{\underline{f}_*}&\underline{\Hom}_\A(-,B)\ar[r]^{\underline{g}_*}&\underline{\Hom}_\A(-,C)}
\end{displaymath}
is exact.
\end{defn}
\begin{thm}\thlabel{bang}For any $F\in\fp(\A^\op,\Ab)$, the following are equivalent.
\begin{enumerate}
\item $F\in\fp_!(\A^\op,\Ab)$.
\item For any exact sequence 
\begin{displaymath}
\xymatrix{0\ar[r]&A\ar[r]^f&B\ar[r]^g&C}
\end{displaymath}
in $\A$ which gives a projective presentation
\begin{displaymath}
\xymatrix{0\ar[r]&\Hom_\A(-,A)\ar[r]^{f_*}&\Hom_\A(-,B)\ar[r]^{g_*}&\Hom_\A(-,C)\ar[r]&F\ar[r]&0}
\end{displaymath}
of $F$, $\underline{g}:\underline{B}\to\underline{C}$ is a split epimorphism in $\underline{\A}$ and $\underline{f}:\underline{A}\to\underline{B}$ is a weak kernel of $\underline{g}:\underline{B}\to \underline{C}$.
\item There is an exact sequence
\begin{displaymath}
\xymatrix{0\ar[r]&A\ar[r]^f&B\ar[r]^g&C}
\end{displaymath}
in $\A$ which gives a projective presentation
\begin{displaymath}
\xymatrix{0\ar[r]&\Hom_\A(-,A)\ar[r]^{f_*}&\Hom_\A(-,B)\ar[r]^{g_*}&\Hom_\A(-,C)\ar[r]&F\ar[r]&0}
\end{displaymath}
of $F$ such that $\underline{g}:\underline{B}\to\underline{C}$ is a split epimorphism in $\underline{\A}$ and $\underline{f}:\underline{A}\to\underline{B}$ is a weak kernel of $\underline{g}:\underline{B}\to \underline{C}$.
\item $(F,G)\cong\Ext^1(F,G)=0$ for any additive functor $G:\A^\op\to\Ab$ which vanishes on projectives in $\A$.
\item $(F,\Ext^1_\A(-,X))\cong\Ext^1(F,\Ext^1_\A(-,X))\cong 0$ for any $X\in\A$.
\item  $(F,G)\cong\Ext^1(F,G)=0$ for any $G\in\fp_0(\A^\op,\Ab)$.
\end{enumerate}
\end{thm}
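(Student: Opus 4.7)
The plan is to prove the six conditions equivalent by a two-pronged chain $(1)\Leftrightarrow(2)\Leftrightarrow(3)$ and $(1)\Leftrightarrow(4)\Leftrightarrow(5)\Leftrightarrow(6)$, meeting at $(1)$. The workhorse throughout is that whenever $0\to A\to B\to C$ is exact in $\A$ and gives a projective presentation of $F$, then $\Hom_\A(-,A)$ is automatically the kernel of $g_*$ in $\fp(\A^\op,\Ab)$ because $Y$ preserves kernels, so we actually have a length-two projective resolution
\begin{displaymath}
\xymatrix{0\ar[r]&\Hom_\A(-,A)\ar[r]^{f_*}&\Hom_\A(-,B)\ar[r]^{g_*}&\Hom_\A(-,C)\ar[r]&F\ar[r]&0.}
\end{displaymath}
Since representables are projective in the ambient functor category too, this single resolution simultaneously computes $\LD_n((-)^0)(F)$ and the Ext groups $\Ext^n(F,G)$ against any additive $G$.

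For $(1)\Leftrightarrow(2)\Leftrightarrow(3)$, I would apply the right-exact functor $(-)^0$ to this resolution and use \thref{0hom} to rewrite it as the complex $\underline{\Hom}_\A(-,A)\to\underline{\Hom}_\A(-,B)\to\underline{\Hom}_\A(-,C)$, whose cokernel is $F^0$ and whose middle homology is $\LD_1((-)^0)(F)$. Thus $F\in\fp_!(\A^\op,\Ab)$ is equivalent to exactness at these two positions. Middle exactness is exactly the weak-kernel property of $\underline{f}$ over $\underline{g}$, while surjectivity of $\underline{g}_*$ at the end is equivalent to $\underline{g}$ being split epi in $\underline{\A}$: by \thref{stabproj}, $\underline{\Hom}_\A(-,C)$ is projective in $\fp_0(\A^\op,\Ab)$ and $\Hom(\underline{\Hom}_\A(-,C),\underline{\Hom}_\A(-,B))\cong\underline{\Hom}_\A(C,B)$, so lifting $1_{\underline{\Hom}_\A(-,C)}$ produces a splitting of $\underline{g}$. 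Because the analysis is insensitive to the choice of presentation, it gives $(1)\Rightarrow(2)$; then $(2)\Rightarrow(3)$ is trivial and $(3)\Rightarrow(1)$ is the same computation.

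The equivalence $(1)\Leftrightarrow(6)$ is a direct application of \thref{gllemma} to the defect recollement, where $\Img(i_*)=\fp_0(\A^\op,\Ab)$. For $(1)\Rightarrow(4)$ I would use \thref{bangchar} to pick an isomorphism $F\cong\LD_0(Y)(A)$, fix a projective presentation $P_1\to P_0\to A\to 0$ in $\A$, set $K=\Ker(P_1\to P_0)$, and observe that the resulting length-two resolution of $F$ has its outer representables at projective objects of $\A$; applying $\Hom(-,G)$ and Yoneda gives the cochain complex $G(P_0)\to G(P_1)\to G(K)$, which collapses to $0\to 0\to G(K)$ whenever $G$ vanishes on projectives, forcing $(F,G)=\Ext^1(F,G)=0$. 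Then $(4)\Rightarrow(5)$ is a specialisation, since $\Ext^1_\A(-,X)$ is additive and vanishes on projectives by construction. For $(5)\Rightarrow(6)$, take any $G\in\fp_0(\A^\op,\Ab)$ with corresponding short exact sequence $0\to A\to B\to C\to 0$; the induced Ext long exact sequence identifies $G$ with $\Ker(\Ext^1_\A(-,A)\to\Ext^1_\A(-,B))$, and writing $Q$ for the cokernel of $G\hookrightarrow\Ext^1_\A(-,A)$, which embeds as a subfunctor of $\Ext^1_\A(-,B)$, the long exact sequence obtained by applying $(F,-)$ to $0\to G\to\Ext^1_\A(-,A)\to Q\to 0$ collapses under $(5)$ to $(F,G)=0$ and $\Ext^1(F,G)\cong(F,Q)$; and $(F,Q)$ in turn embeds into $(F,\Ext^1_\A(-,B))=0$.

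The main obstacle I anticipate is the dictionary for $(1)\Leftrightarrow(2)/(3)$ between ``$\underline{g}_*$ is an epimorphism'' and ``$\underline{g}$ is a split epimorphism in $\underline{\A}$'', which depends on the Yoneda-style identity supplied by \thref{stabproj}. The other slightly delicate step is $(5)\Rightarrow(6)$, where $\Ext^1(F,G)$ must be pinned down via two nested long exact sequences rather than a single direct embedding.
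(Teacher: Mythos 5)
Your proposal is correct, and it reaches the goal by a partly different route than the paper. The equivalence of items 1, 2 and 3 is handled essentially as in the paper (apply $(-)^0$ to the length-two representable resolution, identify the cokernel with $F^0$ and the middle homology with $\LD_1((-)^0)F$ via \thref{0hom}; your justification of the dictionary ``$\underline{g}_*$ epi iff $\underline{g}$ split epi'' through \thref{stabproj} is a harmless elaboration of what the paper leaves implicit), and your link between items 1 and 6 is the same appeal to \thref{gllemma}. Where you genuinely diverge is in the remaining legs. The paper proves $3\Rightarrow 4$ by viewing $G$ as a functor on the stable category $\underline{\A}$ and applying the Yoneda lemma there to the exact sequence of stable hom functors, and it proves $5\Leftrightarrow 6$ by applying the duality $W$ to $0\to\Ext^1(C,-)\to\Ext^1(B,-)\to\Ext^1(A,-)$ and then using projectivity of the stable homs in $\fp_0(\A^\op,\Ab)$. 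You instead prove $1\Rightarrow 4$ directly from the special resolution supplied by \thref{bangchar}, whose outer representables sit at projective objects of $\A$, so that $\Hom(-,G)$ collapses immediately; and you prove $5\Rightarrow 6$ by realising any $G\in\fp_0(\A^\op,\Ab)$ as $\Ker(\Ext^1_\A(-,A)\to\Ext^1_\A(-,B))$ for the associated short exact sequence $0\to A\to B\to C\to 0$ and running two long exact sequences in the second variable, which avoids the duality $W$ altogether. Both routes are valid: the paper's argument yields the sharper pointwise equivalences ($5\Leftrightarrow 6$ directly, and item 4 deduced from an arbitrary presentation), while yours is more elementary, trades $W$ for standard homological bookkeeping, and closes the equivalence as the cycle $1\Rightarrow 4\Rightarrow 5\Rightarrow 6\Rightarrow 1$, which suffices.
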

\begin{proof}
By definition, $F\in\fp_!(\A^\op,\Ab)$ if and only if $F^0=0$ and $\LD_1((-)^0)(F)=0$. Given a presentation \begin{displaymath}
\xymatrix{\Hom_\A(-,B)\ar[r]^{g_*}&\Hom_\A(-,C)\ar[r]&F\ar[r]&0}
\end{displaymath}
of $F$, $F^0$ is the cokernel of the induced morphism 
\begin{displaymath}
\xymatrix{\underline{\Hom}_\A(-,B)\ar[r]^{\underline{g}_*}& \underline{\Hom}_\A(-,C)}
\end{displaymath}
and $\LD_1((-)^0)(F)$ is the homology of the sequence
\begin{displaymath}
\xymatrix{\underline{\Hom}_\A(-,A)\ar[r]^{\underline{f}_*}&\underline{\Hom}_\A(-,B)\ar[r]^{\underline{g}_*}& \underline{\Hom}_\A(-,C).}
\end{displaymath}
Both of these are zero if and only if $\underline{g}$ is a split epimorphism in $\underline{\A}$ and $\underline{f}$ is a weak kernel of $\underline{g}$ in $\underline{\A}$. Thus item 1 implies item 2, item 2 implies item 3, and item 3 implies item 1.

Suppose item 3 holds. Let $G:\A^\op\to\Ab$ be an additive functor which vanishes on projectives in $\A$. Since $G$ obviously vanishes on morphisms which factor through a projective, $G$ corresponds to a functor $H:\underline{\A}^\op\to\Ab$. Since the sequence
\begin{displaymath}
\xymatrix{\underline{\Hom}_\A(-,A)\ar[r]^{\underline{f}_*}&\underline{\Hom}_\A(-,B)\ar[r]^{\underline{g}_*}&\underline{\Hom}_\A(-,C)\ar[r]&0}
\end{displaymath}
is exact, viewing this as a sequence of functors $\underline{\A}^\op\to\Ab$,
\begin{displaymath}
\xymatrix{0\ar[r]&(\underline{\Hom}_\A(-,C),H)\ar[r]^{(\underline{g}_*,H)}&(\underline{\Hom}_\A(-,B),H)\ar[r]^{(\underline{f}_*,H)}&(\underline{\Hom}_\A(-,A),H)}
\end{displaymath}
is exact. By the Yoneda lemma (for additive functors $\underline{\A}^\op\to\Ab$), this means that the induced sequence
\begin{displaymath}
\xymatrix{0\ar[r]&GC\ar[r]^{Gg}\ar[r]&GB\ar[r]^{Gf}&GA}
\end{displaymath}
is exact, which is equivalent to the statement that $(F,G)\cong \Ext^1(F,G)=0$. Therefore, item 3 implies item 4.

Clearly item 4 implies item 5.

Item 5 holds if and only if
\begin{displaymath}
\xymatrix{0\ar[r]&\Ext^1(C,-)\ar[r]&\Ext^1(B,-)\ar[r]&\Ext^1(A,-)}
\end{displaymath}
is exact, which, by applying the duality $W$, is equivalent to the statement that 
\begin{displaymath}
\xymatrix{\underline{\Hom}_\A(-,A)\ar[r]^{\underline{f}_*}&\underline{\Hom}_\A(-,B)\ar[r]^{\underline{g}_*}&\underline{\Hom}_\A(-,C)\ar[r]&0}
\end{displaymath}
is exact. In turn, this is equivalent to the statement that the induced sequence
\begin{displaymath}
\xymatrix{0\ar[r]&(\underline{\Hom}_\A(-,A),G)\ar[r]&(\underline{\Hom}_\A(-,B),G)\ar[r]&(\underline{\Hom}_\A(-,C),G)}
\end{displaymath}
is exact for each $G\in\fp_0(\A^\op,\Ab)$, and by \thref{stabproj} this is equivalent to the statement that the sequence
\begin{displaymath}
\xymatrix{0\ar[r]&GC\ar[r]^{Gg}&GB\ar[r]^{Gf}&GA}
\end{displaymath}
is exact. By the Yoneda lemma, this is equivalent to the statement that 
$$(F,G)\cong\Ext^1(F,G)=0.$$
Therefore, item 5 is equivalent to item 6. 

Item 6 is equivalent to item 1 by \thref{gllemma}.
\end{proof}
\begin{defn}Let $\C$ be an abelian category with enough projectives, and let $\U,\V\subseteq\C$ be full subcategories of $\C$. We say that $(\U,\V)$ is a \textbf{perpendicular pair on $\C$} if 
\begin{displaymath}
\U=\{C\in\C:\Hom_\C(C,\V)=0\text{ and }\Ext^1_\C(C,\V)=0\}
\end{displaymath}
and 
\begin{displaymath}
\V=\{C\in\C:\Hom_\C(\U,C)=0\text{ and }\Ext^1_\C(\U,C)=0\}.
\end{displaymath}
\end{defn}
Perpendicular pairs were first introduced by Geigle and Lenzing in \cite{gl} in order to study localisating subcategories.
\begin{rmk}
In \cite{auslander1965}, Auslander showed $(\fp_0(\A^\op,\Ab),\fp_*(\A^\op,\Ab))$ is a perpendicular pair on $\fp(\A^\op,\Ab)$, where $\fp_*(\A^\op,\Ab)$ is the full subcategory of representable functors.
\end{rmk}
\begin{prop}In the notation of \thref{recoldef}, for any recollement situations with enough projectives, $(\C_!,\Img(i^*))$ is a perpendicular pair on $\C$.
\end{prop}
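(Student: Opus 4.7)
The plan is to verify the two defining conditions of a perpendicular pair separately. The first one,
$$\C_!=\{C\in\C:\Hom_\C(C,\Img(i_*))=0\text{ and }\Ext^1_\C(C,\Img(i_*))=0\},$$
is just a rewording of \thref{gllemma}, so there is nothing new to prove there. All the work lies in establishing the dual equality
$$\Img(i_*)=\{C\in\C:\Hom_\C(\C_!,C)=0\text{ and }\Ext^1_\C(\C_!,C)=0\}.$$

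The essential tool is \thref{factorial}, which identifies $\C_!=\Img(j_!)$, so the orthogonality condition on $\C_!$ reduces to a condition on objects of the form $j_!C''$ for $C''\in\C''$. For the inclusion $\subseteq$, I would fix $C'\in\C'$ and verify that $\Hom_\C(j_!C'',i_*C')=0$ and $\Ext^1_\C(j_!C'',i_*C')=0$ for every $C''\in\C''$. The Hom vanishing is immediate from the adjunction $j_!\dashv j^*$ and the identity $j^*i_*=0$. For the Ext vanishing I would pick a short exact sequence $0\to K\to P\to j_!C''\to 0$ in $\C$ with $P$ projective, compute $\Ext^1_\C(j_!C'',i_*C')$ as the cokernel of $\Hom_\C(P,i_*C')\to\Hom_\C(K,i_*C')$, convert via $i^*\dashv i_*$ into the cokernel of $\Hom_{\C'}(i^*P,C')\to\Hom_{\C'}(i^*K,C')$, and then show that $i^*K\to i^*P$ is an isomorphism by examining the long exact sequence
$$\LD_1(i^*)(j_!C'')\to i^*K\to i^*P\to i^*j_!C''\to 0$$
and invoking the vanishings $i^*j_!=0$ and $\LD_1(i^*)j_!=0$ supplied by \thref{counitstuff}.

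For the reverse inclusion $\supseteq$, the Hom half alone suffices. Given $C\in\C$ with $\Hom_\C(\C_!,C)=0$, I would examine the counit $\zeta_C:j_!j^*C\to C$. Since $j_!j^*C\in\Img(j_!)=\C_!$, the hypothesis forces $\zeta_C=0$. Because $j_!$ is fully faithful, the unit $1\to j^*j_!$ of $j_!\dashv j^*$ is an isomorphism, and a triangular identity then makes $j^*\zeta$ an isomorphism; applying $j^*$ to $\zeta_C=0$ therefore gives $j^*C=0$, so $C\in\Ker(j^*)=\Img(i_*)$.

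The main technical obstacle is the Ext vanishing in the $\subseteq$ direction. Everything else is a short adjunction or fully-faithfulness argument, but the Ext computation requires a careful use of \thref{counitstuff}, and in particular the nontrivial input $\LD_1(i^*)j_!=0$, together with the fact that $j_!C''$ is usually not projective in $\C$. Once that computation is in place, both inclusions assemble cleanly.
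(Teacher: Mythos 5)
Your argument is correct and is essentially the detailed version of what the paper intends when it calls the result an easy consequence of \thref{gllemma} and the recollement situation: \thref{gllemma} supplies one of the two defining equalities, and the adjunction/counit arguments you give supply the other. The only redundancy is your $\Ext^1_\C(j_!C'',i_*C')$ computation in the inclusion $\subseteq$, since \thref{gllemma} already gives both vanishings for every object of $\C_!$ against every object of $\Img(i_*)$.
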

\begin{proof}
Easy consequence of \thref{gllemma} and the recollement situation.
\end{proof}
\begin{cor}\thlabel{injtor}$(\fp_!(\A^\op,\Ab),\fp_0(\A^\op,\Ab))$ is a perpendicular pair on $\fp(\A^\op,\Ab)$.
\end{cor}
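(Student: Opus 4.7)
The plan is simply to specialise the preceding proposition to the defect recollement. To apply it I first need to confirm that the defect recollement is a recollement situation with enough projectives: $\A$ has enough projectives by the standing hypothesis of the paper, $\fp(\A^\op,\Ab)$ has enough projectives (namely the representable functors), and $\fp_0(\A^\op,\Ab)$ has enough projectives of the form $\underline{\Hom}_\A(-,A)$ by \thref{stabproj}. So all three categories in the defect recollement have enough projectives.

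Next I read off the relevant identifications. In the defect recollement the fully faithful functor $i_*$ is the inclusion $\fp_0(\A^\op,\Ab) \hookrightarrow \fp(\A^\op,\Ab)$, so $\Img(i_*) = \fp_0(\A^\op,\Ab)$. By the definition given just before \thref{bangchar}, $\fp_!(\A^\op,\Ab)$ is nothing but $\C_!$ for $\C = \fp(\A^\op,\Ab)$ in this recollement. Substituting these identifications into the conclusion of the preceding proposition delivers exactly the assertion of the corollary.

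As a sanity check, one half of the perpendicular pair condition, namely that $\fp_!(\A^\op,\Ab)$ equals the left orthogonal of $\fp_0(\A^\op,\Ab)$ under $\Hom$ and $\Ext^1$, is precisely the equivalence of items 1 and 6 in \thref{bang}, which has already been proved directly. The other half, that $\fp_0(\A^\op,\Ab)$ is cut out inside $\fp(\A^\op,\Ab)$ by orthogonality to $\fp_!(\A^\op,\Ab)$, is what the preceding proposition contributes on top of this, and follows from \thref{gllemma} together with the recollement structure. Hence there is no substantive obstacle; the corollary is just the packaging of results that are already in place.
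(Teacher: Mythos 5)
Your proposal is correct and matches the paper's (implicit) argument: the corollary is stated as an immediate specialisation of the preceding proposition to the defect recollement, with $\Img(i_*)=\fp_0(\A^\op,\Ab)$ and $\C_!=\fp_!(\A^\op,\Ab)$, the enough-projectives hypothesis being supplied by \thref{stabproj} exactly as you note. Your sanity check via \thref{bang} is consistent with how the paper's surrounding results fit together, so there is nothing to add.
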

We now prove the result which we hinted at in the beginning of this section.
\begin{cor}\thlabel{hercor}If $\A$ is hereditary then $\Img(\LD_0(Y))=\Ker((-)^0)$. In particular, if $\A$ is hereditary then $\A\simeq \Ker((-)^0)$. 
\end{cor}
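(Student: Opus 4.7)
The plan is to assemble the statement directly from the results already established in this section, without needing any new construction. The key observation is that when $\A$ is hereditary, the defining conditions of $\fp_!(\A^\op,\Ab)$ degenerate: by item 2 of \thref{zeroex}, the functor $(-)^0$ is exact, so $\LD_1((-)^0)=0$ identically. Consequently the condition $\LD_1((-)^0)(F)=0$ is automatic, and
\[
\fp_!(\A^\op,\Ab)=\{F\in\fp(\A^\op,\Ab):F^0=0\}=\Ker((-)^0).
\]

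Next I would invoke \thref{bangchar}, which characterises $\fp_!(\A^\op,\Ab)$ as $\Img(\LD_0(Y))$ via the equivalence of its items 1 and 2. Combining this with the previous identification yields $\Img(\LD_0(Y))=\Ker((-)^0)$, which is the first assertion.

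For the second assertion, I would cite \thref{factorial}, which asserts in the general setting of a recollement situation with enough projectives that the left adjoint $j_!$ induces an equivalence $\C''\to\C_!$. Specialising this to the defect recollement gives an equivalence $\LD_0(Y):\A\to\fp_!(\A^\op,\Ab)$, which under the identification above becomes $\A\simeq \Ker((-)^0)$, as required.

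There is no substantive obstacle here: all the technical work was done in proving \thref{zeroex}, \thref{bangchar}, and \thref{factorial}. The only thing to be careful about is that the hereditary hypothesis is what turns the two-condition definition of $\fp_!(\A^\op,\Ab)$ into the single condition $F^0=0$; without this vanishing of $\LD_1((-)^0)$, the inclusion $\fp_!(\A^\op,\Ab)\subseteq\Ker((-)^0)$ is in general strict.
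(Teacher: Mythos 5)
Your proposal is correct and follows essentially the same route as the paper's proof: the identification $\Ker((-)^0)=\fp_!(\A^\op,\Ab)$ from the exactness of $(-)^0$ (\thref{zeroex}) in the hereditary case, combined with the equivalence $\A\simeq\fp_!(\A^\op,\Ab)$ induced by $\LD_0(Y)$ from \thref{factorial}/\thref{bangchar}.
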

\begin{proof}The functor $\A\to\fp_!(\A^\op,\Ab):A\mapsto \LD_0(Y)A$ is an equivalence. If $\A$ is hereditary then $\LD_1((-)^0)=0$ so $\Ker((-)^0)=\fp_!(\A^\op,\Ab)$.
\end{proof}
Since finitely presented functors $\A^\op\to\Ab$ correspond to left exact sequences in $\A$, part of the motivation for studying them is to read our results in the category of left exact sequences and homotopy classes of chain maps. The following is an example of this.
\begin{cor}\thlabel{morphisms}For any exact sequence 
\begin{displaymath}
\xymatrix{0\ar[r]&A\ar[r]^f&B\ar[r]^g&C}
\end{displaymath}
the following are equivalent.
\begin{enumerate}
\item $\underline{g}:\underline{B}\to\underline{C}$ is a split epimorphism in $\underline{\A}$ and $\underline{f}:\underline{A}\to \underline{B}$ is a weak kernel of $\underline{g}$ in $\underline{\A}$.
\item The complex
\begin{displaymath}
\xymatrix{0\ar[r]&A\ar[r]^f&B\ar[r]^g&C\ar[r]&0}
\end{displaymath}
is homotopic to a complex
\begin{displaymath}
\xymatrix{0\ar[r]&K\ar[r]^k&P_1\ar[r]^d&P_0\ar[r]&0}
\end{displaymath}
which is exact at $P_1$ and $K$, where $P_0$ and $P_1$ are projective.
\end{enumerate}
\begin{proof}
Item 1 says that the exact sequence induces a projective presentation of a functor in $\fp_!(\A^\op,\Ab)$, by \thref{bang}. Item 2 says that the exact sequence induces a projective presentation of a functor in the image of $\LD_0(Y)$, which is equivalent to item 1, by \thref{bangchar}.
\end{proof}
\end{cor}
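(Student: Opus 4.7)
The plan is to associate to the given exact sequence a single finitely presented functor and translate each of items 1 and 2 into the statement that it lies in $\fp_!(\A^\op,\Ab)$. Since $\Hom_\A(X,-)$ is left exact for every $X \in \A$, the exactness of $0 \to A \xrightarrow{f} B \xrightarrow{g} C$ yields an exact sequence
\[
0 \to \Hom_\A(-,A) \xrightarrow{f_*} \Hom_\A(-,B) \xrightarrow{g_*} \Hom_\A(-,C) \to F \to 0
\]
in $\fp(\A^\op,\Ab)$, where $F := \Coker(g_*)$. This is a projective presentation of $F$ in the sense used in both \thref{bang} and \thref{bangchar}.

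Item 1 of the corollary is verbatim the condition attached to the projective presentation in \thref{bang} item 3, so its truth implies $F \in \fp_!(\A^\op,\Ab)$; conversely, \thref{bang} item 2 shows that if $F \in \fp_!(\A^\op,\Ab)$, then every such presentation of $F$ satisfies the weak-kernel/split-epi condition, in particular the one we built. Hence item 1 is equivalent to $F \in \fp_!(\A^\op,\Ab)$. For item 2, I would appeal to \thref{bangchar}, which characterises $F \in \fp_!(\A^\op,\Ab)$ by the existence of a projective presentation
\[
0 \to \Hom_\A(-,K) \to \Hom_\A(-,P_1) \to \Hom_\A(-,P_0) \to F \to 0
\]
coming from a left exact sequence $0 \to K \to P_1 \to P_0$ with $P_0, P_1$ projective; item 2 asserts precisely that the original three-term complex is chain-homotopy equivalent to such a projective complex.

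The main obstacle is the homotopy correspondence implicit in the last sentence above: one must know that the cokernel-of-$g_*$ construction descends to chain-homotopy classes of three-term left exact complexes, and that conversely two such complexes producing isomorphic finitely presented functors are chain-homotopy equivalent. The first direction is routine, since the Yoneda embedding is additive, so a null-homotopy between chain maps of three-term complexes becomes a null-homotopy of the induced maps between projective presentations, which becomes the zero morphism on cokernels. The converse is the standard comparison theorem applied to the partial projective resolution $0 \to K \to P_1 \to P_0 \to F \to 0$: the projectivity of $P_0$ and $P_1$ allows one to lift a chosen isomorphism of functors to chain maps between the two complexes, and standard uniqueness-up-to-homotopy arguments show these form a chain-homotopy equivalence. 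This is the same mechanism underlying the equivalence $\fp_0(\A^\op,\Ab) \simeq \Ses(\A)$ cited earlier in the paper, and it extends without change to left exact three-term complexes.
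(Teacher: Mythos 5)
Your proposal is correct and follows essentially the same route as the paper: identify $F=\Coker(g_*)$, use \thref{bang} to translate item 1 into $F\in\fp_!(\A^\op,\Ab)$, and use \thref{bangchar} to translate item 2 into $F$ lying in the image of $\LD_0(Y)$. The only difference is that you spell out the homotopy-invariance/comparison-theorem argument (via projectivity of representables in $\fp(\A^\op,\Ab)$ and full faithfulness of Yoneda) that the paper leaves implicit, which is a faithful elaboration rather than a different proof.
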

\section{!\textasteriskcentered}
In this section, we will discuss the functor which sends any object $A\in\A$ to $\PP(-,A)$, show that it is fully faithful, and give orthogonality properties of its image. 
\begin{defn}In the notation of \thref{recoldef}, we introduce a functor $j_{!*}=\Img N$, the image of the norm $N:j_!\to j_*$.
\end{defn}
\begin{rmk}Given a recollement of abelian categories 
\begin{displaymath}
\xymatrix{\mathcal{C}'\ar[rr]|{i_*}&&\mathcal{C}\ar@<2.5ex>[ll]^{i^!}\ar@<-2.5ex>[ll]_{i^*}\ar[rr]|{j^*}&&\mathcal{C}'',\ar@<2.5ex>[ll]^{j_*}\ar@<-2.5ex>[ll]_{j_!}}\end{displaymath} let $\epsilon:j^*j_*\to 1_{\C'}$ and $\zeta:j_!j^*\to 1_\C$ be the counits, and let $\eta:1_\C\to j_*j^*$ and $\theta:1_{\C'}\to j^*j_!$ be the units. Note that $\epsilon$ and $\theta$ are isomorphisms, because $j_!$ and $j_*$ are fully faithful, and, due to the adjunctions, we have the ``triangle identities"
\begin{displaymath}
(\epsilon j^*)(j^* \eta)=1_{j^*}=(j^* \zeta)(\theta j^*).
\end{displaymath}
Recall that the norm is defined by $N=(\zeta j_*)(j_!\epsilon^{-1})$. One could also define the ``conorm" $M=(j_*\theta^{-1})(\eta j_!)$, which would be the dual concept. However, by using the triangle identities, and the naturality of $\eta$ and $\zeta$, we obtain
\begin{align*}
Nj^*&=(\zeta j_*j^*)(j_!\epsilon^{-1}j^*)
\\&=(\zeta j_*j^*)(j_!j^*\eta)
\\&=\eta\zeta
\\&=(j_*j^*\zeta)(\eta j_!j^*)
\\&=(j_*\theta^{-1}j^*)(\eta j_!j^*)
\\&=Mj^*.
\end{align*}
Finally, since every object of $\C''$ is in the image of $j^*$, 
\begin{displaymath}
N=M.
\end{displaymath}
Therefore, the norm of a recollement is self-dual. 
\end{rmk}
\begin{thm}\thlabel{factorialstar}\emph{\cite[Proposition 4.11]{comprecoll}} In the notation of \thref{recoldef}, for any recollement of abelian categories, if
\begin{displaymath}
\C_{!*}\defeq\{C\in\C:i^*C=0\text{ and }i^!C= 0\},
\end{displaymath}
then functor $j_{!*}:\C''\to\C$ induces an equivalence $\C''\to\C_{!*}$ with inverse $j^*|_{\C_{!*}}:\C_{!*}\to \C''$. In particular, $j_{!*}$ is fully faithful.
\end{thm}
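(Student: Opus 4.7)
The plan is to verify three things, which together will give the claimed equivalence with $j^*|_{\C_{!*}}$ as quasi-inverse (and in particular force $j_{!*}$ to be fully faithful): (i) $j_{!*}X \in \C_{!*}$ for every $X \in \C''$; (ii) $j^* j_{!*} \cong 1_{\C''}$ naturally; and (iii) for every $C \in \C_{!*}$ there is a natural isomorphism $j_{!*}j^*C \cong C$. Throughout I will exploit the image factorisation of $N_X$, which gives two short exact sequences
\begin{displaymath}
\xymatrix{0\ar[r] & \Ker N_X \ar[r] & j_! X \ar[r] & j_{!*}X \ar[r] & 0}
\end{displaymath}
and
\begin{displaymath}
\xymatrix{0\ar[r] & j_{!*}X \ar[r] & j_*X \ar[r] & \Coker N_X \ar[r] & 0.}
\end{displaymath}

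For (i), I would apply the right exact $i^*$ to the first sequence and use $i^*j_! = 0$ from \thref{counitstuff} to conclude $i^*j_{!*}X = 0$; dually, applying the left exact $i^!$ to the second and using the dual $i^!j_* = 0$ gives $i^!j_{!*}X = 0$. For (ii), exactness of $j^*$ yields $j^* j_{!*}X = \Img(j^*N_X)$, so it suffices to show $j^*N_X$ is an isomorphism. Unwinding $N = (\zeta j_*)(j_!\epsilon^{-1})$, applying $j^*$, and combining the triangle identity $(j^*\zeta)(\theta j^*) = 1_{j^*}$ with naturality of $\theta$, I expect $j^*N_X$ to reduce to the composite $\epsilon^{-1}_X \circ \theta^{-1}_X$, which is an isomorphism because both $\theta$ and $\epsilon$ are.

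For (iii), for $C \in \C_{!*}$ the exact sequence of \thref{counitstuff} forces $\zeta_C : j_!j^*C \to C$ to be an epimorphism (because $i^*C = 0$ and the sequence terminates in $i_*i^*C$), and its dual forces $\eta_C : C \to j_*j^*C$ to be a monomorphism (because $i^!C = 0$). The key identity is $N_{j^*C} = \eta_C \circ \zeta_C$: starting from $N_{j^*C} = \zeta_{j_* j^* C} \circ j_!\epsilon^{-1}_{j^*C}$, the triangle identity $(\epsilon j^*)(j^*\eta) = 1_{j^*}$ rewrites $\epsilon^{-1}_{j^*C}$ as $j^*\eta_C$, and then naturality of $\zeta$ applied to $\eta_C$ produces the composite $\eta_C \circ \zeta_C$. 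Then $j_{!*}j^*C = \Img(\eta_C \circ \zeta_C) = \eta_C(\Img \zeta_C) = \eta_C(C)$, and because $\eta_C$ is mono this is canonically isomorphic to $C$ via $\eta_C$ itself.

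The routine remaining task is to check that the isomorphisms in (ii) and (iii) are mutually inverse natural transformations, which follows by diagram chasing from the constructions above. The main technical obstacle is the identity $N_{j^*C} = \eta_C \circ \zeta_C$: it is the structural input that makes $j_{!*}$ collapse onto $C$ when restricted to $\C_{!*}$, and establishing it cleanly requires juggling both triangle identities (from $j_! \dashv j^*$ and from $j^* \dashv j_*$) together with naturality of the counit $\zeta$, rather than any genuinely new idea.
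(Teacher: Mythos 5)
Your proposal is correct and follows essentially the same route as the paper: show $j^*N$ is an isomorphism (so $j^*j_{!*}\cong 1_{\C''}$), use $i^*j_!=0$ and its dual applied to the image factorisation of $N$ to see $j_{!*}$ lands in $\C_{!*}$, and for $C\in\C_{!*}$ use the identity $Nj^*=\eta\zeta$ together with $\zeta_C$ epi and $\eta_C$ mono to get $j_{!*}j^*C\cong C$. The key identity $Nj^*=\eta\zeta$, which you re-derive from the triangle identities and naturality of $\zeta$, is exactly the computation the paper carries out in the remark on the self-duality of the norm preceding the theorem.
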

\begin{proof}
Note that $j^*N$ is an isomorphism, so $j^*j_{!*}\cong j^*j_!\cong j^*j_*\cong 1_{\C''}$.

If $C\in\C$ is in the image of $j_{!*}$ then it is a quotient of an object in $\Img(j_!)$, which implies that $i^*C=0$ since $i^*j_!=0$. Dually, $i^!C=0$. Therefore, $j_{!*}$ induces a functor $\C''\to\C_{!*}$.

Now suppose $i^*C=0$ and $i^!C=0$. By \thref{counitstuff}, the component $\zeta_C:j_!j^*C\to C$ of the counit $\zeta:j_!j^*\to 1_{\C}$ is an epimorphism, and the component $\eta_C:C\to j_*j^*C$ of the unit $\eta:1_\C\to j_*j^*$ is a monomorphism. Since $Nj^*=\eta\zeta$, there is an isomorphism
\begin{displaymath}
j_{!*}j^*C\cong\Img (N_{j^*C})\cong C
\end{displaymath}
which is natural in $C$.
\end{proof}
\begin{prop}For the defect recollement, $j_{*!}A= \PP(-,A)$, for any $A\in\A$. In particular, the functor $\A\to\fp(\A^\op,\Ab):A\mapsto \PP(-,A)$ is fully faithful.
\end{prop}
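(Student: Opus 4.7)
The plan is to unwind the definition $j_{!*}A=\Img N_A$ in the defect recollement by evaluating the norm on a projective presentation of $A$. In the defect recollement one has $j_!=\LD_0(Y)$ and $j_*=Y$, and (as noted just before \thref{mvtest}) the norm $N:\LD_0(Y)\to Y$ is pinned down as the unique natural transformation that is the identity on representable-functor projectives. So the task reduces to computing the image of $N_A:\LD_0(Y)A\to \Hom_\A(-,A)$.

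To do this, I would choose a projective presentation $P_1\to P_0\xrightarrow{\pi} A\to 0$ in $\A$. Since $\LD_0(Y)$ is right exact and agrees with $Y$ on projectives, there is an exact sequence $\Hom_\A(-,P_1)\to \Hom_\A(-,P_0)\to \LD_0(Y)A\to 0$. Naturality of $N$ applied to the morphism $\pi:P_0\to A$, combined with $N_{P_0}=1_{\Hom_\A(-,P_0)}$, forces $N_A$ to be the unique morphism $\LD_0(Y)A\to \Hom_\A(-,A)$ induced from $\pi_*:\Hom_\A(-,P_0)\to \Hom_\A(-,A)$ by the universal property of the cokernel. Therefore $\Img N_A=\Img\pi_*$.

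The remaining identification $\Img\pi_*=\PP(-,A)$ follows from the lifting property of projectives. At any $X\in\A$, the elements of $(\Img\pi_*)(X)$ are precisely the morphisms $X\to A$ which factor through the projective $P_0$, and these are obviously in $\PP(X,A)$. Conversely, a morphism $X\to A$ factoring as $X\to P\to A$ through any projective $P$ lifts (using projectivity of $P$ and the epimorphism $\pi$) to a factorisation $X\to P\to P_0\to A$, which lies in $\Img\pi_*$. This gives $j_{!*}A\cong \PP(-,A)$, and full faithfulness of $A\mapsto \PP(-,A)$ then follows immediately from \thref{factorialstar}.

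I expect no real obstacle here; the substantive step is the lifting argument identifying $\Img\pi_*$ with $\PP(-,A)$, which is routine once the epimorphism $\pi:P_0\to A$ is chosen. The only thing to be careful about is keeping track of the fact that $N_A$ on a non-projective $A$ is determined, via naturality, by its values on the chosen projective cover — this is exactly why the image is forced to coincide with the image of $\pi_*$.
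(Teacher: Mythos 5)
Your proposal is correct and follows essentially the same route as the paper: evaluate the norm $N_A:\LD_0(Y)A\to\Hom_\A(-,A)$ on a projective presentation, use that $N$ is the identity on representables to identify $\Img N_A$ with $\Img\pi_*$, and note (as the paper does in Section \ref{mv}) that $\Img\pi_*=\PP(-,A)$ for an epimorphism $\pi:P_0\to A$ from a projective, with full faithfulness coming from \thref{factorialstar}.
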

\begin{proof}
The norm is a natural transformation $N:\LD_0Y\to Y$ which is the identity $\Hom_\A(-,P)\to\Hom_\A(-,P)$ for any projective $P\in\A$. Let an object $A\in\A$ be given and suppose
\begin{displaymath}
\xymatrix{P_1\ar[r]&P_0\ar[r]&A\ar[r]&0}
\end{displaymath}
is the projective presentation of $A$. Therefore, there is a commutative diagram 
\begin{displaymath}
\xymatrix{\Hom_\A(-,P_1)\ar@{=}[d]\ar[r]&\Hom_\A(-,P_0)\ar@{=}[d]\ar[r]&(\LD_0Y)A\ar[d]^{N_A}\ar[r]&0\\
\Hom_\A(-,P_1)\ar[r]&\Hom_\A(-,P_0)\ar[r]&\Hom_\A(-,A)&}
\end{displaymath}
where the top row is exact. The image of $N_A$ is the image of the morphism $\Hom_\A(-,P_0)\to\Hom_\A(-,A)$, which is $\PP(-,A)$, as required.
\end{proof}
We define $\fp_{!*}(\A^\op,\Ab)=(\fp(\A^\op,\Ab))_{!*}$ with respect to the defect recollement. That is
\begin{displaymath}
\fp_{!*}(\A^\op,\Ab)=\{F\in\fp(\A^\op,\Ab):\exists A\in\A\text{ such that }F\cong\PP(-,A)\}.
\end{displaymath}
\begin{cor}\begin{displaymath}
\fp_{!*}(\A^\op,\Ab)=\{F\in\fp(\A^\op,\Ab):(\fp_0(\A^\op,\Ab),F)=0\text{ and }(F,\fp_0(\A^\op,\Ab))=0\}.
\end{displaymath}
\end{cor}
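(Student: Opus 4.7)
The plan is to derive this characterisation directly from \thref{factorialstar} together with the two defining adjunctions of the defect recollement, namely $(-)^0\dashv \subseteq \dashv (-)_0$.

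First I would invoke \thref{factorialstar} applied to the defect recollement. In our notation, $i^* = (-)^0$ and $i^! = (-)_0$, so \thref{factorialstar} immediately yields
\begin{displaymath}
\fp_{!*}(\A^\op,\Ab)=\{F\in\fp(\A^\op,\Ab):F^0=0\text{ and }F_0=0\}.
\end{displaymath}
The task is therefore to translate the pair of vanishing conditions $F^0=0$ and $F_0=0$ into the Hom-orthogonality conditions in the statement.

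Next I would use the adjunction $(-)^0\dashv i_*$, where $i_*$ is the inclusion of $\fp_0(\A^\op,\Ab)$. For every $G\in\fp_0(\A^\op,\Ab)$ this gives a natural isomorphism $(F,G)\cong (F^0,G)$. Hence $(F,\fp_0(\A^\op,\Ab))=0$ is equivalent to $(F^0,G)=0$ for all $G\in\fp_0(\A^\op,\Ab)$; since $F^0$ itself lies in $\fp_0(\A^\op,\Ab)$, taking $G=F^0$ shows this forces $F^0=0$, and the converse is trivial. Dually, the adjunction $i_*\dashv(-)_0$ gives $(G,F)\cong(G,F_0)$ for $G\in\fp_0(\A^\op,\Ab)$, and the same argument shows $(\fp_0(\A^\op,\Ab),F)=0$ is equivalent to $F_0=0$.

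Combining these two equivalences with the characterisation from \thref{factorialstar} yields exactly the claimed description. I do not expect any genuine obstacle here: the entire argument is a formal consequence of the recollement adjunctions, and the only care needed is to note that $F^0$ and $F_0$ themselves lie in $\fp_0(\A^\op,\Ab)$ so that the orthogonality conditions can be tested against them.
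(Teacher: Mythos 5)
Your proposal is correct and follows essentially the same route as the paper: both reduce the statement via \thref{factorialstar} to the conditions $F^0=0$ and $F_0=0$, and then translate these using the adjunction isomorphisms $(F,G)\cong(F^0,G)$ and $(G,F)\cong(G,F_0)$ for $G\in\fp_0(\A^\op,\Ab)$. Your extra remark that one may test against $G=F^0$ (resp.\ $G=F_0$) just makes explicit the step the paper leaves implicit.
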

\begin{proof}
By \thref{factorialstar}, the image of $j_{!*}$ consists of those functors $F\in\fp(\A^\op,\Ab)$ such that $F^0\cong F_0\cong 0$.

By the recollement situation, for any $F\in\fp(\A^\op,\Ab)$ and any $G\in\fp_0(\A^\op,\Ab)$,
\begin{align*}
(G,F)&\cong (G,F_0)\\(F,G)&\cong (F^0,G)
\end{align*}
so $(\fp_0(\A^\op,\Ab),F)=0$ if and only if $F_0=0$, and $(F,\fp_0(\A^\op,\Ab))=0$ if and only if $F^0=0$.
\end{proof}
\begin{prop}\thlabel{pinfp1}For any $A\in\A$ and any $F\in\fp(\A^\op,\Ab)$, there is a monomorphism 
\begin{displaymath}
FA/F_0A\to (\PP(-,A),F)
\end{displaymath}
which is natural in $A$ and $F$. For any $x\in FA$, this monomorphism sends the coset $x+F_0A\in FA/F_0A$ to $\phi_x|_{\PP(-,A)}$, where $\phi_x:\Hom_\A(-,A)\to F$ is the unique natural transformation such that $(\phi_{x})_A(1_A)=x$.

If the sequence
\begin{displaymath}
\xymatrix{F A\ar[r]& FP\ar[r]&F\Omega A,}
\end{displaymath}
induced by any exact sequence
\begin{displaymath}
\xymatrix{0\ar[r]&\Omega A\ar[r]^k&P\ar[r]^\pi&A\ar[r]&0}
\end{displaymath}
where $P$ is projective, is exact, then the monomorphism above is an isomorphism. (Note that this hypothesis doesn't depend on the choice of such a sequence, because it is equivalent to $\Ext^1(\underline{\Hom}_\A(-,A),F)=0$.)
\end{prop}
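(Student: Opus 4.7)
The plan is to apply the contravariant functor $(-,F) = \Hom_{\fp(\A^\op,\Ab)}(-,F)$ to the short exact sequence
\begin{displaymath}
\xymatrix{0\ar[r]&\PP(-,A)\ar[r]&\Hom_\A(-,A)\ar[r]&\underline{\Hom}_\A(-,A)\ar[r]&0}
\end{displaymath}
and read both claims off the induced long exact sequence. Since $\Hom_\A(-,A)$ is projective, $\Ext^1(\Hom_\A(-,A),F)=0$, so the long exact sequence truncates to
\begin{displaymath}
\xymatrix{0\ar[r]&(\underline{\Hom}_\A(-,A),F)\ar[r]&(\Hom_\A(-,A),F)\ar[r]&(\PP(-,A),F)\ar[r]&\Ext^1(\underline{\Hom}_\A(-,A),F)\ar[r]&0.}
\end{displaymath}
By Yoneda, $(\Hom_\A(-,A),F)\cong FA$ via $x\leftrightarrow \phi_x$. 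A natural transformation $\phi_x$ vanishes on $\PP(-,A)$ if and only if its precomposition with $\pi_*:\Hom_\A(-,P)\to\Hom_\A(-,A)$ is zero, and by Yoneda this precomposition corresponds to $(F\pi)x\in FP$. So the kernel of restriction is $\{x\in FA:(F\pi)x=0\}$, which is precisely $F_0A$ as described earlier in the paper. Substituting these identifications yields the monomorphism $FA/F_0A\hookrightarrow (\PP(-,A),F)$ with the explicit formula stated, naturality in $A$ and $F$ being inherited from Yoneda and the long exact sequence.

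For the second claim I need to show the hypothesis is equivalent to the vanishing of the cokernel $\Ext^1(\underline{\Hom}_\A(-,A),F)$. Given any short exact sequence $0\to \Omega A\to P\to A\to 0$ with $P$ projective, the induced sequence
\begin{displaymath}
\xymatrix{0\ar[r]&\Hom_\A(-,\Omega A)\ar[r]&\Hom_\A(-,P)\ar[r]&\Hom_\A(-,A)\ar[r]&\underline{\Hom}_\A(-,A)\ar[r]&0}
\end{displaymath}
is exact: the left map is monic because $\Omega A\to P$ is, exactness at $\Hom_\A(-,P)$ is left-exactness of $\Hom_\A(-,-)$, and the cokernel at $\Hom_\A(-,A)$ is $\underline{\Hom}_\A(-,A)$ by definition. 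This is a projective resolution of $\underline{\Hom}_\A(-,A)$ of length two. Applying $(-,F)$ and Yoneda produces the cochain complex $0\to FA\to FP\to F\Omega A\to 0$, whose cohomology at $FP$ is $\Ext^1(\underline{\Hom}_\A(-,A),F)$. Hence $\Ext^1(\underline{\Hom}_\A(-,A),F)=0$ if and only if $FA\to FP\to F\Omega A$ is exact at $FP$, which both delivers the sufficient condition for the monomorphism to be an isomorphism and proves the parenthetical independence from the choice of $(P,\pi)$.

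I do not anticipate any serious obstacle: the proof is essentially a bookkeeping exercise with Yoneda, the long exact sequence of $\Ext$, and the two SES's built from $\pi:P\to A$. The only place where one has to be slightly careful is checking that the restriction map $(\Hom_\A(-,A),F)\to (\PP(-,A),F)$ agrees, under the Yoneda isomorphism, with the explicit assignment $x\mapsto \phi_x|_{\PP(-,A)}$ and that its kernel is identified with the previously defined $F_0A$ rather than some abstract Ext-group.
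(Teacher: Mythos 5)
Your proof is correct, but it takes a genuinely different route from the paper's. The paper argues entirely at the level of elements: it defines $x\mapsto\phi_x|_{\PP(-,A)}$ directly, observes that $\phi_x|_{\PP(-,A)}=0$ iff $(F\pi)x=0$ (i.e.\ $x\in F_0A$) because every morphism in $\PP(B,A)$ factors through $\pi$, and then proves surjectivity under the exactness hypothesis by an explicit diagram chase that produces, for a given $\alpha:\PP(-,A)\to F$, an element $x\in FA$ with $\alpha_P(\pi)=(F\pi)x$ and checks $\alpha=\phi_x|_{\PP(-,A)}$. You instead apply $(-,F)$ to the short exact sequence $0\to\PP(-,A)\to\Hom_\A(-,A)\to\underline{\Hom}_\A(-,A)\to 0$ and to the projective resolution $0\to\Hom_\A(-,\Omega A)\to\Hom_\A(-,P)\to\Hom_\A(-,A)\to\underline{\Hom}_\A(-,A)\to 0$, reading off the monomorphism from the long exact sequence (using projectivity of the representable) and identifying its cokernel as $\Ext^1(\underline{\Hom}_\A(-,A),F)$, computed by the Yoneda-translated complex $FA\to FP\to F\Omega A$. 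Your route buys more: it proves the parenthetical claim of the proposition (independence of the choice of $\pi:P\to A$ and the equivalence of the hypothesis with $\Ext^1(\underline{\Hom}_\A(-,A),F)=0$), which the paper's proof merely asserts, and it exhibits the obstruction to surjectivity as a concrete $\Ext$ group; the paper's argument, in exchange, is more elementary and self-contained, needing only the Yoneda lemma and no $\Ext$ machinery. The places where you rightly flagged care are exactly the ones that matter and you handled them: the restriction map agrees with $x\mapsto\phi_x|_{\PP(-,A)}$ under Yoneda, and its kernel is $\{x\in FA:(F\pi)x=0\}=F_0A$ via the factorisation of $\pi_*$ through $\PP(-,A)$, which is the same computation the paper performs.
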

\begin{proof}
Consider the morphism 
\begin{displaymath}
FA\to (\PP(-,A),F):x\mapsto \phi_x|_{\PP(-,A)}.
\end{displaymath}
The natural transformation $\phi_x$ is defined by $(\phi_x)_B(f)=(Ff)x$ for any $B\in\A$ and any morphism $f\in\Hom_\A(B,A)$. 

For any object $B\in\A$ and any morphism $f\in\Hom_\A(B,A)$, $f\in\PP(B,A)$ if and only if $f=\pi g$ for some morphism $g\in\Hom_\A(B,P)$. Therefore, for any $x\in FA$, $\phi_x|_{\PP(-,A)}=0$ if and only if $(F\pi)x=0$, i.e. $x\in F_0A$, so the morphism $FA/F_0A\to(\PP(-,A),F):x\mapsto \phi_x|_{\PP(-,A)}$ is a monomorphism.

Suppose the given exactness condition holds. Let a natural transformation $\alpha:\PP(-,A)\to F$ be given. Then the diagram
\begin{displaymath}
\xymatrix{\PP(P,A)\ar[d]_{k^*}\ar[r]^{\alpha_P}&FP\ar[d]^{Fk}\\
\PP(\Omega A,A)\ar[r]_{\alpha_{\Omega A}}&F\Omega A}
\end{displaymath}
commutes so 
\begin{displaymath}
(Fk)\alpha_P(\pi)=\alpha_{\Omega A}(\pi k)=0
\end{displaymath}
and hence $\alpha_P(\pi)=(F\pi)x$ for some $x\in FA$, by the exactness condition. Let $f\in\PP(B,A)$ be given for some $B\in\A$. For some $g:B\to P$, $f=\pi g$. The diagram
\begin{displaymath}
\xymatrix{\PP(P,A)\ar[d]_{g^*}\ar[r]^{\alpha_P}&FP\ar[d]^{Fg}\\
\PP(B,A)\ar[r]_{\alpha_{B}}&FB}
\end{displaymath}
commutes so 
\begin{displaymath}
\alpha_B(f)=\alpha_B g^*(\pi)=(Fg)\alpha_P(\pi)=(Fg)(F\pi)x=(Ff)x.
\end{displaymath}
This shows that $\alpha=\phi_x|_{\PP(-,A)}$. Therefore, in this case, the given monomorphism is also an epimorphism.
\end{proof}
\begin{cor}
$$\fp_0(\A^\op,\Ab)=\{F\in\fp(\A^\op,\Ab):(\fp_{!*}(\A^\op,\Ab),F)=0\}.$$
\end{cor}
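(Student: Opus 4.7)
The plan is to prove both inclusions directly, using the two already-established results about $\fp_{!*}$ and the monomorphism from \thref{pinfp1}.

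For the inclusion ``$\subseteq$'', suppose $F\in\fp_0(\A^\op,\Ab)$ and let $G\in\fp_{!*}(\A^\op,\Ab)$ be arbitrary. By the previous corollary characterising $\fp_{!*}(\A^\op,\Ab)$, we have $(G,\fp_0(\A^\op,\Ab))=0$, hence in particular $(G,F)=0$. This handles one direction immediately.

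For the inclusion ``$\supseteq$'', suppose $(G,F)=0$ for every $G\in\fp_{!*}(\A^\op,\Ab)$. For each object $A\in\A$, the proposition preceding this corollary shows that $\PP(-,A)\in\fp_{!*}(\A^\op,\Ab)$, so by hypothesis $(\PP(-,A),F)=0$. Now \thref{pinfp1} supplies a monomorphism
\begin{displaymath}
FA/F_0A\hookrightarrow(\PP(-,A),F)=0,
\end{displaymath}
which forces $FA=F_0A$ for every $A\in\A$. Thus the canonical subfunctor inclusion $F_0\hookrightarrow F$ is a pointwise isomorphism, hence an isomorphism, so $F\cong F_0\in\fp_0(\A^\op,\Ab)$.

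There is really no obstacle here: both corollaries feeding into the argument (the characterisation of $\fp_{!*}(\A^\op,\Ab)$ via double orthogonality, and the existence of the monomorphism $FA/F_0A\to(\PP(-,A),F)$) have just been established, and the proposition identifying $j_{!*}A$ with $\PP(-,A)$ guarantees that $\PP(-,A)$ is in the test class. The only thing to be careful about is applying \thref{pinfp1} for \emph{every} $A$, so as to conclude $F=F_0$ as functors rather than merely at isolated objects — but this is automatic since the monomorphism is natural in $A$.
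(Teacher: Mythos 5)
Your proof is correct, and its second half coincides with the paper's argument: since $\PP(-,A)\in\fp_{!*}(\A^\op,\Ab)$ for every $A\in\A$, the hypothesis $(\fp_{!*}(\A^\op,\Ab),F)=0$ together with the monomorphism $FA/F_0A\to(\PP(-,A),F)$ of \thref{pinfp1} forces $F=F_0\in\fp_0(\A^\op,\Ab)$. The only divergence is in the forward inclusion. The paper again invokes \thref{pinfp1}, this time its isomorphism clause: for $F\in\fp_0(\A^\op,\Ab)$ one has $FP=0$ on projectives $P$ (by \thref{wproj}), so the exactness hypothesis of \thref{pinfp1} is trivially satisfied and $(\PP(-,A),F)\cong FA/F_0A=0$. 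You instead quote the preceding corollary, which characterises $\fp_{!*}(\A^\op,\Ab)$ as the functors $G$ with $(\fp_0(\A^\op,\Ab),G)=0$ and $(G,\fp_0(\A^\op,\Ab))=0$, and read off $(G,F)=0$ directly; this is equally valid (that corollary is proved independently, so there is no circularity) and spares you the verification of the exactness condition, at the cost of using the full double-orthogonality statement where the paper only needs the mapping-in half of \thref{pinfp1}. Your closing remark about naturality is not even needed: $F_0$ is by construction a subfunctor of $F$, so $F_0A=FA$ for all $A$ already gives $F_0=F$.
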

\begin{proof}
If $F\in\fp_0(\A^\op,\Ab)$ then $F=F_0$, so by \thref{pinfp1},
\begin{displaymath}
(\PP(-,A),F)\cong FA/F_0A=0
\end{displaymath}
for any $A\in\A$.

If $(\fp_{!*}(\A^\op,\Ab),F)=0$ then by \thref{pinfp1}, there is a monomorphism $F/F_0\to 0$, so $F=F_0\in\fp(\A^\op,\Ab)$.
\end{proof}
\section{Applications to pp formulas}
In this section, we work out what the canonical features of the defect recollement look like on the finitely presented functors which arise naturally in the model theory of modules: Those given by pp-pairs. We will give only the algebraic characterisations of the concepts. Those interested in the dictionary between the algebraic and syntactic versions of the definitions and statements are referred to \cite{prest2009}.

Set $\A=(R\Lmods)^\op$ for this section, where $R$ is a ring. Note that, for any $F\in\fp(R\Lmods,\Ab)$, $F\in\fp_0(R\Lmods,\Ab)$ if and only if $FI=0$ for any injective left $R$-module $I$. Also, $F_0$ is the largest subfunctor of $F$ in $\fp_0(R\Lmods,\Ab)$.

Let $\Ell(R)$ be the full subcategory of $\fp(R\Lmods,\Ab)$ consisting of those functors $F:R\Lmods\to\Ab$ which are part of an exact sequence
\begin{displaymath}
\xymatrix{\Hom_R(B,-)\ar[r]&\Hom_R(A,-)\ar[r]&F\ar[r]&0}
\end{displaymath}
for $A,B\in R\lmods$. Since $R\lmods$ is closed under cokernels, any such sequence can be extended to an exact sequence 
\begin{displaymath}
\xymatrix{0\ar[r]&\Hom_R(C,-)\ar[r]&\Hom_R(B,-)\ar[r]&\Hom_R(A,-)\ar[r]&F\ar[r]&0}
\end{displaymath}
where $C\in R\lmods$. It follows by Auslander's proposition, \cite[Proposition 2.1]{auslander1965}, that $\Ell(R)$ is an abelian subcategory of $\fp(R\Lmods,\Ab)$, and by the horseshoe lemma it is closed under extensions. Clearly $\Ell(R)$ has enough projectives, and each of its projectives are of the form $\Hom_R(M,-)$ for some $M\in R\lmods$. We write $U:R\Lmods\to\Ab$ for the forgetful functor. Note that, for any $n\in\mathbb{N}$, $U^n\cong \Hom_R(R^n,-)$, which is a projective in $\Ell(R)$.
\begin{defn}A \textbf{pp formula} is a functor in $\Ell(R)$ with projective dimension at most one in $\Ell(R)$.
\end{defn}
\begin{prop}\emph{\cite[Section 10.2.1]{prest2009}} For any funcor $\phi\in\Ell(R)$, the following are equivalent.
\begin{enumerate}
\item $\phi$ is a pp formula.
\item $\phi$ is isomorphic to a subfunctor of $U^n$ for some $n\in\mathbb{N}$.
\item $\phi$ is a subobject of a projective in $\Ell(R)$.
\end{enumerate}
\end{prop}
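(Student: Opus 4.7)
The plan is to prove the cycle $(2)\Rightarrow(3)\Rightarrow(1)\Rightarrow(2)$, and identify where the real work lies. The equivalence $(2)\Leftrightarrow(3)$ is essentially bookkeeping: $U^n=\Hom_R(R^n,-)$ is itself projective in $\Ell(R)$, so $(2)\Rightarrow(3)$ is immediate, while for $(3)\Rightarrow(2)$ any projective in $\Ell(R)$ has the form $\Hom_R(M,-)$ for some $M\in R\lmods$, and a chosen surjection $R^n\twoheadrightarrow M$ induces a monomorphism $\Hom_R(M,-)\hookrightarrow U^n$ into which $\phi$ then embeds.

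For $(2)\Rightarrow(1)$, I would take any projective cover $\Hom_R(A,-)\twoheadrightarrow\phi$ in $\Ell(R)$ and compose with the hypothesised monomorphism $\phi\hookrightarrow U^n$. By the Yoneda lemma, the composite $\Hom_R(A,-)\to U^n$ comes from some morphism $R^n\to A$ in $R\lmods$; taking its cokernel $C\in R\lmods$ (possible because $R\lmods$ is closed under cokernels), the right exact sequence $R^n\to A\to C\to 0$ induces the left exact sequence $0\to\Hom_R(C,-)\to\Hom_R(A,-)\to U^n$. Monicity of $\phi\hookrightarrow U^n$ forces the kernels of $\Hom_R(A,-)\to\phi$ and of $\Hom_R(A,-)\to U^n$ to coincide, producing a length-one projective resolution of $\phi$ in $\Ell(R)$.

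The main obstacle is $(1)\Rightarrow(2)$. Starting from a short projective resolution $0\to\Hom_R(C,-)\to\Hom_R(A,-)\to\phi\to 0$ in $\Ell(R)$ with $A,C\in R\lmods$, Yoneda rewrites the monomorphism $\Hom_R(C,-)\hookrightarrow\Hom_R(A,-)$ as $\iota^*$ for some $\iota:A\to C$ in $R\lmods$, and its monicity translates to the assertion that $\iota$ is an epimorphism in $R\Lmods$. Let $K=\Ker\iota$, computed in $R\Lmods$. The key step is to establish that $K$ is finitely generated, so that any surjection $R^n\twoheadrightarrow K$ produces a right exact sequence $R^n\to A\to C\to 0$ in $R\Lmods$; this finite generation is the standard consequence of Schanuel's lemma that the kernel of a surjection from a finitely generated module to a finitely presented module is itself finitely generated. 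With $K$ known to be finitely generated, applying $\Hom_R(-,-)$ to $R^n\to A\to C\to 0$ gives an exact sequence $0\to\Hom_R(C,-)\to\Hom_R(A,-)\to U^n$, by which $\phi=\Img(\Hom_R(A,-)\to U^n)$ embeds in $U^n$, completing the cycle.
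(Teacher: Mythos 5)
Your proposal is correct and takes essentially the same route as the paper's proof: the passage from projective dimension at most one to an embedding into $U^n$ rests on the finite generation of the kernel of the induced surjection $A\to C$ between finitely presented modules (the paper cites Breitsprecher where you invoke the standard Schanuel argument) followed by covering that kernel by $R^n$, and the converse rests on forming the cokernel in $R\lmods$ of the Yoneda-induced morphism and using left exactness of $\Hom$. Only the arrangement of the implications (you prove $2\Leftrightarrow 3$ and then $1\Leftrightarrow 2$, the paper proves $1\Rightarrow 2,3$ and $3\Rightarrow 1$) differs, not the substance.
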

\begin{proof}
Suppose $\phi$ is a pp formula, and 
\begin{displaymath}
\xymatrix{0\ar[r]&\Hom_R(C,-)\ar[r]^{g^*}&\Hom_R(B,-)\ar[r]&\phi\ar[r]& 0}
\end{displaymath}
is an exact sequence. Consider the kernel $f:A\to B$ of $g:B\to C$. Since $B$ and $C$ are finitely presented, $A$ is finitely generated (see \cite[Lemma 1.9]{breit1970} for this). Since 
\begin{displaymath}
\xymatrix{0\ar[r]&\Hom_R(C,-)\ar[r]^{g^*}&\Hom_R(B,-)\ar[r]^{f^*}&\Hom_R(A,-)}
\end{displaymath}
is exact, $\phi$ is isomorphic to a subfunctor of $\Hom_R(A,-)$. For some $n\in\mathbb{N}$, there is an epimorphism $R^n\to A$, and therefore a monomorphism $\Hom_R(A,-)\to \Hom(R^n,-)=U^n$. Therefore, item 1 implies item 2 and item 3. 

Suppose $\phi\in\Ell(R)$ is isomorphic to a subfunctor of a projective in $\Ell(R)$, so there is a monomorphism $\phi\to\Hom_R(C,-)$ for some $C\in R\lmods$. But since $\phi\in\Ell(R)$, there is an epimorphism $\Hom_R(B,-)\to\phi$ for some $B\in R\lmods$. The composition $\Hom_R(B,-)\to\Hom_R(A,-)$ is induced by some morphism $f:A\to B$. If $g:B\to C$ is the cokernel of $f$, then $C\in R\lmods$ and we obtain an exact sequence
\begin{displaymath}
\xymatrix{0\ar[r]&\Hom_R(C,-)\ar[r]^{g^*}&\Hom_R(B,-)\ar[r]&\phi\ar[r]&0,}
\end{displaymath}
so $\phi$ has projective dimension less than or equal to one in $\Ell(R)$. Therefore, item 3 implies item 1. In particular, item 2 also implies item 1.
\end{proof}
\begin{lem}\thlabel{nicey}Suppose $R$ is left coherent. For any $F\in\Ell(R)$, $wF\in R\lmods$ and $F_0\in\Ell(R)$.
\end{lem}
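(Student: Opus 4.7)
The plan is to compute $wF$ explicitly from a projective presentation of $F$ in $\Ell(R)$, and then to recover $F_0$ as a kernel taken inside the already abelian subcategory $\Ell(R)$.

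Since $\Ell(R)$ has enough projectives of the form $\Hom_R(M,-)$ with $M\in R\lmods$, I would begin with a projective resolution
\begin{displaymath}
0\to\Hom_R(C,-)\to\Hom_R(B,-)\to\Hom_R(A,-)\to F\to 0
\end{displaymath}
in $\fp(R\Lmods,\Ab)$ with $A,B,C\in R\lmods$, induced by maps $A\xrightarrow{p}B\to C$ in $R\Lmods$ with $C=\Coker p$. Applying the exact localisation $w:\fp(R\Lmods,\Ab)\to(R\Lmods)^\op$ and using $w\Hom_R(M,-)\cong M$, I obtain an exact sequence in $(R\Lmods)^\op$ which, reinterpreted in $R\Lmods$ by reversing arrows, reads
\begin{displaymath}
0\to wF\to A\xrightarrow{p}B\to C\to 0.
\end{displaymath}
Hence $wF\cong\Ker p$, and left coherence of $R$ guarantees that the kernel of a morphism between finitely presented left $R$-modules is itself finitely presented, so $wF\in R\lmods$.

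For the second assertion, recall from the discussion immediately following \thref{counitstuff} that $F_0$ is the kernel of the unit component $\eta_F:F\to YwF$. By part~1, $wF\in R\lmods$, hence $YwF=\Hom_R(wF,-)$ is one of the canonical projectives of $\Ell(R)$ and in particular lies in $\Ell(R)$. Both $F$ and $YwF$ therefore lie in $\Ell(R)$, and since $\Ell(R)$ is an abelian subcategory of $\fp(R\Lmods,\Ab)$ (by Auslander's proposition, as cited at the start of this section), the kernel of $\eta_F$ computed in the ambient category coincides with its kernel in $\Ell(R)$. Thus $F_0\in\Ell(R)$.

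The only real subtlety is the contravariant bookkeeping: because $\A=(R\Lmods)^\op$, the exact output of $w$ lives in the opposite category and has to be reversed to be read as the familiar left-exact sequence in $R\Lmods$ in which $wF$ appears as a kernel of $p$ rather than as a cokernel. Once that identification is made, left coherence supplies the finite presentability of $wF$, and the abelian-subcategory closure property of $\Ell(R)$ finishes the claim about $F_0$.
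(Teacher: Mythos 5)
Your proposal is correct and follows essentially the same route as the paper: apply the exact defect $w$ to a projective presentation of $F$ by representables to identify $wF$ as the kernel of a map between finitely presented modules (finitely presented by left coherence), then realise $F_0$ as the kernel of the canonical morphism $F\to\Hom_R(wF,-)$ and use that $\Ell(R)$ is closed under kernels. The only cosmetic difference is that you carry the full length-two resolution and the opposite-category bookkeeping explicitly, whereas the paper works directly from the two-term presentation.
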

\begin{proof}
Let 
\begin{displaymath}
\xymatrix{\Hom_R(B,-)\ar[r]&\Hom_R(A,-)\ar[r]&F\ar[r]&0}
\end{displaymath}
be an exact sequence where $A,B\in R\lmods$. Applying the defect, there is an exact sequence 
\begin{displaymath}
\xymatrix{0\ar[r]&wF\ar[r]&A\ar[r]&B.}
\end{displaymath}
Since $R$ is left coherent, $R\lmods$ is an abelian subcategory of $R\Lmods$, and hence $wF\in R\lmods$, so $\Hom_R(wF,-)\in\Ell(R)$. We can construct $F_0$ as the kernel of the canonical morphism $F\to \Hom_R(wF,-)$, and therefore $F_0\in\Ell(R)$ since $\Ell(R)$ is closed under kernels.
\end{proof}
\begin{cor}Suppose $R$ is a left coherent ring. For any $\phi\in\Ell(R)$, $\phi$ is a pp formula if and only if $\phi_0=0$.
\end{cor}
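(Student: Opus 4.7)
The plan is to pivot on two ingredients that are already available: the description of $(-)_0$ as the kernel of the unit $\eta_F\colon F\to YwF$ of the adjunction $w\dashv Y$ (established earlier in the paper via the dual of \thref{counitstuff}), and \thref{nicey} for the coherent setting. The key preliminary observation, which requires no coherence assumption, is that whenever $F$ lies in the image of $Y$, the unit component $\eta_F$ is an isomorphism and hence $F_0=0$. Equivalently, $(YC)_0=0$ for every $C\in\A$.

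For the forward direction I would argue as follows. Assume $\phi$ is a pp formula. By the preceding proposition, $\phi$ embeds into a projective of $\Ell(R)$, that is, into a functor of the form $\Hom_R(C,-)$ for some $C\in R\lmods$. In the notation of the defect recollement with $\A=(R\Lmods)^\op$, this projective is precisely $YC$, and the preliminary observation gives $(YC)_0=0$. Since $(-)_0$ is right adjoint to the inclusion $\fp_0(R\Lmods,\Ab)\hookrightarrow\fp(R\Lmods,\Ab)$, it is left exact and in particular preserves monomorphisms, so $\phi_0\hookrightarrow (YC)_0=0$, giving $\phi_0=0$. This half of the argument does not need coherence of $R$.

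For the converse I would invoke \thref{nicey}. Suppose $\phi_0=0$. Because $\phi_0=\Ker(\eta_\phi)$, the unit map $\eta_\phi\colon\phi\to Yw\phi$ is a monomorphism. By \thref{nicey}, $w\phi\in R\lmods$, so $Yw\phi=\Hom_R(w\phi,-)$ is a projective object of $\Ell(R)$. Hence $\phi$ is a subobject of a projective in $\Ell(R)$, and the preceding proposition then gives that $\phi$ is a pp formula.

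There is no serious technical obstacle left for this corollary itself: both directions reduce to recognising $\eta_\phi$ as the relevant monomorphism. The only place coherence of $R$ enters is in appealing to \thref{nicey} to conclude $w\phi\in R\lmods$; without it, $w\phi$ need not be finitely presented, so $Yw\phi$ need not be a projective of $\Ell(R)$ and the reverse implication would collapse.
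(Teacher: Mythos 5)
Your proposal is correct and follows essentially the same route as the paper: both directions hinge on identifying $\phi_0$ with $\Ker(\eta_\phi)$, using \thref{nicey} to see that $\Hom_R(w\phi,-)$ is projective in $\Ell(R)$ for the converse, and using left exactness of $(-)_0$ together with the vanishing of $(-)_0$ on representables for the forward implication. Your explicit justification that $(YC)_0=0$ via the unit being an isomorphism on the image of $Y$ is exactly what the paper leaves implicit.
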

\begin{proof}
Since $\phi_0$ is the kernel of the canonical morphism $\phi\to\Hom_R(w\phi,-)$, if $\phi_0=0$ then $\phi$ is isomorphic to a subfunctor of $\Hom_R(w\phi,-)$, which is a projective in $\Ell(R)$.

Conversely, $\phi$ is a subfunctor of a projective, then it is a finitely presented subfunctor of a representable functor and, since $(-)_0$ is left exact, $\phi_0=0$.
\end{proof}
\begin{defn}Any subobject (in $\Ell(R)$) of a pp formula is itself a pp formula. A pp formula which is a subfunctor of a pp formula $\phi$ is called a \textbf{pp subformula of $\phi$}. For any $n\in\mathbb{N}$, a pp subformula of $U^n$ is called a \textbf{pp-$n$-formula}.
\end{defn}
\begin{defn}For any pp-$n$-formula $\phi$, there is a natural transformation $\Hom_R(C,-)\to U^n$ for some $C\in R\lmods$ whose image is $\phi$. This natural transformation corresponds to some $\overline c\in C^n$. We call such a pair $(C,\overline c)$ a \textbf{free realisation} of $\phi$. Note that $\overline c\in\phi C$ for any such pair.
\end{defn}
\begin{prop}Let $\phi\in\Ell(R)$ be a pp-$n$-formula. A pair $(C,\overline c)$ consisting of $C\in R\lmods$ and $\overline c\in C^n$ is a free realisation of $\phi$ if and only if $\phi$ is the smallest pp-$n$-formula such that $\overline c\in\phi C$.
\end{prop}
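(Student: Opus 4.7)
The plan is to translate everything through the Yoneda lemma. By Yoneda, natural transformations $\Hom_R(C,-)\to U^n$ correspond bijectively to elements of $U^n(C)=C^n$, and $\overline{c}\in C^n$ corresponds to the transformation $\alpha^{\overline c}$ given by $\alpha^{\overline c}_M(f)=(f(c_1),\dots,f(c_n))$. Thus $(C,\overline c)$ is a free realisation of $\phi$ precisely when $\Img(\alpha^{\overline c})=\phi$ as subfunctors of $U^n$. The entire argument then reduces to tracking where $1_C$ gets sent.

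For the forward direction, assume $(C,\overline c)$ is a free realisation, so $\phi=\Img(\alpha^{\overline c})$. Evaluating at $1_C$ gives $\overline c=\alpha^{\overline c}_C(1_C)\in \phi C$. For minimality, suppose $\psi$ is any pp-$n$-formula with $\overline c\in \psi C$. Applying Yoneda to the element $\overline c\in \psi C$ produces a natural transformation $\beta:\Hom_R(C,-)\to \psi$ with $\beta_C(1_C)=\overline c$. Composing $\beta$ with the inclusion $\psi\hookrightarrow U^n$ yields a natural transformation $\Hom_R(C,-)\to U^n$ that sends $1_C$ to $\overline c$, so by the uniqueness clause of Yoneda it must equal $\alpha^{\overline c}$. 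Therefore $\phi=\Img(\alpha^{\overline c})\subseteq \psi$.

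For the converse, assume $\phi$ is the smallest pp-$n$-formula with $\overline c\in\phi C$, and set $\phi'=\Img(\alpha^{\overline c})$. Since $\Ell(R)$ is an abelian subcategory of $\fp(R\Lmods,\Ab)$ containing both $\Hom_R(C,-)$ and $U^n$, the image $\phi'$ lies in $\Ell(R)$, and being a subfunctor of $U^n$ it is a pp-$n$-formula. As above, $\alpha^{\overline c}_C(1_C)=\overline c$ shows $\overline c\in\phi'C$, so minimality of $\phi$ gives $\phi\subseteq \phi'$. In the other direction, $\overline c\in\phi C$ together with Yoneda produces a factorisation of $\alpha^{\overline c}$ through the inclusion $\phi\hookrightarrow U^n$, forcing $\phi'=\Img(\alpha^{\overline c})\subseteq \phi$. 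Hence $\phi=\phi'$ and $(C,\overline c)$ is a free realisation.

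No real obstacle is anticipated; the argument is a pure Yoneda manipulation. The only mild care needed is to confirm that the candidate image $\phi'$ in the backward direction actually qualifies as a pp-$n$-formula in the sense of the previous proposition, which is immediate from $\Ell(R)$ being closed under images in $\fp(R\Lmods,\Ab)$ and from $\phi'$ being a subfunctor of the projective $U^n$.
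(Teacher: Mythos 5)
Your proof is correct and follows essentially the same route as the paper: the paper's argument is precisely that the image of the natural transformation $\Hom_R(C,-)\to U^n$ corresponding to $\overline c$ is contained in any pp-$n$-formula $\chi$ with $\overline c\in\chi C$, which is the Yoneda factorisation you spell out. You simply make both directions and the membership of the image in $\Ell(R)$ explicit, which the paper leaves as "easy to show."
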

\begin{proof}If $\chi$ is a pp-$n$-formula such that $\overline c\in \chi C$, then it is easy to show that the image of the natural transformation $\Hom_R(C,-)\to U^n$ corresponding to $\overline c\in C^n$ is contained in $\chi$. Since that image is $\phi$, this completes the proof.
\end{proof}
\begin{defn}We say that $F\in\Ell(R)$ is a \textbf{pp-pair} if $F=\phi/\psi$ for a pp formula $\phi$ and a pp subformula $\psi\leqslant \phi$. We call a projective in $\Ell(R)$ a \textbf{quantifier free pp formula}.
\end{defn}
\begin{prop}Every functor in $\Ell(R)$ is isomorphic to a pp-pair $\phi/\psi$ such that $\phi$ is a quantifier free pp formula.
\end{prop}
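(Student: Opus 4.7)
The plan is to unpack the definitions: a ``pp-pair'' is a quotient $\phi/\psi$ where $\phi$ is a pp formula and $\psi \leqslant \phi$ is a pp subformula, and a ``quantifier free pp formula'' is just a projective in $\Ell(R)$, which by the discussion preceding the proposition means a representable $\Hom_R(A,-)$ with $A \in R\lmods$. So the statement amounts to saying that every $F \in \Ell(R)$ fits into a short exact sequence $0 \to \psi \to \Hom_R(A,-) \to F \to 0$ where $\psi$ is a pp formula.

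First I would take an arbitrary $F \in \Ell(R)$ and use the fact (noted just before the definition of pp formulas) that $\Ell(R)$ has enough projectives, and that each of these projectives has the form $\Hom_R(A,-)$ for some $A \in R\lmods$. This gives an epimorphism $\phi \defeq \Hom_R(A,-) \twoheadrightarrow F$ in $\Ell(R)$ for some $A \in R\lmods$; by construction $\phi$ is a quantifier free pp formula.

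Next, let $\psi$ be the kernel of this epimorphism in $\Ell(R)$. Since $\Ell(R)$ is an abelian subcategory of $\fp(R\Lmods,\Ab)$, we have $\psi \in \Ell(R)$. Moreover, $\psi$ is by construction a subobject of the projective $\phi \in \Ell(R)$, so by the equivalent characterisations of pp formulas (item 3 of the preceding proposition), $\psi$ is itself a pp formula. Since $\psi \leqslant \phi$, it is a pp subformula of $\phi$ in the sense of the definition. Therefore $F \cong \phi/\psi$ exhibits $F$ as a pp-pair with quantifier free numerator, as required.

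There is no real obstacle here; the work is already done by having set up $\Ell(R)$ to be abelian with enough projectives of the desired form, combined with the characterisation of pp formulas as subobjects of projectives. The proof is essentially a one-line application of these facts.
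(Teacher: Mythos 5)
Your proof is correct and follows essentially the same route as the paper: take an epimorphism $\Hom_R(A,-)\to F$ from a quantifier free pp formula and observe that its kernel (the syzygy $\Omega F$) is a pp subformula. The only cosmetic difference is that you invoke the ``subobject of a projective'' characterisation of pp formulas, while the paper argues via projective dimension ($F$ has projective dimension at most two, so $\Omega F$ has projective dimension at most one); these are equivalent by the preceding proposition.
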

\begin{proof}
Let $F\in\Ell(R)$ be given and let $\pi:\Hom_R(A,-)\to F$ be an epimorphism for some $A\in R\lmods$. Then 
\begin{displaymath}
F\cong \Hom_R(A,-)/\Omega F
\end{displaymath}
where $\Omega F$ is the first syzygy of $F$. Since $F$ has projective dimension at most two, its syzygy has projective dimension at most one, and is therefore a pp subformula of the quantifier free pp formula $\Hom_R(A,-)$.
\end{proof}
For any $F\in\fp(R\Lmods,\Ab)$, $F_0$ is the largest finitely presented subfunctor of $F$ which vanishes on injective left $R$-modules. 
\begin{lem}\thlabel{artlem}Suppose $R$ is an artin algebra. For any $F\in\Ell(R)$, $F^0\in\Ell(R)$.
\end{lem}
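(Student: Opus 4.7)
The plan is to apply the right exact functor $(-)^0$ to a presentation of $F$ by representables in $\Ell(R)$ and argue that the resulting cokernel presentation stays inside $\Ell(R)$. Since $\Ell(R)$ is an abelian subcategory of $\fp(R\Lmods,\Ab)$, hence closed under cokernels, it will suffice to show that $\underline{\Hom}_R(A,-)\in\Ell(R)$ for each $A\in R\lmods$. Recall from \thref{0hom} (translated to $\A=(R\Lmods)^\op$) that $(\Hom_R(A,-))^0\cong \underline{\Hom}_R(A,-)=\Hom_R(A,-)/\PP(A,-)$, where $\PP(A,-)$ is the subfunctor of morphisms factoring through an injective left $R$-module.

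The first step uses the artin algebra hypothesis in an essential way: for every $A\in R\lmods$ there is a monomorphism $A\hookrightarrow I$ with $I\in R\lmods$ an injective left $R$-module. This is standard via the artin duality $D=\Hom_k(-,E)$, where $k$ is the ground artinian commutative ring and $E$ is its injective envelope: taking a finite projective presentation of $DA\in\rmods R$ and applying $D$ yields an injective copresentation of $A\cong DDA$ by finitely presented injectives of the form $(DR)^m$.

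Next, I would observe that for any such embedding $A\hookrightarrow I$ with $I$ injective, every morphism $A\to J$ with $J$ injective extends to a morphism $I\to J$ by injectivity of $J$, hence factors through $A\to I$. Therefore $\PP(A,-)$ is exactly the image of $\Hom_R(I,-)\to\Hom_R(A,-)$, so there is a presentation
\begin{displaymath}
\xymatrix{\Hom_R(I,-)\ar[r]&\Hom_R(A,-)\ar[r]&\underline{\Hom}_R(A,-)\ar[r]&0}
\end{displaymath}
with $A,I\in R\lmods$. This shows $\underline{\Hom}_R(A,-)\in\Ell(R)$. Finally, given a presentation $\Hom_R(B,-)\to\Hom_R(A,-)\to F\to 0$ of $F$ with $A,B\in R\lmods$, applying the right exact functor $(-)^0$ yields
\begin{displaymath}
\xymatrix{\underline{\Hom}_R(B,-)\ar[r]&\underline{\Hom}_R(A,-)\ar[r]&F^0\ar[r]&0,}
\end{displaymath}
and since $\Ell(R)$ contains the first two functors and is closed under cokernels, $F^0\in\Ell(R)$.

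The main obstacle is the first step, that every finitely presented left $R$-module embeds into a finitely presented injective. This is precisely where the artin algebra hypothesis is needed rather than merely left coherence: over a general left coherent ring, $\PP(A,-)$ need not be a finitely generated subfunctor of $\Hom_R(A,-)$ and the argument above breaks down. Everything else in the proof is formal, using only right exactness of $(-)^0$, \thref{0hom}, and the fact that $\Ell(R)$ is an abelian subcategory.
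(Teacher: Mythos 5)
Your proof is correct, but it is organised differently from the paper's. The paper works directly with the counit: it writes $F^0=\Coker\bigl(\LD_0(Y)(wF)\to F\bigr)$, uses left coherence (\thref{nicey}) to get $wF\in R\lmods$, then uses the artin algebra hypothesis to embed $wF$ into a finitely presented injective $I$, which gives an epimorphism $\Hom_R(I,-)\to\LD_0(Y)(wF)$ and hence exhibits $F^0$ as the cokernel of a single morphism $\Hom_R(I,-)\to F$ between objects of $\Ell(R)$. You instead apply the right exact functor $(-)^0$ to a chosen presentation $\Hom_R(B,-)\to\Hom_R(A,-)\to F\to 0$ and reduce, via \thref{0hom}, to showing $\underline{\Hom}_R(A,-)\in\Ell(R)$ for $A\in R\lmods$; the artin algebra hypothesis is then used to embed $A$ (rather than $wF$) into a finitely presented injective $I$, after which the observation that $\PP(A,-)=\Img\bigl(\Hom_R(I,-)\to\Hom_R(A,-)\bigr)$ finishes the argument. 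Both proofs hinge on the same essential input -- over an artin algebra every finitely presented module embeds in a finitely presented injective -- but your route bypasses the defect $wF$ and \thref{nicey} altogether (so left coherence is never invoked separately), and it yields the reusable statement that all injectively stable Hom functors $\underline{\Hom}_R(A,-)$ with $A\in R\lmods$ lie in $\Ell(R)$, i.e.\ that $(-)^0$ restricts to a right exact endofunctor-like operation on $\Ell(R)$; the paper's route is shorter and produces $F^0$ directly in the form used in \thref{artres}. All the individual steps you use (right exactness of $(-)^0$, the identification of $\PP(A,-)$ via an injective embedding, closure of $\Ell(R)$ under cokernels) are sound.
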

\begin{proof}
First, note that $F^0$ is the cokernel of the canonical map $\LD_0(Y)(wF)\to F$. Since $R$ is left coherent, $wF\in R\lmods$. There is a finitely presented injective left $R$-module $I$ and a monomorphism $wF\to I$, and therefore there is an epimorphism $\Hom_R(I,-)\to\LD_0(Y)(wF)$. Therefore $F^0$ is the cokernel of a morphism $\Hom_R(I,-)\to F$. Since $\Hom_R(I,-),F\in\Ell(R)$, $F^0\in\Ell(R)$.
\end{proof}
\begin{thm}\thlabel{artres}Suppose $R$ is an artin algebra. Let $\phi/\psi$ be a pp-pair where $\phi$ and $\psi$ are pp-$n$-formulas. There is a smallest pp-$n$-formula $\rho$ such that $\psi\leqslant \rho\leqslant\phi$ and $\phi I=\rho I$ for any injective left $R$-module $I$. 
\end{thm}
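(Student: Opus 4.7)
The plan is to construct $\rho$ by applying the reflection $(-)^0$ to the pp-pair $\phi/\psi$. Since $R$ is an artin algebra, \thref{artlem} guarantees that $(\phi/\psi)^0$ lies in $\Ell(R)$, so the canonical quotient gives a short exact sequence
\begin{displaymath}
\xymatrix{0 \ar[r] & K \ar[r] & \phi/\psi \ar[r] & (\phi/\psi)^0 \ar[r] & 0}
\end{displaymath}
inside the abelian subcategory $\Ell(R)$. I would then define $\rho$ as the unique subfunctor of $\phi$ satisfying $\psi \leqslant \rho \leqslant \phi$ and $\rho/\psi = K$; since $\psi \in \Ell(R)$ and $\Ell(R)$ is closed under extensions, $\rho$ automatically lies in $\Ell(R)$ as well.

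Next, I would check the two required properties of $\rho$. As a subobject (in $\Ell(R)$) of the pp-$n$-formula $\phi$, $\rho$ is itself a pp-$n$-formula. Because $\A=(R\Lmods)^\op$, an object of $\fp_0(R\Lmods,\Ab)$ is precisely a finitely presented functor vanishing on injective left $R$-modules (as noted at the start of the section). Evaluating the displayed sequence at an injective $I$ and using that kernels and cokernels in $\fp(R\Lmods,\Ab)$ are computed pointwise yields $(\rho/\psi)I \cong (\phi/\psi)I$; combined with $\psi I \subseteq \rho I \subseteq \phi I$ as subgroups of $I^n$, this forces $\rho I = \phi I$.

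For minimality, suppose $\rho'$ is any pp-$n$-formula with $\psi \leqslant \rho' \leqslant \phi$ and $\phi I = \rho' I$ for every injective left $R$-module $I$. Then $(\phi/\rho')I = 0$ for every such $I$, so $\phi/\rho' \in \fp_0(R\Lmods,\Ab)$. By the universal property of the reflection $(-)^0$, the canonical surjection $\phi/\psi \twoheadrightarrow \phi/\rho'$ factors through $\phi/\psi \to (\phi/\psi)^0$, and tracing the induced containment of kernels gives $\rho/\psi \subseteq \rho'/\psi$, i.e.\ $\rho \leqslant \rho'$. I expect no substantive obstacle beyond staying inside $\Ell(R)$ at each step, which is precisely what the artin algebra hypothesis delivers through \thref{artlem}; this is the reason the result is not stated under mere coherence.
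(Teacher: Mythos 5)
Your proposal is correct and follows essentially the same route as the paper: define $\rho$ via the kernel of the reflection $\phi/\psi\to(\phi/\psi)^0$, use \thref{artlem} and closure of $\Ell(R)$ under kernels and extensions to see $\rho$ is a pp subformula of $\phi$, read off $\rho I=\phi I$ from the vanishing of $(\phi/\psi)^0$ on injectives, and get minimality from the universal property of the reflection applied to $\phi/\psi\twoheadrightarrow\phi/\rho'$. No gaps worth noting.
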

\begin{proof}
Let $F=\phi/\psi$. Then $F^0$ is a quotient of $F$, so there is some subfunctor $G\leqslant F$ such that the quotient $F\to F/G$ is the counit $F\to F^0$. Now, $G$ is a subfunctor of $F$, so $G=\rho/\psi$ for some functor $\rho:R\Lmods\to\Ab$ such that $\psi\leqslant \rho\leqslant \phi$. Since $G$ is the kernel of the morphism $F\to F^0$ and $F,F^0\in\Ell(R)$ by \thref{artlem}, we have $G\in\Ell(R)$. There is an exact sequence
\begin{displaymath}
\xymatrix{0\ar[r]&\psi\ar[r]&\rho\ar[r]&G\ar[r]&0}
\end{displaymath}
and $\psi,G\in\Ell(R)$. Since $\fp(R\Lmods,\Ab)$ and $\Ell(R)$ are closed under extensions, $\rho\in\Ell(R)$, and therefore $\rho$ is a pp subformula of $\phi$. Now,
\begin{displaymath}
F^0=F/G=(\phi/\psi)/(\rho/\psi)\cong \phi/\rho,
\end{displaymath}
and the counit $F\to F^0$ is isomorphic (as an object in the comma category $F\downarrow \Ell(R)$) to the quotient $\phi/\psi\to\phi/\rho$. For any injective left $R$-module $I$, $F^0I=0$, so $\rho I=\phi I$. If $\rho'$ were another pp formula such that $\psi\leqslant\rho'\leqslant \phi$ which also agrees with injectives on $\phi$, this would give a finitely presented functor $\phi/\rho'$ and an epimorphism $\phi/\psi\to \phi/\rho'$ where $\phi/\rho'\in\fp_0(R\Lmods,\Ab)$. By the universal property of the counit, this would induce a unique morphism $\alpha:\phi/\rho\to\phi/\rho'$ such that the diagram
\begin{displaymath}
\xymatrix{\phi/\psi\ar[r]\ar[rd]&\phi/\rho\ar[d]^\alpha\\&\phi/\rho'}
\end{displaymath}
commutes. It follows that $\rho\leqslant\rho'$ and $\alpha$ is the quotient map. This completes the proof.
\end{proof}
\begin{thm}\thlabel{cohringres}Suppose $R$ is a left coherent ring. Let $\phi/\psi$ be a pp-pair for left $R$-modules, where $\phi$ and $\psi$ are pp-$n$-formulas. There is a largest pp-$n$-formula $\sigma$ such that $\psi\leqslant \sigma\leqslant\phi$ and $\psi I=\sigma I$ for any injective left $R$-module $I$. Furthermore, 
\begin{displaymath}
(\phi/\psi)_0=\sigma/\psi.
\end{displaymath}
Therefore, $\phi/\psi$ is a pp formula if and only if $\psi=\sigma$.
\end{thm}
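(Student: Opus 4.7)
The strategy is to build $\sigma$ directly from the subfunctor $(\phi/\psi)_0 \leq \phi/\psi$, and then use \thref{nicey} together with the closure of $\Ell(R)$ under extensions to verify that $\sigma$ is a pp-$n$-formula. Write $F = \phi/\psi$ and let $\pi:\phi \to F$ be the canonical projection. Define $\sigma \leq \phi$ to be the preimage $\pi^{-1}(F_0)$, so that $\psi \leq \sigma \leq \phi$ and $\sigma/\psi = F_0$ by construction.

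The first task is to check that $\sigma$ is actually a pp-$n$-formula. By \thref{nicey} applied to $F \in \Ell(R)$ (using left coherence of $R$), we have $F_0 \in \Ell(R)$, hence $\sigma/\psi \in \Ell(R)$. From the short exact sequence
\begin{displaymath}
\xymatrix{0\ar[r]&\psi\ar[r]&\sigma\ar[r]&\sigma/\psi\ar[r]&0}
\end{displaymath}
and the fact (noted just before the definition of pp formulas) that $\Ell(R)$ is closed under extensions, it follows that $\sigma \in \Ell(R)$. Since $\sigma \leq \phi \leq U^n$ and any subfunctor of $U^n$ lying in $\Ell(R)$ is a pp-$n$-formula, we conclude that $\sigma$ is a pp-$n$-formula.

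Next I would verify the two characterising properties. For any injective left $R$-module $I$, exactness of evaluation yields $(\sigma/\psi)I = F_0 I = 0$, i.e.\ $\sigma I = \psi I$. For maximality, suppose $\sigma'$ is another pp-$n$-formula with $\psi \leq \sigma' \leq \phi$ and $\sigma' I = \psi I$ on every injective $I$. Then $\sigma'/\psi$ is a subfunctor of $F$ which vanishes on every injective, so $\sigma'/\psi \in \fp_0(R\Lmods,\Ab)$. Since $F_0$ is the largest such subfunctor of $F$, we get $\sigma'/\psi \leq F_0 = \sigma/\psi$, and hence $\sigma' \leq \sigma$. The identity $(\phi/\psi)_0 = \sigma/\psi$ is immediate from the construction.

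For the final claim, apply the corollary preceding the definition of pp subformulas: for $\phi/\psi \in \Ell(R)$, being a pp formula is equivalent to $(\phi/\psi)_0 = 0$, which via $(\phi/\psi)_0 = \sigma/\psi$ translates precisely to $\psi = \sigma$. The only point that requires any real work here is the invocation of \thref{nicey} to keep $F_0$ inside $\Ell(R)$; without that, $\sigma$ would only be known to be a subfunctor of $\phi$ rather than a bona fide pp-$n$-formula, so that is the step I would expect to scrutinise most carefully.
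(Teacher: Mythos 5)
Your proposal is correct and follows essentially the same route as the paper: identify $(\phi/\psi)_0$ with $\sigma/\psi$ via the canonical projection, use \thref{nicey} to keep $F_0$ in $\Ell(R)$, deduce $\sigma\in\Ell(R)$ from closure under extensions so that $\sigma$ is a pp-$n$-formula, and get maximality from $F_0$ being the largest subfunctor vanishing on injectives. The final equivalence via the corollary characterising pp formulas by $\phi_0=0$ is also exactly the paper's (implicit) argument.
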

\begin{proof}
By \thref{nicey}, $\Hom_R(wF,-)\in\Ell(R)$ and $F_0\in\Ell(R)$. Now, $F_0$ is a subfunctor of $F$, so $F_0=\sigma/\psi$ for some functor $\sigma:R\Lmods\to\Ab$ such that $\psi\leqslant\sigma\leqslant\phi$. There is an exact sequence 
\begin{displaymath}
\xymatrix{0\ar[r]&\psi\ar[r]&\sigma\ar[r]&\sigma/\psi\ar[r]&0}
\end{displaymath}
and $\psi,\sigma/\psi\in\Ell(R)$. Since both $\fp(R\Lmods,\Ab)$ and $\Ell(R)$ are closed under extensions, we have $\sigma\in\Ell(R)$, and hence $\sigma$ is a pp subformula of $\phi$. 

Since $F_0$ vanishes on injectives, $\psi$ and $\sigma$ agree on injectives. If $\sigma'$ where another pp formula such that $\psi\leqslant \sigma'\leqslant \phi$ and also agrees with $\psi$ on injectives, this would give a finitely presented subfunctor $F'=\sigma'/\psi$ of $F$ which vanishes on injectives, and since $F_0$ is the largest such subfunctor, $F'\subseteq F_0$, which implies that $\sigma'\leqslant \sigma$.
\end{proof}
\begin{defn}We write $\pp^n(R)$ for the lattice of all pp-$n$-formulas. That is, $\pp^n(R)$ is the lattice of $\Ell(R)$-subobjects of $U^n$. We call $\pp^n(R)$ the \textbf{pp-$n$-lattice for left $R$-modules.}
\end{defn}
\begin{lem} \emph{\cite[Lemma 6.1]{auslander1965}} For any left $R$-module $M$, $-\otimes_RM:\Rmods R\to\Ab$ is finitely presented if and only if $M$ is finitely presented if and only if $-\otimes_R M\in\Ell(R^\op)$.
\end{lem}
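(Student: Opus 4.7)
The plan is to prove the chain (2)$\Rightarrow$(3)$\Rightarrow$(1)$\Rightarrow$(2), where the first two implications are routine and the heart of the argument is (1)$\Rightarrow$(2).

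For (2)$\Rightarrow$(3), I would take a finite free presentation $R^m\to R^n\to M\to 0$ of $M$ as a left $R$-module and apply the right exact functor $-\otimes_R(-)$ in the second variable, obtaining an exact sequence
\[\Hom_{R^\op}(R^m,-)\to\Hom_{R^\op}(R^n,-)\to -\otimes_R M\to 0\]
of functors $\Rmods R\to\Ab$, via the natural isomorphism $-\otimes_R R^k\cong \Hom_{R^\op}(R^k,-)$. Since $R^k\in\rmods R=R^\op\lmods$, this is a presentation of the type defining $\Ell(R^\op)$. The implication (3)$\Rightarrow$(1) is immediate from the inclusion $\Ell(R^\op)\subseteq\fp(R^\op\Lmods,\Ab)$.

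The substantive direction is (1)$\Rightarrow$(2), which I would handle via a filtered colimit argument resting on two standard facts. First, the assignment $N\mapsto -\otimes_R N$ is fully faithful as a functor from $R\Lmods$ into the ambient category of all additive functors $\Rmods R\to\Ab$: given $\alpha:-\otimes_R N\to -\otimes_R N'$, evaluation at $R$ (using $R\otimes_R N\cong N$) yields a left $R$-module homomorphism $N\to N'$, and this map determines all components of $\alpha$ by naturality, since every element of $X\otimes_R N$ is the image of an element of $R\otimes_R N\cong N$ under some right $R$-module map $R\to X$. Second, every finitely presented functor is compact in the ambient category: representable functors $\Hom_{R^\op}(B,-)$ are compact by the Yoneda lemma combined with the fact that filtered colimits of $\Ab$-valued functors are computed pointwise, and the class of compact objects is closed under finite colimits.

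To conclude, I would write $M=\varinjlim_i M_i$ as a filtered colimit of finitely presented left $R$-modules, so that $-\otimes_R M\cong\varinjlim_i(-\otimes_R M_i)$ since tensor preserves colimits. By compactness, the identity on the finitely presented functor $-\otimes_R M$ factors through some $-\otimes_R M_i$, realising $-\otimes_R M$ as a retract of $-\otimes_R M_i$; by full faithfulness, $M$ is a retract of $M_i$ and hence finitely presented. The main obstacle will be the compactness step, since one must verify that ``finitely presented'' in the sense of the paper (a two-step presentation by representables indexed by arbitrary right $R$-modules) entails compactness with respect to filtered colimits in the large ambient functor category, rather than merely within $\fp(\Rmods R,\Ab)$; this reduces to the pointwise description of filtered colimits and the closure of compact objects under cokernels.
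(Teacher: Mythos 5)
Your proposal is correct. Note that the paper itself gives no proof of this lemma: it is quoted directly from Auslander's \emph{Coherent functors} (Lemma 6.1), so there is no in-paper argument to compare against. Your route --- the easy implications via the finite free presentation $R^m\to R^n\to M\to 0$ and the isomorphism $-\otimes_R R^k\cong\Hom_{R^\op}(R^k,-)$, and then the substantive direction via full faithfulness of $N\mapsto -\otimes_R N$, compactness of finitely presented functors with respect to pointwise filtered colimits (representables by Yoneda, then cokernels using exactness of filtered colimits in $\Ab$), and writing $M$ as a filtered colimit of finitely presented modules so that $M$ becomes a retract of one of them --- is the standard argument and is sound; the only small points to spell out are that naturality determines $\alpha_X$ on simple tensors (general elements being finite sums) and that a direct summand of a finitely presented module is finitely presented, both routine.
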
 
\begin{defn}The \textbf{Auslander-Gruson-Jensen duality} is a functor
\begin{displaymath}
d:(\Ell(R))^\op\to \Ell(R^\op)
\end{displaymath}
defined by 
\begin{displaymath}
(dF)M=(F,M\otimes_R-)
\end{displaymath}
for any $F\in \Ell(R)$ and $M\in \Rmods R$.
\end{defn}
\begin{thm}\emph{\cite[Section 6.2]{garkusha2001} \cite[Section 10.3]{prest2009}} For any $F\in \Ell(R)$, $dF\in\Ell(R^\op)$ and there is an isomorphism $d^2F\cong F$ which is natural in $F$. Therefore, $d:(\Ell(R))^\op\to\Ell(R^\op)$ is an equivalence of categories, with inverse $d^\op:\Ell(R^\op)\to (\Ell(R))^\op$. For any $M\in R\lmods$, and $N\in\rmods R$, there are isomorphisms
\begin{align*}
d\Hom_R(M,-)&\cong -\otimes_R M\\
d(N\otimes_R-)&\cong \Hom_R(N,-)
\end{align*}
which are natural in $M$ and $N$.
\end{thm}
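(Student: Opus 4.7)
The plan is to verify the displayed formulas first on the projective generators using the Yoneda lemma, and then to extend to an arbitrary $F\in\Ell(R)$ by applying $d$ to a four-term projective resolution. The isomorphism $d^{2}F\cong F$ will fall out once the sequence obtained after applying $d$ is shown to be exact, which reduces to the vanishing of an $\Ext^{1}$ term forced by the right-exactness of the tensor product.

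By the Yoneda lemma, for $M\in R\lmods$ and $N\in\rmods R$,
\begin{displaymath}
(d\Hom_R(M,-))(N)=(\Hom_R(M,-),\,N\otimes_R-)\cong N\otimes_R M,
\end{displaymath}
naturally in $M$ and $N$, so $d\Hom_R(M,-)\cong -\otimes_R M$, which lies in $\Ell(R^{\op})$ by the preceding lemma. For the other formula, I take a presentation $R^{m}\to R^{n}\to N\to 0$ in $\rmods R$, whence $N\otimes_R-$ has the projective presentation $\Hom_R(R^{m},-)\to\Hom_R(R^{n},-)\to N\otimes_R-\to 0$ in $\Ell(R)$. Applying the contravariant left-exact functor $d$ and using the first formula yields the left-exact sequence $0\to (d(N\otimes_R-))(M)\to M^{n}\to M^{m}$. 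Comparing with the left-exact sequence $0\to\Hom_R(N,M)\to M^{n}\to M^{m}$ obtained by applying $\Hom_R(-,M)$ to the presentation of $N$, we conclude $d(N\otimes_R-)\cong\Hom_R(N,-)\in\Ell(R^{\op})$.

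For a general $F\in\Ell(R)$, choose a presentation $\Hom_R(B,-)\to\Hom_R(A,-)\to F\to 0$ induced by some $f:A\to B$ in $R\lmods$, and extend it to a four-term projective resolution
\begin{displaymath}
0\to\Hom_R(C,-)\to\Hom_R(B,-)\to\Hom_R(A,-)\to F\to 0
\end{displaymath}
in $\fp(R\Lmods,\Ab)$ by taking $C=\Coker(f)\in R\lmods$ (using closure of $R\lmods$ under cokernels). Applying $(-,N\otimes_R-)$ and the Yoneda formula produces
\begin{displaymath}
0\to (dF)(N)\to N\otimes_R A\to N\otimes_R B\to N\otimes_R C\to\Ext^{1}(F,N\otimes_R-)\to 0,
\end{displaymath}
and the right-exactness of $N\otimes_R-$ applied to $A\to B\to C\to 0$ forces $\Ext^{1}(F,N\otimes_R-)=0$. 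Hence $0\to dF\to -\otimes_R A\to -\otimes_R B\to -\otimes_R C\to 0$ is exact in $\fp(\Rmods R,\Ab)$, so $dF\in\Ell(R^{\op})$. Applying $d$ once more and using $d(-\otimes_R K)\cong\Hom_R(K,-)$ (the $R^{\op}$-version of the formula from the previous paragraph), this sequence dualises to a four-term exact sequence ending in $d^{2}F$ that matches the original resolution, yielding the natural isomorphism $d^{2}F\cong F$; the equivalence of categories then follows formally. The main obstacle is the $\Ext^{1}$-vanishing: it is not formal, but is forced by the fact that the final map in the resolution of $F$ comes from a cokernel in $R\lmods$, so that tensoring converts the tail into an exact sequence and kills the obstruction.
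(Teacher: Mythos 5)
The paper itself does not prove this theorem --- it is quoted from Garkusha and Prest --- so your proposal can only be measured against the classical argument of those sources, which is indeed the route you follow. The first two thirds are essentially right: the Yoneda computation giving $d\Hom_R(M,-)\cong-\otimes_RM$, the comparison argument giving $d(N\otimes_R-)\cong\Hom_R(N,-)$, and the use of the resolution $0\to\Hom_R(C,-)\to\Hom_R(B,-)\to\Hom_R(A,-)\to F\to0$ with $C=\Coker(f)$ together with right-exactness of $N\otimes_R-$ to get the exact sequence $0\to(dF)N\to N\otimes_RA\to N\otimes_RB\to N\otimes_RC\to0$; combined with the fact that $\Ell(R^{\op})$ is an abelian subcategory of $\fp(\Rmods R,\Ab)$ (so closed under kernels), this does give $dF\in\Ell(R^{\op})$. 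One bookkeeping slip: the six-term sequence you display is not exact a priori, and the cokernel of $N\otimes_RB\to N\otimes_RC$ is $\Ext^{2}(F,N\otimes_R-)$, not $\Ext^{1}$; the correct statement is that the complex $N\otimes_RA\to N\otimes_RB\to N\otimes_RC\to0$ computes $\Ext^{i}(F,N\otimes_R-)$, and right-exactness of the tensor product kills $\Ext^{1}$ and $\Ext^{2}$ simultaneously.

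The genuine gap is the last step. You claim that applying $d$ to $0\to dF\to-\otimes_RA\to-\otimes_RB\to-\otimes_RC\to0$ ``dualises'' it into a four-term exact sequence ending in $d^{2}F$, justified only by the identification $d(-\otimes_RK)\cong\Hom_R(K,-)$ of the terms. Identifying terms is not the issue; exactness is. Since $(-,-\otimes_RX)$ is only left exact, split the sequence into $0\to dF\to-\otimes_RA\to K\to0$ and $0\to K\to-\otimes_RB\to-\otimes_RC\to0$ with $K=\Img(-\otimes_Rf)$: exactness of the dualised sequence at the spot $(-\otimes_RA,-\otimes_RX)$ and surjectivity onto $(dF,-\otimes_RX)=(d^{2}F)X$ require $\Ext^{1}(-\otimes_RC,-\otimes_RX)=0$ and $\Ext^{1}(K,-\otimes_RX)=0$. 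This fp-injectivity of the tensor functors relative to $\Ell(R^{\op})$ is exactly what makes $d$ exact and is the real content of the duality; as written you assume it silently. It is fixable with the tool you already used: both $K$ (a quotient of $-\otimes_RA$) and $-\otimes_RC$ lie in $\Ell(R^{\op})$, so they have presentations by representables $\Hom_R(Y,-)$ with $Y\in\rmods R$, and the mirror of your third-paragraph computation (right-exactness of $-\otimes_RX$ applied to the cokernel-extended presentation) gives the required $\Ext^{1}$-vanishing; only then do the identifications $d(-\otimes_RA)\cong\Hom_R(A,-)$ etc.\ reproduce the original resolution and yield $d^{2}F\cong F$. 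The naturality of this isomorphism, and the check that the dualised maps really are $f^{*}$ and $g^{*}$, are also asserted rather than argued, though these are routine once the exactness point is settled.
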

The Auslander-Gruson-Jensen duality was first written down by Auslander in \cite{auslander1986} and independently by Gruson and Jensen in \cite{gj}.

Since there is a canonical isomorphism $dU^n\cong U^n$ for any $n\in\mathbb{N}$, any pp-$n$-formula $\phi\in\Ell(R)$ gives rise to an exact sequence
\begin{displaymath}
\xymatrix{0\ar[r]&\phi\ar[r] & U^n\ar[r]&U^n/\phi\ar[r]&0}
\end{displaymath}
and hence an exact sequence
\begin{displaymath}
\xymatrix{0\ar[r]&d(U^n/\phi)\ar[r]&U^n\ar[r]&d\phi\ar[r]&0.}
\end{displaymath}
Therefore, $d(U^n/\phi)$ is canonically isomorphic to a pp-$n$-formula for right $R$-modules, which we denote by $D\phi\in\pp^n(R^\op)$. It is easy to show that we have constructed a morphism of lattices
\begin{displaymath}
D:(\pp^n(R))^\op\to \pp^n(R^\op)
\end{displaymath}
which satisfies the following properties.
\begin{itemize}
\item $D\phi \leqslant D\psi\Leftrightarrow \psi\leqslant \phi$ for any $\psi,\phi\in\pp^n(R)$.
\item $D^2\phi=\phi$ for any $\phi\in\pp^n(R)$.
\item $D$ is an isomorphism of lattices with inverse $D^\op:\pp^n(R^\op)\to(\pp^n(R))^\op$.
\item \cite[Corollary 10.3.8]{prest2009} For any pp pair $\phi/\psi$ where $\phi$ and $\psi$ are pp-$n$-formulas,
\begin{displaymath}
d(\phi/\psi)\cong D\psi/D\phi.
\end{displaymath}
\end{itemize}
The isomorphism $D:(\pp^n(R))^\op\to\pp^n(R^\op)$ is called the \textbf{elementary duality} and was first discovered by Prest, in its syntactic form, in \cite{prest1988}. Our description of it is equivalent to Prest's by \cite[Corollary 10.3.6]{prest2009}.
\begin{lem}[Holds when $\A$ is any abelian category.]\thlabel{leftdef}\emph{\cite[Theorem 4]{russell}} For any $F\in\fp(\A^\op,\Ab)$ and any left exact functor $G:\A^\op\to\Ab$, there is an isomorphism
\begin{displaymath}
G(wF)\cong (F,G)
\end{displaymath}
which is natural in $F$ and $G$.
\end{lem}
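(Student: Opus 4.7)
The plan is to compute both sides via a projective presentation of $F$ and observe that they produce the same kernel.

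First, I would fix a projective presentation
\begin{displaymath}
\xymatrix{\Hom_\A(-,B)\ar[r]^{g_*}&\Hom_\A(-,C)\ar[r]&F\ar[r]&0}
\end{displaymath}
of $F$ arising from a morphism $g:B\to C$ in $\A$. The adjunction $w\dashv Y$ together with the Yoneda lemma (setting $G=Y A$ for $A\in\A$) pins down the defect as $wF\cong\Coker(g)$; more precisely, exactness of $B\xrightarrow{g}C\to wF\to 0$ is forced by Yoneda's lemma applied to both sides of $\Hom_\A(wF,A)\cong(F,YA)$.

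Next, for the left hand side, I would use that $G$ is left exact (contravariantly), so it sends the right exact sequence $B\xrightarrow{g}C\to wF\to 0$ in $\A$ to the left exact sequence
\begin{displaymath}
\xymatrix{0\ar[r]&G(wF)\ar[r]&GC\ar[r]^{Gg}&GB}
\end{displaymath}
in $\Ab$; thus $G(wF)\cong\Ker(Gg)$, naturally in $G$. For the right hand side, applying $\Hom(-,G)$ to the projective presentation yields an exact sequence
\begin{displaymath}
\xymatrix{0\ar[r]&(F,G)\ar[r]&(\Hom_\A(-,C),G)\ar[r]&(\Hom_\A(-,B),G),}
\end{displaymath}
and the Yoneda lemma (valid for any functor $G:\A^\op\to\Ab$, left exact or not) identifies this with $0\to(F,G)\to GC\xrightarrow{Gg}GB$. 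Hence $(F,G)\cong \Ker(Gg)$ as well.

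Composing the two identifications gives the desired isomorphism $G(wF)\cong (F,G)$. Naturality in $G$ is immediate from naturality of Yoneda and of $G$ applied to a fixed diagram; naturality in $F$ follows by choosing a presentation functorially up to homotopy and checking that the identification is independent of the chosen presentation, which is automatic since both sides are intrinsically defined. There is no real obstacle here; the only mild subtlety is confirming that the direction of the Yoneda identification $(\Hom_\A(-,X),G)\cong GX$ intertwines $g_*$ on the left with $Gg$ on the right, which is a standard unwinding of the Yoneda bijection.
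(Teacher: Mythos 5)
Your proposal is correct and follows essentially the same route as the paper: fix a projective presentation coming from $g:B\to C$, use left exactness of $G$ on $B\xrightarrow{\;g\;}C\to wF\to 0$ and the Yoneda identification $(\Hom_\A(-,X),G)\cong GX$ to identify both sides with $\Ker(Gg)$. The only organizational difference is that the paper first defines the comparison map $G(wF)\to(F,G)$ intrinsically via the unit $\eta_F:F\to\Hom_\A(-,wF)$ and then checks it is an isomorphism by comparing with representables, which makes naturality in $F$ and independence of the chosen presentation automatic --- the one point your write-up leaves as a routine verification.
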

\begin{proof}
Let $\eta_F:F\to\Hom_\A(-,wF)$ be the component of the counit of the adjunction $w\dashv Y$. Let $\epsilon:wY\to 1_\A$ be the counit, which is an isomorphism (in fact, we can take it to be an identity). Since $(Y\epsilon)(\eta Y)=1_Y$, $\eta Y$ is an isomorphism. Therefore, $F$ is a representable functor if and only if $\eta_F$ is an isomorphism. Now, $\eta_F$ induces a morphism 
\begin{displaymath}
(\Hom_\A(-,wF),G)\to (F,G),
\end{displaymath}
and hence a morphism
\begin{displaymath}
G(wF)\to (F,G)
\end{displaymath}
which is an isomorphism when $F$ is representable. Let \begin{displaymath}
\xymatrix{\Hom_\A(-,B)\ar[r]&\Hom_\A(-,C)\ar[r]&F\ar[r]&0}
\end{displaymath}
be an exact sequence for some $B,C\in\A$. Since $G$ is left exact, this induces a commutative diagram
\begin{displaymath}
\xymatrix{0\ar[r]&G(wF)\ar[d]\ar[r]&G(w\Hom_\A(-,C))\ar[d]_\cong\ar[r]&G(w\Hom_\A(-,B))\ar[d]_\cong
\\
0\ar[r]&(F,G)\ar[r]&(\Hom_\A(-,C),G)\ar[r]&(\Hom_\A(-,B),G)}
\end{displaymath}
with exact rows, where the vertical arrows are given by the morphism we have constructed. Since the two vertical arrows at representables are isomorphisms, it follows that our morphism 
\begin{displaymath}
G(wF)\to (F,G)
\end{displaymath}
is also an isomorphism, as required.

\end{proof}
\begin{prop}\thlabel{wrath}For any $F\in\Ell(R)$ and any flat right $R$-module $M$, there is an isomorphism of left $\mathrm{End}_RM$-modules,
\begin{displaymath}
M\otimes_R wF\cong (dF)M.
\end{displaymath}
which is natural in $F$ and $M$. In  particular, there is an isomorphism
\begin{displaymath}
wF\cong (dF)R
\end{displaymath}
of left $R$-modules which is natural in $F$.
\end{prop}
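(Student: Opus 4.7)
The plan is to reduce this directly to Lemma \ref{leftdef} (Russell's identity $G(wF)\cong (F,G)$ for left exact $G$) by choosing $G = M\otimes_R -$. Since $\A = (R\Lmods)^\op$ in this section, we have $\A^\op = R\Lmods$ and $wF$ is a left $R$-module, so it makes sense to tensor it on the left by a right $R$-module.

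First I would observe that, because $M$ is a flat right $R$-module, the functor $M\otimes_R - : R\Lmods \to \Ab$ is left exact, so Lemma \ref{leftdef} applies with $G = M\otimes_R -$. This immediately yields a natural isomorphism
\begin{displaymath}
M\otimes_R wF \;=\; G(wF) \;\cong\; (F,G) \;=\; (F, M\otimes_R -) \;=\; (dF)M,
\end{displaymath}
which is natural in $F$ by naturality of Russell's isomorphism in its first argument. Naturality in $M$ comes from naturality of Russell's isomorphism in $G$: any morphism $M\to M'$ of flat right $R$-modules induces a natural transformation $M\otimes_R - \to M'\otimes_R -$, and the two sides above are functorial in this transformation.

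Next I would address the $\End_R(M)$-module structure. The left $\End_R(M)$-action on $M\otimes_R wF$ comes from acting on $M$, and the left $\End_R(M)$-action on $(dF)M = (F, M\otimes_R -)$ comes from post-composing $M\otimes_R -$ with endomorphisms induced from $\End_R(M)$. Both actions are the special case of the $M$-functoriality discussed above applied to morphisms $M\to M$, so the naturality in $M$ already established upgrades the abelian group isomorphism to an isomorphism of left $\End_R(M)$-modules.

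Finally, for the second statement, I would set $M = R$: then $R\otimes_R wF \cong wF$ canonically as left $R$-modules (with $\End_R(R) = R$ acting in the usual way), giving $wF \cong (dF)R$ naturally in $F$. The only mildly delicate point is checking that the module structures on both sides match, but this is entirely forced by the naturality of Russell's identity in $G$, so no genuine obstacle arises.
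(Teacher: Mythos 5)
Your proposal is correct and follows essentially the same route as the paper: the paper also deduces the result by applying \thref{leftdef} to $G = M\otimes_R -$ (exact, since $M$ is flat) and then specialises to $M=R$ for the second statement. Your extra remarks on how the $\End_R(M)$-module structure and naturality in $M$ fall out of the naturality of Russell's isomorphism in $G$ are accurate elaborations of details the paper leaves implicit.
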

\begin{proof}
Since $(dF)M=(F,M\otimes_R-)$ and $M\otimes_R-$ is exact, this follows by \thref{leftdef}. The second part follows by setting $M=R$.
\end{proof}
\begin{cor}\thlabel{kookoo}For any pp-pair $\phi/\psi\in\Ell(R)$, there is an isomorphism of left $R$-modules
\begin{displaymath}
w(\phi/\psi)\cong (D\psi)R/(D\phi)R.
\end{displaymath}
\end{cor}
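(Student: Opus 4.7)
The plan is to assemble the isomorphism as a short chain, using only the two tools immediately to hand: \thref{wrath} (which converts $w$ into evaluation of the Auslander-Gruson-Jensen dual at $R$) and the listed property $d(\phi/\psi)\cong D\psi/D\phi$ of elementary duality.

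First, I would apply \thref{wrath} with $F = \phi/\psi$ and $M = R$ to obtain an isomorphism of left $R$-modules $w(\phi/\psi)\cong (d(\phi/\psi))R$. Next, I would invoke the bullet point
\begin{displaymath}
d(\phi/\psi)\cong D\psi/D\phi
\end{displaymath}
in $\Ell(R^\op)$, so that $(d(\phi/\psi))R\cong (D\psi/D\phi)R$.

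The remaining step is to identify $(D\psi/D\phi)R$ with $(D\psi)R/(D\phi)R$. For this I would use the fact, noted by Auslander and recalled in the introduction, that cokernels in $\fp(\Rmods R,\Ab)$ are computed object-wise, together with the fact that $D\phi\leqslant D\psi$ so that the map $D\phi\to D\psi$ is an honest monomorphism of functors (hence an injection when evaluated at $R$). Evaluating the short exact sequence
\begin{displaymath}
\xymatrix{0\ar[r]&D\phi\ar[r]&D\psi\ar[r]&D\psi/D\phi\ar[r]&0}
\end{displaymath}
at $R$ then yields $(D\psi/D\phi)R=(D\psi)R/(D\phi)R$. Concatenating these three identifications gives the claimed isomorphism.

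There is no real obstacle here; the only point requiring any thought is the naturality/exactness in the last step, which is immediate from the object-wise computation of cokernels in the finitely presented functor category. Everything else is a direct citation of \thref{wrath} and the properties of $D$ summarised just before the statement.
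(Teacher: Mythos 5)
Your proof is correct and follows essentially the same route as the paper: the paper's proof is precisely the combination of \thref{wrath} (at $M=R$) with $d(\phi/\psi)\cong D\psi/D\phi$. Your extra step identifying $(D\psi/D\phi)R$ with $(D\psi)R/(D\phi)R$ via object-wise computation of cokernels is left implicit in the paper but is a fine thing to spell out.
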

\begin{proof}
This follows directly from the fact that $d(\phi/\psi)\cong D\psi/D\phi$ and \thref{wrath}.
\end{proof}
\begin{cor}\thlabel{calculus}For any pp-pair $\phi/\psi$ for left $R$-modules, $\phi R=\psi R$ if and only if $D\phi I=D\psi I$ for any injective $I\in\Rmods R$.
\end{cor}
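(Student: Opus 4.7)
The plan is to show that both sides of the claimed equivalence reduce to the vanishing of the single object $w(D\psi/D\phi)$.

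Since $\psi\leqslant\phi$ in $\pp^n(R)$, elementary duality reverses the inequality to give $D\phi\leqslant D\psi$ in $\pp^n(R^\op)$, so $D\psi/D\phi$ is a genuine pp-pair for right $R$-modules. First I would translate the right-hand side: the identity $D\phi I=D\psi I$ for every injective right $R$-module $I$ is equivalent to $(D\psi/D\phi)I=0$ for every such $I$, which is in turn equivalent to $D\psi/D\phi\in\fp_0(\Rmods R,\Ab)$ (applying the characterisation of $\fp_0$ from the start of Section 5 symmetrically to right modules). Since $\fp_0$ is precisely the kernel of the defect $w$, this holds if and only if $w(D\psi/D\phi)=0$.

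Next, I would translate the left-hand side. Applying \thref{kookoo} symmetrically to the ring $R^\op$ and to the pp-pair $D\psi/D\phi$, together with the involutivity $D^2=1$ of elementary duality, yields an isomorphism of right $R$-modules
\[w(D\psi/D\phi)\cong D(D\phi)R/D(D\psi)R = \phi R/\psi R.\]
Since $\psi\leqslant\phi$ implies $\psi R\subseteq\phi R$, this quotient vanishes if and only if $\phi R=\psi R$. Combining the two reductions gives the required equivalence.

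The only point requiring care is confirming that \thref{kookoo}, \thref{wrath}, and the description of $\fp_0$ as the functors vanishing on injectives all transpose verbatim to right $R$-modules; this is immediate by the left–right symmetry of the constructions and presents no real obstacle.
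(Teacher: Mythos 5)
Your proposal is correct and follows the paper's own argument: apply \thref{kookoo} to the right-module pp-pair $D\psi/D\phi$, use $D^2=1$ to identify $w(D\psi/D\phi)$ with $\phi R/\psi R$, and note that $D\psi/D\phi$ vanishes on injectives exactly when it lies in $\fp_0=\Ker(w)$. The extra care you take with the left--right transposition and with $\psi R\subseteq\phi R$ is fine but not a different method.
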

\begin{proof}By \thref{kookoo},
\begin{displaymath}
w(D\psi/D\phi)\cong (D^2\phi)R/(D^2\psi)R=\phi R/\psi R,
\end{displaymath}
and $D\psi/D\phi$ vanishes on injectives if and only if $w(D\psi/D\phi)=0$. This completes the proof.
\end{proof}
Now we are in a position to dualise our results in the previous section.
\begin{thm}Suppose $R$ is an artin algebra. Let $\phi/\psi$ be a pp-pair where $\phi$ and $\psi$ are pp-$n$-formulas. There is a largest pp-$n$-formula $\mu$ such that $\psi\leqslant \mu\leqslant\phi$ and $\psi R=\mu R$.
\end{thm}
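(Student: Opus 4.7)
The plan is to deduce this from \thref{artres} by applying the elementary duality $D$ to flip the situation into one where $R$ and the role of $\psi$ are replaced by injective right $R$-modules and $D\psi$ respectively. Recall that $D$ is an order-reversing isomorphism $(\pp^n(R))^\op\to\pp^n(R^\op)$ with $D^2=1$, and that \thref{calculus} provides the key translation: for any pp-pair $\alpha/\beta$ of pp-$n$-formulas for left $R$-modules, $\alpha R=\beta R$ if and only if $D\alpha I=D\beta I$ for every injective $I\in\Rmods R$.

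First, I would rewrite the desired condition using $D$. Given $\psi\leqslant\mu\leqslant\phi$, the constraint $\psi R=\mu R$ is, by \thref{calculus} applied to the pair $\mu/\psi$, equivalent to $D\psi J=D\mu J$ for every injective right $R$-module $J$. Since $D$ reverses the order, the inclusions $\psi\leqslant\mu\leqslant\phi$ become $D\phi\leqslant D\mu\leqslant D\psi$. Thus ``the largest pp-$n$-formula $\mu$ with $\psi\leqslant\mu\leqslant\phi$ and $\psi R=\mu R$'' is, under $D$, precisely ``the smallest pp-$n$-formula (for right $R$-modules) $\rho$ with $D\phi\leqslant\rho\leqslant D\psi$ that agrees with $D\psi$ on every injective right $R$-module''.

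Next, since $R$ is an artin algebra, so is $R^\op$, so \thref{artres} applies to pp formulas for right $R$-modules. Applying it to the pp-pair $D\psi/D\phi$ (the roles of $\phi$ and $\psi$ in \thref{artres} are here played by $D\psi$ and $D\phi$) yields a smallest pp-$n$-formula $\rho\in\pp^n(R^\op)$ with $D\phi\leqslant\rho\leqslant D\psi$ and $D\psi I=\rho I$ for every injective right $R$-module $I$. I would then set $\mu=D\rho$; by the properties of $D$ listed in the excerpt ($D^2=1$, order reversal, and the equivalence above), $\mu$ satisfies $\psi\leqslant\mu\leqslant\phi$ and $\psi R=\mu R$, and it is the largest such formula since any competitor $\mu'$ would yield $D\mu'$ satisfying the conditions on $\rho$, forcing $\rho\leqslant D\mu'$, hence $\mu'\leqslant\mu$.

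I do not expect a serious obstacle here: the content is entirely in the translation via $D$, and the only thing to double check is that the hypotheses of \thref{artres} genuinely transfer to $R^\op$, which follows from the symmetry of the artin algebra assumption. The main thing to be careful about is the order reversal, so that ``largest'' on the left-module side correctly becomes ``smallest'' on the right-module side, and that the agreement condition on $R$ is translated by \thref{calculus} to an agreement condition on injectives on the dual side, not on $R^\op$.
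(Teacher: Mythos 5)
Your proposal is correct and follows essentially the same route as the paper: the paper's proof is exactly to apply \thref{artres} to the dual pp-pair $D\psi/D\phi$ over the artin algebra $R^\op$ and translate back through \thref{calculus}, which is what you spell out (including the order reversal and the $\mu=D\rho$ step). No gaps; your write-up just makes explicit the bookkeeping the paper leaves implicit.
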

\begin{proof}
Follows from \thref{calculus}, by applying \thref{artres} to $D\psi/D\phi$.
\end{proof}
\begin{thm}Suppose $R$ is a left coherent ring. Let $\phi/\psi$ be a pp-pair for left $R$-modules, where $\phi$ and $\psi$ are pp-$n$-formulas. There is a smallest pp-$n$-formula $\nu$ such that $\psi\leqslant \nu\leqslant\phi$ and $\phi R=\nu R$. 
\end{thm}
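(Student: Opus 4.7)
The plan is to dualise \thref{cohringres} via the elementary duality $D$ and translate the resulting condition using \thref{calculus}, exactly mirroring the way the preceding $\mu$-theorem was obtained from \thref{artres}.

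First I would observe that, since $\psi\leqslant\phi$, elementary duality reverses order to give $D\phi\leqslant D\psi$ in $\pp^n(R^\op)$, so $D\psi/D\phi$ is a pp-pair for right $R$-modules. Applying \thref{cohringres} to this pp-pair yields a largest pp-$n$-formula $\sigma\in\pp^n(R^\op)$ with $D\phi\leqslant\sigma\leqslant D\psi$ and $\sigma I=D\phi I$ for every injective right $R$-module $I$. I would then set $\nu\defeq D\sigma\in\pp^n(R)$. Since $D$ is an order-reversing involution, $D\phi\leqslant\sigma\leqslant D\psi$ becomes $\psi=D^2\psi\leqslant D\sigma=\nu\leqslant D^2\phi=\phi$, so $\nu$ is sandwiched correctly.

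Next I would verify $\phi R=\nu R$ by applying \thref{calculus} to the pp-pair $\phi/\nu$: the equality $\phi R=\nu R$ is equivalent to $D\phi I=D\nu I=\sigma I$ for every injective right $R$-module $I$, which is precisely the property that $\sigma$ was produced with. Finally, for minimality, I would take any pp-$n$-formula $\nu'$ satisfying $\psi\leqslant\nu'\leqslant\phi$ and $\phi R=\nu' R$. Dualising again gives $D\phi\leqslant D\nu'\leqslant D\psi$, and \thref{calculus} applied to $\phi/\nu'$ shows $D\nu' I=D\phi I$ on all injective right $R$-modules. The maximality of $\sigma$ then forces $D\nu'\leqslant\sigma$, whence $\nu=D\sigma\leqslant D^2\nu'=\nu'$.

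No step involves serious work beyond the bookkeeping of the duality $D$; the proof is a purely formal translation. The only minor wrinkle to confirm is that \thref{cohringres} may be invoked for right $R$-modules, i.e.\ for left $R^\op$-modules, which requires the appropriate coherence of $R^\op$; this is the analogue of the two-sided noetherian property of an artin algebra used in the proof of the $\mu$-theorem.
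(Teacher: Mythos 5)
Your argument is exactly the paper's: the paper proves this theorem in one line by applying \thref{cohringres} to the dual pp-pair $D\psi/D\phi$ and translating back via \thref{calculus}, which is precisely the dualisation you spell out, including the minimality argument from the maximality of $\sigma$. The wrinkle you flag is genuine rather than cosmetic — invoking \thref{cohringres} for right $R$-modules requires $R^\op$ to be left coherent (i.e.\ $R$ right coherent, via \thref{nicey} on that side), a point the paper's stated hypothesis of left coherence and its one-line proof pass over, whereas in the artin algebra case the analogous two-sided hypothesis holds automatically.
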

\begin{proof}
Follows from \thref{calculus}, by applying \thref{cohringres} to $D\psi/D\phi$.
\end{proof}
Recall that, for any pp-$n$-formula $\phi\in\pp^n(R)$, $D\phi$ is the kernel of a canonical epimorphism $\pi:U^n\to d\phi$ obtained by applying the Auslander-Gruson-Jensen duality to the inclusion $\phi\to U^n$. One can show by diagram chasing that, for any right $R$-module $M$, the epimorphism
\begin{displaymath}
\pi_M:M^n\to (\phi,M\otimes_R-)
\end{displaymath}
is defined by $(\pi_M(\overline a))_N(\overline b)=\overline a\otimes_R\overline b$, for any left $R$-module $N$, where, for tuples $\overline a=(a_1,a_2,\dots,a_n)\in M^n$ and $\overline b=(b_1,b_2,\dots,b_n)\in \phi N$,
\begin{displaymath}
\overline a\otimes_R\overline b\defeq \sum^n_{i=1}a_i\otimes_R b_i.
\end{displaymath}
Therefore, since 
$$(D\phi)M=\Ker(\pi_M)=\{\overline a\in M^n:\overline a\otimes_R\phi N=0\},$$
we obtain the following.
\begin{prop}\thlabel{crazytimes}For any pp-$n$-formula $\phi\in\pp^n(R)$, with free realisation $(C,\overline c)$ and any right $R$-module $M$,
\begin{displaymath}
(D\phi)M=\{\overline a\in M^n:\overline a\otimes_R\overline c=0\}.
\end{displaymath}
\end{prop}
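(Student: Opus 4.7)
The plan is to leverage the computation already carried out immediately before the statement of \thref{crazytimes}, which shows that $(D\phi)M = \Ker(\pi_M)$ where the evaluation of $\pi_M(\overline a)$ at $N$ on a tuple $\overline b \in \phi N$ is precisely $\overline a \otimes_R \overline b$. In particular,
\begin{displaymath}
(D\phi)M = \{\overline a \in M^n : \overline a \otimes_R \overline b = 0 \text{ for every left } R\text{-module } N \text{ and every } \overline b \in \phi N\}.
\end{displaymath}
The task thus reduces to showing that, for a free realisation $(C, \overline c)$ of $\phi$, the single equation $\overline a \otimes_R \overline c = 0$ already forces $\overline a \otimes_R \overline b = 0$ for every such $\overline b$.

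First I would dispose of the easy inclusion: since $\overline c \in \phi C$ by the definition of a free realisation, any $\overline a$ satisfying the universal vanishing condition satisfies in particular $\overline a \otimes_R \overline c = 0$.

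For the reverse inclusion, I would unpack what it means for $(C, \overline c)$ to be a free realisation: $\phi$ is the image of the natural transformation $\alpha : \Hom_R(C, -) \to U^n$ corresponding to $\overline c \in C^n$, and the component $\alpha_N : \Hom_R(C, N) \to N^n$ sends $f$ to $(f(c_1), \ldots, f(c_n))$. Hence every $\overline b \in \phi N$ is of the form $\overline b = (f(c_1), \ldots, f(c_n))$ for some $R$-linear $f : C \to N$. Applying the functoriality of the tensor product, the map $1_M \otimes_R f : M \otimes_R C \to M \otimes_R N$ sends
\begin{displaymath}
\overline a \otimes_R \overline c = \sum_{i=1}^n a_i \otimes_R c_i \longmapsto \sum_{i=1}^n a_i \otimes_R f(c_i) = \overline a \otimes_R \overline b,
\end{displaymath}
so the assumption $\overline a \otimes_R \overline c = 0$ immediately yields $\overline a \otimes_R \overline b = 0$, as required.

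There is no genuine obstacle here; the proof is essentially an unwinding of the definition of a free realisation together with naturality of the tensor product. The only point that needs care is recognising that the universal condition on $\overline a$ is generated, via the natural transformation $\alpha$, by its value on $\overline c$, which is exactly the content of $(C, \overline c)$ being a free realisation.
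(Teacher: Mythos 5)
Your proof is correct and follows essentially the same route as the paper: both reduce membership in $(D\phi)M$ to the universal vanishing condition via $\Ker(\pi_M)$, then observe that every $\overline b\in\phi N$ has the form $f\overline c$ for some $f:C\to N$ (since the natural transformation corresponding to $\overline c$ surjects onto $\phi$), so that $\overline a\otimes_R\overline b=(1_M\otimes_R f)(\overline a\otimes_R\overline c)$ settles both directions. No issues.
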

\begin{proof}
We know that $\overline a\in(D\phi)M$ if and only if, for any left $R$-module $N$, $\overline a\otimes_R\phi N=0$. But since the natural transformation $\Hom_R(C,-)\to\phi$ corresponding to $\overline c$ is an epimorphism, for any $\overline b\in \phi N$, there is a morphism $f:C\to N$ such that $f\overline c=\overline b$, and therefore $\overline a\otimes_R\overline b=(1_M\otimes_R f)(\overline a\otimes_R\overline c)$, so the condition $\overline a\otimes_R\overline c=0$ is necessary and sufficient for $\overline a\in (D\phi)M$.
\end{proof}
\begin{defn}Let $n\in\mathbb{N}$ be given. For any $C\in R\Lmods$ and $\overline c\in C^n$, we write define the left $R$-module
\begin{displaymath}
\Ann(C,\overline c)=\{\overline r\in R^n:\overline r\cdot \overline c=0\}.
\end{displaymath}
\end{defn}
\begin{cor}\thlabel{jambags}For any pp-$n$-formula $\phi$ with free realisation $(C,\overline c)$,
\begin{displaymath}
(D\phi)R=\Ann(C,\overline c).
\end{displaymath}
\end{cor}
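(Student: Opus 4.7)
The plan is to obtain this as a direct specialisation of \thref{crazytimes} to the case $M = R$. By \thref{crazytimes}, $(D\phi)R$ consists of those tuples $\overline a = (a_1,\dots,a_n) \in R^n$ such that $\overline a \otimes_R \overline c = 0$ in $R \otimes_R C$. The goal is then to verify that under the canonical isomorphism $R \otimes_R C \cong C$ (sending $r \otimes_R c$ to $rc$), the condition $\overline a \otimes_R \overline c = 0$ translates exactly to $\overline a \cdot \overline c = \sum_{i=1}^n a_i c_i = 0$, which by definition means $\overline a \in \Ann(C,\overline c)$.

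First I would apply \thref{crazytimes} to get the set-theoretic description of $(D\phi)R$. Next I would unpack the expression $\overline a \otimes_R \overline c = \sum_{i=1}^n a_i \otimes_R c_i \in R \otimes_R C$, observe that this element maps to $\sum_{i=1}^n a_i c_i$ under the standard left-action isomorphism $R \otimes_R C \to C$, and conclude that the two annihilator sets coincide as subsets of $R^n$. Finally I would note that this identification respects the left $R$-module structure, matching the left $R$-module $\Ann(C,\overline c)$ as defined.

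There is no real obstacle here; the only thing worth being careful about is that the isomorphism $R \otimes_R C \cong C$ is an isomorphism of left $R$-modules, so the equality $(D\phi)R = \Ann(C,\overline c)$ holds as left $R$-submodules of $R^n$, consistent with the left $\End_R R \cong R$-module structure appearing in \thref{wrath}. The proof should therefore be just one or two lines, citing \thref{crazytimes} and the canonical identification $R \otimes_R C \cong C$.
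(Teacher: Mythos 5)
Your proposal is correct and matches the paper's argument: the paper likewise deduces the corollary as the $M=R$ case of \thref{crazytimes}, using the canonical identification of the relevant tensor product with the module (the paper phrases it via the isomorphism $R^n\otimes_R C\to C^n$, you via $R\otimes_R C\cong C$, which is the same observation) to translate $\overline a\otimes_R\overline c=0$ into $\overline a\cdot\overline c=0$. Nothing is missing.
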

\begin{proof}
The morphism $R^n\otimes_R C\to C^n:\overline r\otimes_R \overline a\mapsto \overline r\cdot\overline a$ is an isomorphism, so this follows directly from \thref{crazytimes}.
\end{proof}
\begin{prop}\thlabel{prevs}For any pp-pair $\phi/\psi\in\Ell(R)$, where $\phi$ and $\psi$ are pp-$n$-formulas, with free realisations $(C,\overline c)$ and $(D,\overline d)$ respectively,
\begin{displaymath}
w(\phi/\psi)\cong \Ann(D,\overline d)/\Ann(C,\overline c).
\end{displaymath}
\end{prop}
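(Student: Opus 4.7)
The strategy is to combine the two results that have just been established: Corollary \thref{kookoo} identifies the defect of a pp-pair with a quotient of submodules of $R^n$ obtained via the elementary dual, and Corollary \thref{jambags} expresses that submodule concretely in terms of a free realisation. So the proof should be essentially a two-line calculation, and the work lies only in setting up the diagram so that the isomorphisms are compatible.

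First I would invoke \thref{kookoo} to get an isomorphism of left $R$-modules
\begin{displaymath}
w(\phi/\psi)\cong (D\psi)R/(D\phi)R,
\end{displaymath}
where the quotient makes sense because $\psi\leqslant\phi$ implies $D\phi\leqslant D\psi$ and hence $(D\phi)R\subseteq (D\psi)R$ as submodules of $U^n(R)=R^n$.

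Next I would apply \thref{jambags} separately to the pp-$n$-formula $\phi$ with free realisation $(C,\overline c)$ and to the pp-$n$-formula $\psi$ with free realisation $(D,\overline d)$, obtaining
\begin{displaymath}
(D\phi)R=\Ann(C,\overline c)\qquad\text{and}\qquad (D\psi)R=\Ann(D,\overline d).
\end{displaymath}
Substituting these identifications into the isomorphism above yields $w(\phi/\psi)\cong \Ann(D,\overline d)/\Ann(C,\overline c)$, and in particular reconfirms that the containment $\Ann(C,\overline c)\subseteq\Ann(D,\overline d)$ holds (which one could also verify directly: if $\overline r\cdot\overline c=0$ in $C$, then applying any morphism $C\to D'$ shows any tuple in $\psi$-free-realisations of the form coming from $\overline c$ is killed; more cleanly, it just follows from $D\phi\leqslant D\psi$ via \thref{jambags}).

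There is no real obstacle here — the proposition is a corollary, and the only thing one should be careful about is that the isomorphism in \thref{jambags} is natural enough that it identifies the inclusion $(D\phi)R\hookrightarrow(D\psi)R$ coming from $D\phi\leqslant D\psi$ with the inclusion $\Ann(C,\overline c)\hookrightarrow\Ann(D,\overline d)$ of submodules of $R^n$. This is clear from the explicit description in \thref{crazytimes}, since both $\Ann(C,\overline c)$ and $\Ann(D,\overline d)$ are computed inside the same module $R^n=U^n(R)$, and the map $D\phi\to D\psi$ is the inclusion of subfunctors of $U^n$.
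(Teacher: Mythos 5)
Your proposal is correct and is exactly the paper's argument: the paper proves \thref{prevs} by citing \thref{kookoo} and \thref{jambags} directly, just as you do. The extra care you take about the compatibility of the inclusions is a reasonable elaboration but not a different route.
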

\begin{proof}
This follows directly from \thref{kookoo} and \thref{jambags}.
\end{proof}
\begin{prop}Let $n\in\mathbb{N}$ be given, and let $\phi/\psi\in\Ell(R)$ be a pp-pair where $\phi$ and $\psi$ are pp-$n$-formulas. Let free realisations $(C,\overline c)$ and $(D,\overline d)$ of $\phi$ and $\psi$ be given, respectively. Then 
\begin{displaymath}
w(\phi/\psi)\cong \{\overline r\cdot \overline c:\overline r\in R^n\text{ such that }\overline r\cdot\overline d=0\}.
\end{displaymath}
\end{prop}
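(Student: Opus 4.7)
The plan is to derive this from \thref{prevs}, which already provides $w(\phi/\psi)\cong\Ann(D,\overline d)/\Ann(C,\overline c)$. It therefore suffices to exhibit an isomorphism of left $R$-modules between this quotient and the target set $S=\{\overline r\cdot\overline c:\overline r\in R^n,\ \overline r\cdot\overline d=0\}$.

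The key ingredient I would first produce is a morphism $f:C\to D$ in $R\Lmods$ satisfying $f\overline c=\overline d$ componentwise, i.e.\ $(f(c_1),\dots,f(c_n))=(d_1,\dots,d_n)$. Since $(C,\overline c)$ and $(D,\overline d)$ are free realisations, there are epimorphisms $\Hom_R(C,-)\twoheadrightarrow\phi$ and $\Hom_R(D,-)\twoheadrightarrow\psi$. Because $\Hom_R(D,-)$ is projective in $\Ell(R)$, the composition $\Hom_R(D,-)\twoheadrightarrow\psi\hookrightarrow\phi$ lifts through $\Hom_R(C,-)\twoheadrightarrow\phi$, and by the Yoneda lemma this lift is given by precomposition with some $f:C\to D$. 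Tracing $1_D$ through the commuting triangle identifies $f\overline c$ with $\overline d$.

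With $f$ in hand, I would define an $R$-linear map $\alpha:\Ann(D,\overline d)\to C$ by $\alpha(\overline r)=\overline r\cdot\overline c$. Its image is $S$ by definition. For the kernel, the identity $\overline r\cdot\overline d=\overline r\cdot f\overline c=f(\overline r\cdot\overline c)$ shows in particular that $\Ann(C,\overline c)\subseteq\Ann(D,\overline d)$, whence $\Ker\alpha=\Ann(D,\overline d)\cap\Ann(C,\overline c)=\Ann(C,\overline c)$. The first isomorphism theorem gives $\Ann(D,\overline d)/\Ann(C,\overline c)\cong S$, and combining with \thref{prevs} yields the result.

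The only step with any real content is the construction of $f$, which rests on the projectivity of representables in $\Ell(R)$; the remaining verifications are routine consequences of the $R$-bilinearity of the pairing $(\overline r,\overline c)\mapsto\overline r\cdot\overline c$.
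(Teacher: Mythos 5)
Your proof is correct and takes essentially the same route as the paper: both reduce the statement to \thref{prevs} together with a morphism $f:C\to D$ satisfying $f\overline c=\overline d$ (which the paper asserts without the projectivity/Yoneda lifting argument you supply), and the inclusion $\Ann(C,\overline c)\subseteq\Ann(D,\overline d)$ then identifies the quotient with the set in question. The only difference is packaging: where you apply the first isomorphism theorem to $\overline r\mapsto\overline r\cdot\overline c$ on $\Ann(D,\overline d)$, the paper obtains the same identification by a snake-lemma diagram exhibiting $w(\phi/\psi)$ as the kernel of the induced map $(\overline c)\to(\overline d)$, which is precisely your image $S$.
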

\begin{proof}
There is a morphism such that $C\to D:\overline c\to\overline d$, and therefore, there is an epimorphism $(\overline c)\to(\overline d)$, where $(\overline c)$ is the submodule of $C$ generated by $\overline c$ and $(\overline d)$ is the submodule of $D$ generated by $\overline d$. By \thref{prevs}, there is a commutative diagram
\begin{displaymath}
\xymatrix{&0\ar[d]&0\ar[d]&0\ar[d]&\\
0\ar[r]&0\ar[r]\ar[d]&0\ar[r]\ar[d]&w(\phi/\psi)\ar[d]&
\\
0\ar[r]&\Ann(D,\overline d)\ar[d]\ar[r]&\Ann(C,\overline c)\ar[d]\ar[r]&w(\phi/\psi)\ar[d]\ar[r]&0
\\0\ar[r]&R^n\ar[d]\ar@{=}[r]&R^n\ar[d]\ar[r]&0\ar[d]\ar[r]&0
\\&(\overline c)\ar[d]\ar[r]&(\overline d)\ar[d]\ar[r]&0\ar[d]\ar[r]&0
\\&0&0&0,&}
\end{displaymath}
with exact rows and columns, and therefore, by the snake lemma, $w(\phi/\psi)$ is the kernel of the morpshism $(\overline c)\to(\overline d):\overline c\mapsto\overline d$, as required.
\end{proof}

\end{document}